\documentclass[a4paper,leqno]{amsart}
\usepackage{mathrsfs}
\usepackage{cite}
\usepackage{amssymb}
\usepackage{latexsym}
\usepackage{amsmath}

\newcommand{\rr}[1]{\mathbf R^{#1}}
\newcommand{\ep}{\varepsilon}
\newcommand{\fy}{\varphi}
\newcommand{\cdo}{\, \cdot \, }
\newcommand{\supp}{\operatorname{supp}}
\newcommand{\ON}{\operatorname{ON}}
\newcommand{\vrum}{\vspace{0.1cm}}

% commands which follow have been changed by E.B.
\newcommand{\scal}[2]{\left\langle #1,#2 \right\rangle}
\newcommand{\nm}[2]{\left\Vert #1  \right\Vert _{#2}}
\newcommand{\abp}[1]{\left\vert #1  \right\vert}
\newcommand{\sets}[2]{\left\{ \, #1\, ;\, #2\,  \right\} }
\newcommand{\eabs}[1]{\left\langle #1 \right\rangle}

% commands which follow have been added by E.B.
\newcommand{\Bignm}[2]{\Bigl\Vert #1  \Bigr\Vert _{#2}}

\DeclareMathOperator{\Op}{\mathsf{Op}}

\setcounter{section}{-1}   %fixar sektionsnumer s? att
                                         %dessa b?rjar med 0.

\numberwithin{equation}{section}          %Detta g?r att man f?r
%                                         %formelnummer av typ
%                                         %(sec.nr).

\newtheorem{thm}{Theorem}
\numberwithin{thm}{section}
\newtheorem{prop}[thm]{Proposition}

\newtheorem{lemma}[thm]{Lemma}
\newtheorem*{tom}{\rubrik}
\newcommand{\rubrik}{}
\theoremstyle{definition}

\newtheorem{defn}[thm]{Definition}

\theoremstyle{remark}

\newtheorem{rem}[thm]{Remark}              %T o m hit  bara allm
                                           %som man bara kopierar
                       %till alla textfiler som
                       %man skriver.

\author{Ernesto Buzano}

\address{Department of Mathematics,
University of Torino, Italy}

\email{ernesto.buzano@unito.it}

\author{Joachim Toft}

\address{Department of Mathematics and Systems Engineering,
V{\"a}xj{\"o} University, Sweden}

\email{joachim.toft@vxu.se}

\title{\textbf {Schatten-von Neumann properties in the Weyl calculus}}

\pagestyle{plain}

\begin{document}

\begin{abstract}
Let $\Op _t(a)$, for $t\in \mathbf R$, be the pseudo-differential operator
$$
f(x) \mapsto (2\pi )^{-n}\iint a((1-t)x+ty,\xi )f(y)e^{i\scal {x-y}\xi}\, dyd\xi
$$
and let $\mathscr I_p$ be the set of  Schatten-von Neumann operators of order $p\in
[1,\infty ]$ on $L^2$. We are especially concerned with the Weyl case
(i.{\,}e. when $t=1/2$). We prove that if $m$ and $g$ are
appropriate metrics and weight functions respectively, $h_g$ is the
Planck's function, $h_g^{k/2}m\in L^p$ for some $k\ge 0$ and $a\in S(m,g)$, then 
$\Op _t(a)\in \mathscr I_p$, iff $a\in L^p$. Consequently, if $0\le \delta
<\rho \le 1$ and $a\in S^r_{\rho ,\delta}$, then $\Op _t(a)$ is bounded
on $L^2$, iff $a\in L^\infty$.
\end{abstract}

\maketitle

%%%%%%%%%%%%%%%%%%%%%%%%%%%%%%%%%%%%
\section{Introduction}\label{sec0}
%%%%%%%%%%%%%%%%%%%%%%%%%%%%%%%%%%%%

\par

The aim of the paper is to continue the discussions in
\cite{BuNi, BuTo,To6} on general continuity and compactness properties
for pseudo-differential operators, especially for Weyl operators, with smooth symbols which belongs to certain H{\"o}rmander classes. We are especially focused on finding
necessary and sufficient conditions on particular symbols in order
for the corresponding pseudo-differential operators should be
Schatten-von Neumann operators of certain degrees. 

\par

If $V$ is a real vector space of finite dimension $n$, $V'$ its dual
space, $t\in \mathbf R$ is fixed and $a\in \mathscr S'(V\times V')$
(we use the same notation for the usual functions and distribution
spaces as in \cite{Ho3}), then the pseudo-differential operator $\Op
_t(a)$ of $a$ is a continuous linear map from $\mathscr S(V)$ to
$\mathscr S'(V)$ defined by
\begin{equation}
\Op_t(a)f(x) = (2\pi )^{-n}\iint _{V\times V'}
a((1-t)x+ty,\xi)f(y)e^{i\scal {x-y}\xi}\, dyd\xi. \label{t-op}
\end{equation}
(In the case when $a$ is not an integrable
function, $\Op _t(a)$ is interpreted as the operator with Schwartz
kernel equal to $(2\pi )^{-n/2}\mathscr F_2^{-1}a((1-t)x+ty,x-y)$,
where $\mathscr F_2U(x,\xi )$ denotes the partial Fourier transform
$\mathscr F$ on $U(x,y)$ with respect to the second variable. Here
$\mathscr F$ is the Fourier transform which takes the form
\begin{equation}\label{fourtrans}
\mathscr Ff(\xi )=\widehat f(\xi ) =(2\pi )^{-n/2}\int f(x)e^{-i\scal
x\xi}\, dx,
\end{equation}
when $f\in \mathscr S(V)$. See also Section 18.5 in \cite{Ho3}.) The operator $\Op _{1/2}(a)$ is  the Weyl operator of $a$, and is denoted by $\Op ^w(a)$. (See \eqref{t-op}$'$ in Section \ref{sec1}.)

\par

A family of symbol classes, which appears in several situations,
concerns $S^r_{\rho ,\delta }(\rr {2n})$, for $r,\rho ,\delta
\in \mathbf R$, which consists of all smooth functions $a$
on $\rr {2n}$ such that
$$
|\partial _x^\alpha \partial _\xi ^\beta a(x,\xi )|\le C_{\alpha
,\beta}\eabs \xi ^{r+|\alpha |\delta -|\beta |\rho }.
$$
Here $\eabs \xi=(1+|\xi|^2)^{1/2}$. By letting $s_{t,\infty}$ be the set
of all $a\in \mathscr S'$ such that the definition of $\Op _t(a)$
extends to a continuous operator on $L^2$, the following is a
consequence of Theorem 18.1.11 and the comments on page 94 in \cite{Ho3}:
\emph{Assume that $0\le \delta \le \rho \le 1$ and $\delta <1$. Then
$S^r_{\rho ,\delta}\subseteq s_{t,\infty}$ if and only if $r\le 0$.} The latter
equivalence can also be formulated as
\begin{equation}
S^r_{\rho ,\delta } \subseteq s_{t,\infty}\quad
\Longleftrightarrow \quad S^r_{\rho ,\delta } \subseteq L^\infty
.\label{Srest}
\end{equation}

\par

A similar property holds for any ``reasonable'' family of symbol
classes. This is a consequence of the investigations in \cite{BeF1,
BeF2, Ho1,  Ho3}. For example, in \cite{Ho1, Ho3},
H{\"o}rmander introduces a family of symbol classes, denoted by
$S(m,g)$, which is parameterized by the weight function $m$ and the
Riemannian metric $g$. (See Section \ref{sec1} for strict definition.)
By choosing $m$ and $g$ in appropriate ways, it
follows that most of those reasonable symbol classes can be obtained,
e.{\,}g. $S^{r}_{\rho ,\delta }$ is obtained in such way. If $m$ and
$g$ are appropriate, then \eqref{Srest} is generalized into:
\begin{equation}
S(m,g)\subseteq s_{t,\infty}\quad
\Longleftrightarrow \quad S(m,g) \subseteq L^\infty
.\tag*{(\ref{Srest})$'$}
\end{equation}
(Here we remark that important contributions for improving
the calculus on $S(m,g)$ can be found in \cite{Bn1,Bn2,BC,BoL}.
For example in \cite{Bn2}, Bony extends parts of the theory to a family
of symbol classes which contains any S(m,g) when $m$ and $g$
are appropriate.)

\par

In \cite{BuNi,To6}, the equivalence \eqref{Srest}$'$ is extended in
such way that it involves Schatten-von Neumann properties. More
precisely, let $s_{t,p}(V\times V')$ be the set of all $a\in \mathscr
S'(V\times V')$ such that $\Op _t(a)$ belongs to $\mathscr I_p$, the set
of Schatten-von Neumann operators of order $p\in [1,\infty ]$ on
$L^2(\rr n)$. (Cf. Section \ref{sec1} for a strict definition of
Schatten-von Neumann classes.) Then  in Theorem 1.1
in \cite{BuNi} equivalence \eqref{Srest}$'$ is generalized into
\begin{equation}
S(m,g)\subseteq s_{t,p}\quad
\Longleftrightarrow \quad S(m,g) \subseteq L^p
.\tag*{(\ref{Srest})$''$}
\end{equation}
provided certain extra conditions are imposed on $g$
comparing to \cite{Ho1,Ho2,Ho3}. In \cite{To6}, Theorem 1.1 in
\cite{BuNi} is improved, in the sense that the equivalence
\eqref{Srest}$''$ still holds without these extra  conditions
on $g$ (cf. Theorem 4.4 in \cite{To6}).

\par

Obviously, \eqref{Srest}$''$ completely characterizes the symbol
classes of the form $S(m,g)$ that are contained in
$s_{t,p}$. Consequently, a complete characterization of operator
classes of the form $\Op _t(S(m,g))$ to be contained in $\mathscr I_p$
follows from \eqref{Srest}$''$. On the other hand, \eqref{Srest}$''$
might give rather poor information about Schatten-von Neumann
properties for a \emph{particular} pseudo-differential operator $\Op
_t(a)$, when $a$ belongs to a fixed but arbitrary symbol class $S(m,g)$. For example, if
$a\in S(m,g)\nsubseteq L^p$, then \eqref{Srest}$''$ does not give any
information whether $\Op _t(a)$ belongs to $\mathscr I_p$ or not.

\par

In this context, Theorem 3.9 in \cite{Ho2} seems to be more adapted to
particular pseudo-differential operators with symbols in $S(m,g)$,
instead of whole classes of such operators. The theorem can be
formulated as:
\begin{equation}\label{indsymbA}
\text{Assume that $h_g^{N/2}m\in L^p$ holds for some $N\ge 0$ and $a\in S(m,g)$,}
\end{equation}
for $p=1$. Then
\begin{equation}\label{indsymb}
a\in L^p \qquad \Longrightarrow \qquad \Op _t(a)\in \mathscr
I_p,
\end{equation}
for $p=1$ and  and $t=1/2$.
Equivalently, if \eqref{indsymbA} holds for $p=1$, then
\begin{equation}\tag*{(\ref{indsymb})$'$}
a\in L^p \qquad \Longrightarrow \qquad a\in s_{t,p},
\end{equation}
for $p=1$ and  and $t=1/2$.
Theorem 3.9 in \cite{Ho2} is extended in \cite{To6}, where it is
proved that if \eqref{indsymbA} holds for some $p\in [1,\infty ]$, then \eqref{indsymb} and
\eqref{indsymb}$'$ hold for arbitrary $p$ and $t$. (Cf. Theorem
4.4$'$ and Remark 6.4 in \cite{To6}.)

\par

In Section \ref{sec2} in the present paper we prove that if
\eqref{indsymbA} holds, then \eqref{indsymb} and \eqref{indsymb}$'$
holds with the oposite implication. Consequently, if
\eqref{indsymbA} holds, then
\begin{equation}\label{indsymb2}
a\in L^p \qquad \Longleftrightarrow \qquad \Op _t(a)\in \mathscr
I_p.
\end{equation}
(See Theorem \ref{corthm12} and Theorem \ref{corthm13}.) Here
we note that a different proof of \eqref{indsymb2} in the case
$p=\infty$ can be found in \cite{BuTo}.

\par

In Section \ref{sec4} we also give some further remarks on embeddings
of the form \eqref{indsymb} in the case $p\in [1,2]$ and $t=1/2$ (the
Weyl case). More precisely, Theorem 3.9 in \cite {Ho2} was generalized
in Proposition 4.5$'$ in \cite{To6} as remarked at the
above. On the other hand, the proof of Theorem 3.9 in \cite{Ho2}
contains some techniques which are not
available in \cite {To6}. In Section \ref{sec4} we
combine these techniques with arguments in harmonic analysis to
prove some stressed estimates of the $s_{t,p}$ norm of compactly
supported elements in $C^N$. (See Lemmas
\ref{bernstein}--\ref{lemma3.8}, which
might be useful in other problems in the future as well.) Thereafter we
combine these estimates with arguments in the
proofs of Theorem 4.4$'$ and Proposition 4.5$'$ in \cite {To6}. These
investigations lead to Theorem \ref{thm3.9}, where slight different
sufficiency conditions on the symbols comparing to Theorem 4.4$'$ and
Proposition
4.5$'$ in \cite {To6} are obtained in order for the corresponding
pseudo-differential operators should be Schatten-von
Neumann operators of certain degrees. Roughly speaking, the main
differences between Proposition 4.5$'$ (or Theorem 4.4$'$) in \cite
{To6} and Theorem \ref{thm3.9} is that less regularity is imposed on
the symbols in Theorem \ref{thm3.9}, while weaker assumptions are
imposed on the parameterizing weight functions in Proposition 4.5$'$
in \cite {To6}.

\par

Finally, in Section \ref{sec3} we apply our results to symbol
classes, which are related to $S^r_{\rho ,\delta }$.

\par

%%%%%%%%%%%%%%%%%%%%%%%%%%%%%%%%%%%%
\section{Preliminaries}\label{sec1}
%%%%%%%%%%%%%%%%%%%%%%%%%%%%%%%%%%%%

\par

In this section we recall some well-known facts which are
needed. After a short review about integration over vector spaces, we
continue with discussing certain facts on symplectic vector
spaces. Thereafter we recall the definition of the symbol classes, and
discuss appropriate conditions for the Riemannian metrics and weight
functions which parameterize these classes.

\subsection{Integration on vector spaces}\rule{0pt}{0pt}

\medspace

In order to formulate our problems in a coordinate invariant way, we
 consider, as in \cite{To3, To5, To6},  
integration of \emph{densities} on a real vector space $V$ of finite
dimension $n$. A \emph{volume form on $V$} is a non-zero mapping
$\mu:\wedge^n V\setminus \{ 0\}\to\mathbf C$ which is positive
homogeneous of order one, i.{\,}e.\ such that
$\mu (t\omega )=|t| \mu (\omega )$, when $t\in \mathbf R\setminus \{
0\}$ and $\omega \in \wedge ^n(V)\setminus \{ 0\}$.
Since $\wedge^n V$ has dimension $1$, the volume form   $\mu$ is
completely determined by $\mu(e_1\wedge \cdots \wedge e_n)$, where
$e_1,\ldots,e_n$ is a basis of $V$.

\par

If we fix a \emph{volume form} $\mu$, it is possible to associate to
each  function $f:V\to \mathbf C$  a density $f\mu$ and define
\begin{equation}
\int_Vf\mu\,dx\equiv \idotsint_{\rr n} f(x_1 e_1+\cdots
+x_ne_n)\mu(e_1\wedge\cdots\wedge e_n) \,dx_1\cdots dx_n,
\label{eqn:6}
\end{equation}
where $e_1,\ldots,e_n$ is \emph{any} basis of $V$ and $x=\sum_{i=1}^n
x_i e_i$. In fact, it is easy to prove that the integral $\int_V
f\mu\,dx$ does not depend  on the choice of the basis $e_1,\ldots,e_n$
of $V$, even though it depends on the volume form $\mu$.

\par

If we  consider only bases $e_1,\ldots, e_n$ for $V$ such that
$$
\mu(e_1\wedge\cdots\wedge e_n)=1,
$$ 
\eqref{eqn:6} assumes the simpler form 
$$
\int_Vf\mu\,dx= \idotsint_{\rr n} f(x_1 e_1+\cdots
+x_ne_n)\,dx_1\cdots dx_n,
$$
and therefore we can omit $\mu$ in the left  hand side, i.{\,}e.
$$
\int_Vf\,dx= \idotsint_{\rr n} f(x_1 e_1+\cdots +x_ne_n)\,dx_1\cdots
dx_n,
$$

\par

Definition \eqref{eqn:6} allows to consider invariant $L^p(V)$
spaces. Since invariant definition of spaces of differentiable
functions like $C_0^\infty (V)$ and $\mathscr S(V)$ is not a problem,
we can also consider the dual spaces of distributions as $\mathscr
D'(V)$ and $\mathscr S'(V)$.

\par

If $f$ and $g$ belong to $\mathscr S(V)$, we consider the pairing
$$
\scal fg\equiv \int _Vfg \mu \,dx,
$$
which extends to the dual pairing between   $\mathscr S(V)$ and
 $\mathscr S'(V)$. 
  We also let 
$$
(f,g)=\scal f{\overline g}
$$
for admissible $f$ and $g$. The extension of $(\cdo ,\cdo )$ from
$\mathscr S(V)$ to $L^2(V)$ is
then the usual scalar product. 

\par

\subsection{Symplectic vector spaces}\label{SympVS}\rule{0pt}{0pt}

\medspace

Next we recall some facts about symplectic vector spaces. A real
vector space $W$ of finite dimension $2n$ is called
\emph{symplectic} if there exists  a
non-degenerate anti-symmetric bilinear form $\sigma$ on $W$, i.{\,}e. 
\begin{gather*}
\sigma (X,Y)=-\sigma (Y,X),\qquad\text{for all $X,Y\in W$,} 
\intertext{and}
\sigma (X,Y)=0,\quad \forall \ Y\in W\qquad \Longrightarrow \qquad X=0. 
\end{gather*}
The form $\sigma$ is called the \emph{symplectic form} of $W$.

\par

A basis $e_1,\dots ,e_n,\ep_1,\dots ,\ep_n$ for $W$ is called
\emph{symplectic} if it satisfies
$$
\sigma (e_j,e_k)=\sigma (\ep_j,\ep_k)=0,\quad 
\sigma (e_j,\ep_k)=-\delta _{jk},
$$
for $j,k=1,\dots ,n$. In some situations we use the notation
$e_{n+1},\dots ,e_{2n}$ for the vectors $\ep _1,\dots ,\ep _n$. Then,
with respect to this basis, $\sigma$ is given by
$$
\sigma (X,Y)=\sum_{j=1}^n (y_j\xi_j - x_j\eta_j),
$$
where
$$
X=\sum_{j=1}^n(x_j e_j+\xi_j \ep_j)\quad\text{and}\quad
Y=\sum_{j=1}^n(y_j e_j+\eta_j \ep_j).
$$ 
We refer to \cite {Ho3} for more facts about symplectic vector
spaces.

\par

In order to have invariant measure and integration on the symplectic
vector space $W$, we choose $\abp{\sigma^{\wedge n}}/n!$ as
\emph{symplectic volume form}. Since
$$
\sigma^{\wedge n}(e_1\wedge\cdots\wedge e_n\wedge \ep_1\wedge
\cdots\wedge \ep_n)=n!
$$
for a symplectic basis $e_1,\ldots,e_n,\ep_1,\ldots ,\ep_n$ (which we
sometimes abreviate as $e_1,\ldots ,\ep _n$), when we integrate  on
$W$, we can omit the symplectic volume form:
\begin{align*}
\int _W a(X)\, dX &=\iint_{\rr {2n}}a(x_1 e_1+\cdots
+x_ne_n+\xi_1\ep_1+\cdots +\xi _n\ep _n)
\, dx d\xi
\\[1ex]
&=\iint_{\rr {2n}}a(x_1 e_1+\cdots +\xi _n\ep _n)\, dx d\xi .
\end{align*}
With this choiche of volume form, the measure of subsets of $W$
coincides with the standard Lebesgue measure:
$$
\abp{U}=\int _W\chi_U\,dX=\iint_{\rr {2n}}\chi_U(x_1 e_1+\cdots +\xi
_n\ep _n)\, dxd\xi ,
$$
where $\chi_U$ is the characteristic function of $U\subseteq W$.

\medspace

The symplectic Fourier transform $\mathscr F_\sigma$ on $\mathscr
S(W)$ is defined by the formula
$$
\mathscr F_\sigma a(X)\equiv \pi ^{-n}\int_W a(Y)e^{2i\sigma (X,Y)}\,
dY,
$$
when $a\in \mathscr S(W)$. Then $\mathscr F_\sigma$ is a homeomorphism
on $\mathscr S(W)$ which extends to a homeomorphism on $\mathscr
S'(W)$, and to a unitary operator on $L^2(W)$. Moreover, 
$(\mathscr F_\sigma )^2$ is the identity operator. 
Also note that $\mathscr
F_\sigma$ is defined without any reference of symplectic
coordinates. 

\par

By straight-forward computations it follows that 
$$
\mathscr F_\sigma (a*b) = \pi ^n\mathscr F_\sigma a\, \mathscr
F_\sigma b,\quad
\mathscr F_\sigma (ab)= \pi ^{-n}\mathscr F_\sigma a *\mathscr
F_\sigma b,
$$
when $a\in \mathscr S'(W)$, $b\in \mathscr S(W)$, and $*$ denotes the
usual convolution. We refer to \cite {Fo, To1, To2, To3} for more
facts about the symplectic Fourier transform.

\par

Next we recall the definition of the Weyl quantization. 
Let $V$ be a real vector space of finite dimension $n$, $V'$ its dual
space and let $W=V\times V'$. The vector space $W$ has a natural
symplectic structure given by the symplectic form
\begin{equation}
\sigma(X,Y)=\scal y \xi- \scal x \eta,  \label{SympForm}
\end{equation}
where
$$
X=(x,\xi)\in V\times V',\quad Y=(y,\eta)\in V\times V',
$$
and $\scal \cdot \cdot $ is the duality pairing between $V$ and $V'$.

\par

\begin{rem}\label{splittedbasis}
Observe that when $W=V\times V'$, and $\sigma$ is defined as in
\eqref{SympForm}, then a symplectic basis for $W$ is given by
\emph{any}  basis $e_1,\ldots,e_n$ for $V\times \{0\}$ together with
its  dual basis $\ep_1,\ldots,\ep_n$ for $\{0\} \times V'$. We call
such a symplectic basis \emph{splitted}. Obviously, there are
symplectic bases which are not splitted.

\par

On the other hand, assume that $W$ is an $n$-dimensional symplectic vector space, $e_1,\dots ,e_n,\ep _1,\dots ,\ep _n$ is a fix symplectic basis, and $V$ and $V'$ are the vector spaces spanned by $e_1,\dots ,e_n$ and $\ep _1,\dots ,\ep _n$ respectively. Then $V'$ is the dual of $V$, with symplectic form as the dual form, and $W$ can be identified with $V\times V'$, in which the symplectic basis $e_1,\dots ,\ep _n$ is splitted.
\end{rem}

\par

The \emph{Weyl quantization $\Op ^w(a)$} of a symbol
$a\in \mathscr S'(W)$ is equal to $\Op _t(a)$ for $t=1/2$ (cf. the introduction). In particula, if $a\in \mathscr S(W)$ and $f\in \mathscr S(V)$, then 
\begin{equation}
\Op ^w(a)f(x) =
(2\pi )^{-n}\iint_{V\times V'} a\bigl((x+y)/2,\xi
\bigr)f(y)e^{i\scal {x-y}\xi}\, dyd\xi,  \tag*{(\ref {t-op})$'$}
\end{equation}
where $f\in \mathscr S(V)$ and the integration is performed with
respect to a splitted symplectic basis for  $W=V\times V'$.

\par

The definition of
$\Op ^w(a)$ extends to each $a\in \mathscr S'(W)$, giving a continuous
operator $\Op ^w(a):\mathscr S(V)\to\mathscr S'(V)$. (See \cite {Ho3,
To1, To2, To3}.) We also note that $\Op ^w(a)=\Op _{1/2}(a)$, when
$\Op _t(a)$ is given by \eqref{t-op}.

\par

\subsection{Operators and symbol classes}\rule{0pt}{0pt}

\medspace

We recall the definition of symbol classes which are
considered. (See \cite {Ho3}.) Assume that $a\in
C^N(W)$, $g$ is an arbitrary Riemannian metric on $W$, and that
$m>0$ is a measurable function on $W$. For each $k=0,\dots ,N$, let
\begin{equation}\label{e1.1}
|a|^g_k(X)=\sup |a^{(k)}(X;Y_1,\dots ,Y_k)|,
\end{equation}
where the supremum is taken over all $Y_1,\dots ,Y_k\in W$ such that
$g_X(Y_j)\le 1$ for  $j=1,\dots ,k$. Also set
\begin{equation}\label{e1.2}
\nm a{m,N}^g\equiv \sum _{k=0}^N\sup _{X\in W} \Bigl ( |a|^g_k(X)/m(X) \Bigr ),
\end{equation}
let $S_N{(m,g)}$ be the set of all $a\in
C^N(W)$ such that $\nm a{m,N}^g<\infty$, and let
\begin{equation*}
S{(m,g)} \equiv \bigcap _{N\ge 0}S_N{(m,g)}.
\end{equation*}

\par

Next we recall some properties for the
metric $g$ on $W$ (cf. \cite {To5, To6}). It follows from Section
18.6 in \cite {Ho3} that for each $X\in W$, there are symplectic
coordinates $Z=\sum_{j=1}^n(z_j e_j+\zeta_j \ep_j)$ which diagonalize
$g_X$, i.{\,}e. $g_X$ takes the form
\begin{equation}\label{e1.0}
g_X(Z) =\sum _{j=1}^n\lambda _j(X)(z_j^2+\zeta _j^2),
\end{equation}
where
\begin{equation}\label{e1.00}
\lambda _1(X)\ge \lambda _2(X)\ge \cdots \ge \lambda _n(X)>0,
\end{equation}
only depend on $g_X$
and are independent of the choice of symplectic coordinates which
diagonalize $g_X$.

\par

The \emph{dual metric} $g^\sigma$
and \emph{Planck's function} $h_g$ with respect to $g$ and the
symplectic form $\sigma$ are defined by
$$
g^\sigma _X(Z)\equiv \sup _{Y\neq 0}\frac {\displaystyle{\sigma
(Y,Z)^2}}{\displaystyle{g_X(Y)}} \quad \text{and}\quad h_g(X)
=\sup _{Z\neq 0} \Big ( \frac {\displaystyle
{g_X(Z)}}{\displaystyle {g_X^\sigma (Z)}}\Big )^{1/2}
$$
respectively. It follows that if \eqref {e1.0}
and \eqref {e1.00} are fulfilled, then $h_g(X)=\lambda _1(X)$ and
\begin{equation}\tag*{(\ref{e1.0})$'$}
g^\sigma _X(Z) =\sum _{j=1}^n\lambda _j(X)^{-1}(z_j^2+\zeta_j^2).
\end{equation}
In most of the applications we have that $h_g(X)\le 1$ everywhere,
i.{\,}e. the \emph{uncertainly principle} holds.

\par

The metric $g$ is called \emph{symplectic} if $g_X=g^\sigma _X$ for
every $X\in W$. It follows that $g$ is symplectic if and only if
$\lambda _1(X)=\cdots =\lambda _n(X)=1$ in \eqref {e1.0}.

\par

We recall that parallel to $g$ and $g^\sigma _{\phantom X}\!$, there is
also a canonical way to assign a corresponding symplectic metric
$g^{{}_0} _{\phantom X}\!$.  (See e.{\,}g. \cite {To6}.) More precisely, let
$Mg=(g+g^\sigma)/2$ and define
$$
g^{{}_0} _{X}=\lim_{k\to\infty}M^k g.
$$
Then $g^{{}_0}$ is a symplectic metric, defined in a symplectically
invariant way and if $Z=\sum_{j=1}^n(z_j e_i+\zeta_j \ep_j)$ are
symplectic coordinates  such that \eqref{e1.0} is fulfilled, then
$$
g^{{}_0} _X(Z) =\sum _{j=1}^n(z_j^2+\zeta_j^2).
$$

\par

The Riemannian metric $g$ on $W$ is called \emph{slowly varying}
if there are positive constants $c$ and $C$ such that
\begin{equation}\label{slowly}
g_X(Y-X)\le c\quad \Longrightarrow  \quad C^{-1}g_Y \le g_X\le Cg_Y.
\end{equation}
More generally, assume that $g$ and $G$ are Riemannian metrics on
$W$. Then $G$ is called \emph{$g$-continuous}, if there are
positive constants $c$ and $C$ such that
\begin{equation}\tag*{(\ref{slowly})$'$}
g_X(Y-X)\le c \quad \Longrightarrow
\quad C^{-1}G_Y \le G_X\le CG_Y.
\end{equation}
By duality it follows that $g$ is slowly varying if and only if
$g^\sigma$ is $g$-continuous, and that \eqref{slowly} is equivalent
to \eqref{slowly}$'$, when $G=g^\sigma$.

\par

A positive function $m$ on $W$ is called \emph{$g$-continuous} if
there are constants $c$ and $C$ such that
\begin{equation}\label{gcont}
g_X(Y-X)\le c \quad \Longrightarrow
\quad C^{-1}m(Y) \le m(X)\le Cm(Y).
\end{equation}

\par

We observe that if $g$ is slowly varying, $N\ge 0$ is an integer and
$m$ is $g$-continuous, then $S_N{(m,g)}$ is a Banach space when the
topology is defined by the norm \eqref{e1.2}. Moreover, $S{(m,g)}$ is
a Frech\'et space under the topology defined by the norms
\eqref{e1.2} for all $N\ge 0$.

\par

The Riemannian metric $g$ on $W$ is called
\emph{$\sigma$-temperate}, if there is a constant $C>0$ and an integer
$N\ge 0$ such that
\begin{equation}\label{e1.3}
g_Y(Z) \le  Cg_X(Z)(1+g^\sigma _Y(X-Y))^N,\quad\text{for all $X,Y,Z\in
W$.}
\end{equation}
We observe that if \eqref{e1.3} holds, then \eqref{e1.3} still holds
after the term $g^\sigma _Y(X-Y)$ is replaced by $g^\sigma
_X(X-Y)$, provided the constants $C$ and $N$ have been replaced by
larger ones if necessary. (See also \cite {Ho3}.)

\par

More generally, if $g$ and $G$ are Riemannian metrics on $W$,
then $G$ is called \emph{$(\sigma,g)$-temperate}, if there is a
constant $C$ and an integer $N\ge 0$ such that
\begin{equation}\tag*{(\ref{e1.3})$'$}
\begin{cases}
G_X(Z) \le  CG_Y(Z)(1+g^\sigma _X(X-Y))^N,
\\[1ex]
G_X(Z) \le  CG_Y(Z)(1+g^\sigma _Y(X-Y))^N,\quad \text{for all}\
X,Y,Z\in W.
\end{cases}
\end{equation}
By duality it follows that $G$ is $(\sigma,g)$-temperate, if and only
if $G^\sigma$ is $(\sigma,g)$-temperate. In particular, $g$ is
$\sigma$-temperate, if and only if $g^\sigma$ is
$(\sigma,g)$-temperate. We also note that if $g$ is
$\sigma$-temperate and one of the inequalities in \eqref{e1.3}$'$
holds, then $G$ is $(\sigma,g)$-temperate.

\par

The weight function $m$ is called $(\sigma,g)$-temperate if
\eqref{e1.3}$'$ holds after $G_X(Z)$ and $G_Y(Z)$ have been replaced
by $m(X)$ and $m(Y)$ respectively.

\medspace

In the following proposition we give examples on important functions
related to the slowly varying metric $g$ and which are symplectically
invariantly defined. Here we set
\begin{equation}\label{varthetadef}
\Lambda _g(X)=\lambda _1(X)\cdots \lambda _n(X),
\end{equation}
when $g_X$ is given by \eqref{e1.0}.

\par

\begin{prop}\label{massinv}
Assume that $g$ is a Riemannian metric on $W$, and that $X\in W$ is
fixed. Also assume that the symplectic coordinates are chosen such
that \eqref{e1.0} holds. Then the following
are true:
\begin{enumerate}
\item $\lambda _j$ for $1\le j\le n$ and $\Lambda _g$ are symplectically
invariantly defined;

\vrum

\item if in addition $g$ is slowly varying, then $\lambda _j$ for
$1\le j\le n$ and $\Lambda _g$ are $g$-continuous;

\vrum

\item if in addition $g$ is $\sigma$-temperate, then $\lambda _j$ for
$1\le j\le n$ and $\Lambda _g$ are $(\sigma,g)$-temperate.
\end{enumerate}
\end{prop}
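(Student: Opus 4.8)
The plan is to reduce all three claims to a single local statement about the eigenvalues $\lambda_1(X) \ge \cdots \ge \lambda_n(X)$ of $g_X$ relative to $\sigma$, and then exploit their variational characterization. The key point is that each $\lambda_j(X)$ admits a symplectically invariant description: writing $g_X^\sigma$ for the dual metric, the ratios between $g_X$ and $g_X^\sigma$ are governed by the $\lambda_j$'s, since in diagonalizing coordinates $g_X(Z) = \sum \lambda_j(z_j^2+\zeta_j^2)$ and $g_X^\sigma(Z) = \sum \lambda_j^{-1}(z_j^2+\zeta_j^2)$ by \eqref{e1.0}$'$. Concretely, I would first record a min-max formula: for $1 \le j \le n$,
\begin{equation*}
\lambda_j(X) = \min_{\substack{L \subseteq W \\ \dim L = 2(n-j+1)}} \ \max_{\substack{Z \in L \\ Z \ne 0}} \Bigl( \frac{g_X(Z)}{g_X^\sigma(Z)} \Bigr)^{1/2},
\end{equation*}
or an analogous one over symplectic subspaces; the product $\Lambda_g(X) = \lambda_1(X)\cdots\lambda_n(X)$ is then the square root of the determinant of $g_X$ with respect to the symplectic volume form, i.e. $\Lambda_g(X)^2 = \det(g_X)/\det(g_X^{{}_0})$ evaluated in any symplectic basis. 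Since $\sigma$, $g_X^\sigma$, and the symplectic volume form are all defined without reference to coordinates, claim (1) follows immediately: the right-hand sides of these formulas make no mention of the chosen symplectic coordinates, hence $\lambda_j$ and $\Lambda_g$ are intrinsic.

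For claim (2), suppose $g$ is slowly varying, so $g_X(Y-X) \le c$ implies $C^{-1} g_Y \le g_X \le C g_Y$ as quadratic forms on $W$. Dualizing reverses the inequalities: $C^{-1} g_Y^\sigma \le g_X^\sigma \le C g_Y^\sigma$. Feeding both pairs of inequalities into the min-max formula for $\lambda_j$ gives $C^{-1}\lambda_j(Y) \le \lambda_j(X) \le C\lambda_j(Y)$ (the constant changes to $C$ rather than $C^2$ only after taking the square root, but any fixed power is harmless for $g$-continuity), and multiplying over $j = 1, \dots, n$ yields the same two-sided bound for $\Lambda_g$ with constant $C^n$. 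This is exactly the $g$-continuity condition \eqref{gcont}. For claim (3), assume in addition that $g$ is $\sigma$-temperate, i.e. \eqref{e1.3} holds; equivalently $g^\sigma$ is $(\sigma,g)$-temperate. Then for all $X, Y, Z$ one has $g_Y(Z) \le C g_X(Z)(1 + g_Y^\sigma(X-Y))^N$ and, by the dual temperance, $g_X^\sigma(Z) \le C g_Y^\sigma(Z)(1+g_Y^\sigma(X-Y))^N$; combining these inside the min-max formula produces the polynomial factor $(1+g_Y^\sigma(X-Y))^{N}$ (possibly with $N$ and $C$ enlarged) controlling $\lambda_j(X)/\lambda_j(Y)$ from above, and symmetrically from below, which is \eqref{e1.3}$'$ for $m = \lambda_j$. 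Taking products again transfers this to $\Lambda_g$.

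The main obstacle is establishing the min-max / determinant formulas cleanly enough that the monotonicity arguments in (2) and (3) go through mechanically. The subtlety is that the $\lambda_j$ are eigenvalues of $g_X$ with respect to $g_X^\sigma$ — or equivalently of the operator $F_X$ defined by $\sigma(Z, F_X W) = g_X(Z,W)$ — and a comparison $g_X \le C g_Y$ does \emph{not} by itself control these relative eigenvalues unless one simultaneously controls $g_X^\sigma$ versus $g_Y^\sigma$; this is why slow variation (which delivers exactly the dual comparison for free) is the right hypothesis and why the argument cannot be localized to $g$ alone. I would therefore spend the bulk of the proof verifying the variational formula for $\lambda_j$ as a Courant–Fischer min-max for the generalized eigenvalue problem $(g_X, g_X^\sigma)$, after which parts (2) and (3) are short corollaries obtained by substituting the slow-variation and temperance inequalities and then multiplying. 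The only remaining care is bookkeeping of constants and the exponent $N$ when passing from the individual $\lambda_j$ to the product $\Lambda_g$, which is routine.
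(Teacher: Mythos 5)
Your proposal takes essentially the same approach as the paper: the paper's proof consists of stating precisely the min-max characterization $\lambda_j(X)=\inf_{W_j}\bigl(\sup_{Y\in W_j\setminus 0}(g_X(Y)/g_X^\sigma(Y))^{1/2}\bigr)$ (with the infimum over symplectic subspaces of dimension $2(n-j+1)$) and declaring the three assertions to follow immediately, which is precisely your strategy spelled out. The only cosmetic difference is that you minimize over arbitrary $2(n-j+1)$-dimensional subspaces instead of symplectic ones; since the eigenvalues of the pair $(g_X,g_X^\sigma)$ come in doubled pairs $\lambda_1^2,\lambda_1^2,\dots,\lambda_n^2,\lambda_n^2$, both formulations yield the same value, and your subsequent substitution of the slow-variation and temperance inequalities (for both $g$ and $g^\sigma$) into the variational formula is exactly the intended argument.
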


\par

\begin{proof}
The assertion follows immediately from the fact that
$$
\lambda _j(X)=\inf_{W_j} \Bigl (\sup _{Y\in W_j\setminus 0}\Big ( \frac
{g_X(Y)}{g_X^\sigma (Y)}\Big )^{1/2}\Bigr ),
$$
where the infimum is taken over all symplectic subspaces $W_j$ of
$W$ of dimension $2(n-j+1)$.
\end{proof}

\par

We note that an alternative proof of (1) in Proposition \ref{massinv}
can be found in Section 18.5 in \cite {Ho3}.

\medspace

The following definition is motivated by the general theory of Weyl
calculus. (See Section 18.4--18.6 in \cite {Ho3}.)

\par

\begin{defn}\label{feasible}
Assume that $g$ is a Riemannian metric on $W$. Then
$g$ is called
\begin{enumerate}
\item[(i)] \emph{feasible} if $g$ is slowly varying and $h_g\le 1$
everywhere;

\vrum

\item [(ii)] \emph{strongly feasible} if $g$ is  feasible
and $\sigma$-temperate.
\end{enumerate}
\end{defn}

\par

Note that feasible and strongly feasible metrics are not standard
terminology. In the literature it is common to use the term
``H{\"o}rmander metric'' or ``admissible metric'' instead of
``strongly feasible'' for metrics which satisfy (ii) in Definition
\ref{feasible}. (See \cite {Bn1, Bn2, BC, BoL, BuNi}.) An important
reason for us to follow \cite {To5, To6} concerning this terminology
is that we permit metrics which are not admissible in the sense of
\cite{Bn1, Bn2, BC, BoL, BuNi}, and that we prefer similar names for
metrics which satisfy (i) or (ii) in Definition \ref{feasible}.

\par

\begin{rem}\label{feasrem}
We note that if $g$ is strongly feasible, then $g^{{}_0}$
is strongly feasible, and $g$ and $h_g^{-s}g$ are
$(\sigma,g^{{}_0})$-temperate when $0\le s \le
1$ (cf. \cite{To6}). In particular, $h_g^{-s }g$ is strongly
feasible which is also an immediate consequence of Proposition 18.5.6
in \cite {Ho3}.
\end{rem}

\par

\begin{rem}\label{remfeasible}
Assume that $g$ is slowly varying on $W$ and let $c$ be the same as in
\eqref{slowly}. Then it follows from Theorem 1.4.10 in \cite {Ho3}
that there is a constant $\ep >0$, an integer $N\ge
0$ and a countable sequence $\{X_j\} _{j\in \mathbf N}$ in $W$ such
that the following is true:
\begin{enumerate}
\item there is a positive number $\ep$  such that
$g_{X_j}(X_j-X_k)\ge \ep$ for every $j,k\in \mathbf N$ such that
$j\neq k$;

\vrum

\item $W =\bigcup _{j\in \mathbf N}U_j$, where $U_j$ is the
$g_{X_j}$-ball $\sets {X}{g_{X_j}(X-X_j)< c}$;

\vrum

\item the intersection of more than $N$ balls $U_j$ is empty.
\end{enumerate}
\end{rem}

\par

\begin{rem}\label{remsetfeasible}
It follows from Section 1.4 and Section 18.4 in \cite{Ho3} that if $g$
is a slowly varying metric on $W$, and (1)--(3)
in Remark \ref{remfeasible} holds, then there is a
sequence $\{ \fy _j\}_{j\in \mathbf N}$ in $C^\infty _0(W)$ such that
the following is true:
\begin{enumerate}
\item $0\le \fy _j\in C_0^\infty (U_j)$ for every $j\in \mathbf N$;

\vrum

\item $\sup _{j\in \mathbf N}\nm {\fy _j}{1,N}^{g_{X_j}}<\infty$ for every
integer $N\ge 0$ (i.{\,}e. $\{ \fy _j\}_{j\in \mathbf N}$ is a bounded
sequence in $S(1,g)$);

\vrum

\item $\sum _{j\in \mathbf N}\fy _j=1$ on $W$.
\end{enumerate}
\end{rem}

\subsection{Schatten-von Neumann operators}\rule{0pt}{0pt}

\medspace

Next we recall some facts about Schatten-von Neumann operators.
(see \cite {Si}.) Let $\ON _0(V)$ be the set of all finite
orthonormal sequences $\{f_j \}_{j\in J}$ in $L^2(V)$ such that
$f_j\in \mathscr S(V)$ for every $j\in J$.
Then the linear operator $T$ from $\mathscr S(V)$ to $\mathscr S'(V)$
is called a Schatten-von Neumann operator of order $p\in [1,\infty ]$
(on $L^2(V)$), if
\begin{equation}\label{schattennorm}
\nm T{\mathscr I_p}\equiv \sup \Big (\sum _{j\in J}|(Tf_j,g_j)|^p\Big
)^{1/p}<\infty,
\end{equation}
 where the supremum is taken over all sequences $\{ f_j\}
_{j\in J}$ and $\{ g_j\} _{j\in J}$ in $\ON _0(V)$. 
The set of Schatten-von Neumann operators of order $p$ is denoted by
$\mathscr I_p$. Then $\mathscr I_p$ is a Banach space under the norm
$\nm \cdo {\mathscr I_p}$, and
$\mathscr I_1$, $\mathscr I_2$ and $\mathscr I_\infty$ are the spaces of
trace-class, Hilbert-Schmidt, and continuous
operators on $L^2(V)$ respectively. Moreover, $\mathscr I_p$ increases
with $p$, $\nm {\cdot} {\mathscr I_p}$ decreases with $p$, and if
$T\in \mathscr I_p$ for $p<\infty$, then $T$ is compact on
$L^2(V)$. We refer to \cite {Si} for more facts about Schatten-von
Neumann spaces.

\medspace

For each $p\in [1,\infty ]$ and $t\in \mathbf R$, we let $s^{t,p}_p(W)$ be the set of all $a\in
\mathscr S'(W)$ such that $\Op _t(a) \in \mathscr I_p$. We also let
$s_{t,\sharp} (W)$ be the subspace of $s_{t,\infty} (W)$ consisting of all
$a$ such that $\Op _t(a)$ is compact on $L^2(V)$. The spaces $s_{t,p}(W)$
and $s_{t,\sharp} (W)$ are equipped by the norms $\nm a{s_{t,p}}\equiv \nm
{\Op _t(a)}{\mathscr I_p}$ and $\nm {\cdot}{s_{t,\infty}}$ respectively.
It follows that the map $a\mapsto \Op _t(a)$ is an isometric homeomorphism
from $s_{t,p}(W)$ to $\mathscr I_p$, for every $p\in
[1,\infty ]$ (see \cite {To1, To2, To3}). Since the Weyl case is particularily interesting we also use the notation $s^w_p$ and $s^w_\sharp$ instead of $s_{t,p}$ and $s_{t,\sharp}$ when $t=1/2$.

\par

In the following propositions, we recall some facts for the
$s^w_p$-spaces. The proofs are omitted since the results are
restatements of certain results in \cite {To1, To2, To3}. Here
and in what follows, $p'\in [1,\infty ]$ denotes the conjugate
exponent of $p\in [1,\infty ]$, i.{\,}e. $1/p+1/p'=1$. 
We also use the notation $L^\infty _0(W)$ for the set of all $a\in
L^\infty (W)$ such that
$$
\lim _{R\to \infty}\Big (\underset {|X|\ge R} {\operatorname
{ess{\,}sup}} |a(X)|\Big ) =0,
$$
where $|\cdot|$ is any  euclidean norm on $W$. We refer to \cite {To4} for more
facts about the $s_{t,p}$ spaces for general $t\in \mathbf R$. 

\par

\begin{prop}\label{p1.57A}
Assume that $p,p_1,p_2\in [1,\infty]$ are such that $p_1 \le
p_2<\infty$. Then $s^w_p(W)$ and $s^w_\sharp(W)$ are Banach spaces
with continuos embeddings
$$
\mathscr S(W)\hookrightarrow s^w_{p_1}(W)\hookrightarrow
s^w_{p_2}(W)\hookrightarrow s^w_\sharp (W)\hookrightarrow s^w_\infty
(W)\hookrightarrow \mathscr S'(W).
$$
Moreover, $s_2^w(W)=L^2(W)$. 

\par

If
$a\in \mathscr S'(W)$ and $T$ is an affine symplectic map, then
$\mathscr F_\sigma$ and the pullback $T^*$ are homeomorphisms on
$s_p^w(W)$ and on $s_\sharp ^w(W)$, and
\begin{alignat*}{2}
&\nm a{s^w_{p_2}} \le \nm a{s^w_{p_1}}, &\quad &
\nm a{s^w_p}=\nm {T^*a}{s^w_p}=\nm {\mathscr F_\sigma a}{s^w_p},
\\[1ex]
&\nm a{L^\infty} \le 2^{n}\nm a{s^w_1}, &\quad & \nm a{s^w_2} =
(2\pi )^{-n/2}\nm a{L^2} \text .
\end{alignat*}
\end{prop}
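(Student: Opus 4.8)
\emph{Strategy.} The proposition is a compendium of standard facts, and the plan is to derive all of them from properties of the Schatten--von Neumann classes $\mathscr I_p$ by transporting these along the isometric homeomorphisms $\Op ^w$ from $s^w_p(W)$ onto $\mathscr I_p$ and from $s^w_\sharp (W)$ onto the ideal of compact operators on $L^2(V)$ recalled above, supplemented by two direct computations (one at the Hilbert--Schmidt level, one yielding the $L^\infty$ bound). First, since $\mathscr I_p$ is a Banach space and the compact operators form a closed subspace of $\mathscr I_\infty$, the spaces $s^w_p(W)$ and $s^w_\sharp (W)$ are Banach, the latter being the closed subspace of $s^w_\infty (W)$ consisting of those $a$ for which $\Op ^w(a)$ is compact. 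Next, the classical inclusions $\mathscr I_{p_1}\subseteq \mathscr I_{p_2}$ with $\nm T{\mathscr I_{p_2}}\le \nm T{\mathscr I_{p_1}}$ for $p_1\le p_2$, together with the facts that operators in $\mathscr I_p$ ($p<\infty$) are compact and that compact operators lie in $\mathscr I_\infty$, transport to the middle of the asserted chain and to the monotonicity $\nm a{s^w_{p_2}}\le \nm a{s^w_{p_1}}$.

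\emph{The two end inclusions and the Hilbert--Schmidt level.} For $\mathscr S(W)\hookrightarrow s^w_1(W)$ I would use that the Schwartz kernel of $\Op ^w(a)$ equals $(2\pi )^{-n/2}\mathscr F_2^{-1}a((x+y)/2,x-y)$, that $a\mapsto$ this kernel is continuous from $\mathscr S(W)$ into $\mathscr S(V\times V)$, and that an operator with Schwartz kernel is trace class with $\mathscr I_1$-norm dominated by a fixed seminorm of the kernel; composing with the monotone chain gives $\mathscr S(W)\hookrightarrow s^w_1(W)\hookrightarrow s^w_{p_1}(W)$. For $s^w_\infty (W)\hookrightarrow \mathscr S'(W)$ I would use the Weyl pairing $(\Op ^w(a)f,g)=c_n\scal a{W_{g,f}}$, valid for $f,g\in \mathscr S(V)$, where $W_{g,f}\in \mathscr S(W)$ is a cross-Wigner distribution; this gives $\abp{\scal a{W_{g,f}}}\le c_n\nm a{s^w_\infty}\nm f{L^2}\nm g{L^2}$, and since finite linear combinations of such cross-Wigner distributions are dense in $\mathscr S(W)$ with coefficients controlled by Schwartz seminorms, one obtains $\abp{\scal a\fy }\le C_\fy \nm a{s^w_\infty}$ for every $\fy \in \mathscr S(W)$, i.e.\ continuity. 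Finally, an operator lies in $\mathscr I_2$ precisely when its kernel lies in $L^2(V\times V)$, with $\mathscr I_2$-norm equal to the $L^2$-norm of the kernel; applying this to $\Op ^w(a)$, the substitution $u=(x+y)/2$, $v=x-y$ (unit Jacobian) together with Plancherel's formula for $\mathscr F_2$ yields $\nm{\Op ^w(a)}{\mathscr I_2}=(2\pi )^{-n/2}\nm a{L^2}$, and since $a\mapsto$ kernel is a bijection of $L^2(W)$ onto $L^2(V\times V)$ this simultaneously gives $s^w_2(W)=L^2(W)$ and $\nm a{s^w_2}=(2\pi )^{-n/2}\nm a{L^2}$.

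\emph{Invariances and the $L^\infty$ bound.} Write an affine symplectic $T$ as $T=\tau _{X_0}\circ T_0$ with $T_0$ linear symplectic and $\tau _{X_0}$ translation by $X_0\in W$; symplectic covariance of the Weyl calculus furnishes a unitary metaplectic operator $U_{T_0}$ on $L^2(V)$ with $\Op ^w(T_0^*a)=U_{T_0}^{-1}\Op ^w(a)U_{T_0}$, while $\Op ^w(\tau _{X_0}^*a)=\rho (X_0)^{-1}\Op ^w(a)\rho (X_0)$ for the unitary Heisenberg translation $\rho (X_0)$, so $T^*$ conjugates $\Op ^w(a)$ by a unitary, which preserves every Schatten norm and compactness; hence $\nm{T^*a}{s^w_p}=\nm a{s^w_p}$, and $T^*$ is an isometric (hence homeomorphic) bijection of $s^w_p(W)$ and of $s^w_\sharp (W)$ with inverse $(T^{-1})^*$. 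A direct kernel computation shows $\Op ^w(\mathscr F_\sigma a)=\Op ^w(a)\,P$, where $P$ is the parity operator $Pf(x)=f(-x)$ on $L^2(V)$; since $P$ is unitary and $(\mathscr F_\sigma )^2=\mathrm{id}$, $\mathscr F_\sigma $ is an isometric involution of $s^w_p(W)$ and of $s^w_\sharp (W)$, whence $\nm{\mathscr F_\sigma a}{s^w_p}=\nm a{s^w_p}$. Finally, $\nm a{L^\infty}\le 2^n\nm a{s^w_1}$ follows from the pointwise recovery $a(X)=2^n\operatorname{tr}\bigl (\Op ^w(a)\,\Pi (X)\bigr )$, with $\Pi (X)$ the unitary Grossmann--Royer reflection centered at $X$, since $\abp{a(X)}\le 2^n\nm{\Op ^w(a)\Pi (X)}{\mathscr I_1}=2^n\nm{\Op ^w(a)}{\mathscr I_1}=2^n\nm a{s^w_1}$.

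\emph{Main obstacle.} No single step is deep in isolation; the real work is in pinning down the clean operator-level identities on which everything hinges — the metaplectic and Heisenberg conjugation formulas, the relation $\Op ^w(\mathscr F_\sigma a)=\Op ^w(a)P$, the Grossmann--Royer recovery formula, and the ``Schwartz kernel implies trace class'' estimate — all of which are available in \cite{To1,To2,To3} and \cite{Ho3}. The single most delicate point is the continuity of the inclusion $s^w_\infty (W)\hookrightarrow \mathscr S'(W)$: it rests on the fact that cross-Wigner transforms of pairs of Schwartz functions span a subspace of $\mathscr S(W)$ rich enough to dominate arbitrary Schwartz seminorms, equivalently that $A\mapsto$ (Weyl symbol of $A$) is continuous from $\mathscr I_\infty$ into $\mathscr S'(W)$.
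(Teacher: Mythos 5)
The paper does not prove this proposition; it explicitly omits the proof, stating that the assertions are restatements of results in the cited works \cite{To1,To2,To3}. So there is no paper proof to compare against, and you are essentially supplying the standard argument those references contain. Your proposal is correct in outline and reaches all the stated conclusions by the expected route: transport the Schatten-ideal structure along the isometry $\Op^w$, compute at the Hilbert--Schmidt level via the kernel formula, and use the three covariance/recovery identities (metaplectic, Heisenberg, Grossmann--Royer) for the invariances and the $L^\infty$ bound.

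One small slip: the identity relating $\mathscr F_\sigma$ to parity is $\Op^w(\mathscr F_\sigma a)=P\,\Op^w(a)$, not $\Op^w(a)\,P$ (check it on $a=\delta_{X_0}$: the symplectic Fourier transform $\mathscr F_\sigma\delta_{X_0}=\pi^{-n}e^{2i\sigma(\cdot,X_0)}$, and $\Op^w$ of this equals $P\,\Op^w(\delta_{X_0})$ but not $\Op^w(\delta_{X_0})\,P$). Since $P$ is unitary, composing on either side preserves every $\mathscr I_p$-norm and compactness, so your conclusion $\nm{\mathscr F_\sigma a}{s^w_p}=\nm a{s^w_p}$ is unaffected; only the displayed operator identity needs the correction. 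For the end inclusion $s^w_\infty(W)\hookrightarrow\mathscr S'(W)$ your argument is fine, but it can be stated more economically by dualizing: for $\fy\in\mathscr S(W)$ one has $\scal a\fy=c_n\operatorname{tr}\bigl(\Op^w(a)\,\Op^w(\check\fy)\bigr)$ with $\check\fy(X)=\fy(-X)$, and $\fy\mapsto\nm{\Op^w(\check\fy)}{\mathscr I_1}$ is a continuous seminorm on $\mathscr S(W)$ (this is the dual formulation of your $\mathscr S\hookrightarrow s^w_1$ step), giving $\abp{\scal a\fy}\le c_n\nm a{s^w_\infty}\nm{\Op^w(\check\fy)}{\mathscr I_1}$ directly, without having to argue density of spans of cross-Wigner distributions.
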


\par

\begin{prop}\label{p1.57B}
Assume that $p\in [1,\infty]$. Then the following is true:
\begin{enumerate}
\item the bilinear form $\scal \cdo \cdo $  on 
$\mathscr S(W)$ and the $L^2$-form $(\cdo ,\cdo )$ on 
$\mathscr S(W)$ extend uniquely to the duality between $s^w_p(W)$ and
$s^w_{p'}(W)$, and for every $a\in s^w_p(W)$ and $b\in s^w_{p'}(W)$ it
holds
\begin{gather*}
\abp {\scal a b}\le \nm a{s^w_p}\nm b{s^w_{p'}},\quad \abp {(a,b)}\le
\nm a{s^w_p}\nm b{s^w_{p'}}
\intertext{and}
\nm a{s^w_p}=\sup \abp {\scal a c}=\sup \abp {(a,c)}
\end{gather*}
where the supremums are taken over all $c\in s^w_{p'}(W)$ such
that $\nm c{s^w_{p'}}\le 1$;

\vrum

\item if $p<\infty$, then the dual space for $s^w_p$ can be identified
with $s^w_{p'}$ through the form $\scal \cdo \cdo $ or $(\cdo ,\cdo )$.
\end{enumerate}
\end{prop}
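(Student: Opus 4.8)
The plan is to transport everything to the operator side through the isometric isomorphism $a\mapsto\Op^w(a)$ from $s^w_p(W)$ onto $\mathscr I_p$ recalled above, and then to quote the corresponding classical duality properties of Schatten--von Neumann classes from \cite{Si}. The link between the two pictures is the trace identity, valid for $a,b\in\mathscr S(W)$,
\begin{gather*}
\operatorname{tr}\bigl(\Op^w(a)\Op^w(b)\bigr)=(2\pi)^{-n}\scal ab,
\\
\operatorname{tr}\bigl(\Op^w(a)\Op^w(b)^*\bigr)=(2\pi)^{-n}(a,b),
\end{gather*}
which I would derive by writing the composition as an integral operator, computing its trace as the integral over the diagonal of the Schwartz kernel, changing variables, and using that the Weyl symbol of $\Op^w(b)^*$ is $\overline b$. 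This is the same computation that underlies the relation $s^w_2(W)=L^2(W)$ with $\nm a{s^w_2}=(2\pi)^{-n/2}\nm a{L^2}$ stated in Proposition \ref{p1.57A}, namely the case $b=\overline a$.

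For part (1) I would first take $a,b\in\mathscr S(W)$: H{\"o}lder's inequality for Schatten classes, $\abp{\operatorname{tr}(ST)}\le\nm S{\mathscr I_p}\nm T{\mathscr I_{p'}}$, combined with the trace identity and the isometry yields $\abp{\scal ab}\le\nm a{s^w_p}\nm b{s^w_{p'}}$ and the analogous bound for $(\cdo,\cdo)$. Since finite rank operators whose components lie in $\mathscr S(V)$ are dense in $\mathscr I_r$ for every $r<\infty$, and such operators are of the form $\Op^w(c)$ with $c\in\mathscr S(W)$, the space $\mathscr S(W)$ is dense in $s^w_r(W)$ for $r<\infty$; hence $\scal\cdo\cdo$ and $(\cdo,\cdo)$ extend by continuity, and uniquely, to bounded forms on $s^w_p(W)\times s^w_{p'}(W)$ --- approximating in the first variable when $p<\infty$, and, for $p=\infty$, fixing $a\in s^w_\infty(W)$ and extending the bounded functional $b\mapsto\scal ab$ from the dense subspace $\mathscr S(W)$ of $s^w_1(W)$. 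Finally the sharp identity $\nm a{s^w_p}=\sup\{\abp{\scal ac};\nm c{s^w_{p'}}\le1\}$ is exactly the pullback under $\Op^w$ of $\nm S{\mathscr I_p}=\sup\{\abp{\operatorname{tr}(SU)};\nm U{\mathscr I_{p'}}\le1\}$, which holds for all $p\in[1,\infty]$ (for $p=\infty$ it is the isometric embedding $\mathscr I_\infty\hookrightarrow(\mathscr I_1)^*$ given by the trace form); the version with $(\cdo,\cdo)$ follows because $U\mapsto U^*$ is an isometric involution on $\mathscr I_{p'}$.

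For part (2), when $p<\infty$ the identification $(\mathscr I_p)^*=\mathscr I_{p'}$ through the trace form $(S,U)\mapsto\operatorname{tr}(SU)$ (respectively $\operatorname{tr}(SU^*)$) is classical \cite{Si}. Pulling it back by the isometric isomorphism $\Op^w$ and using the trace identity, every bounded linear functional on $s^w_p(W)$ is represented, uniquely and isometrically, as $a\mapsto\scal ab$ (respectively $a\mapsto(a,b)$) for some $b\in s^w_{p'}(W)$; this is the asserted identification.

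The only step needing genuine care is the endpoint $p=\infty$, where $\mathscr S(W)$ is not norm-dense in $s^w_\infty(W)$: there the uniqueness of the extended pairings and the norm formula must be argued via density in the $s^w_1$-factor alone, as indicated above, and correspondingly no claim is made about $(s^w_\infty)^*$, consistently with the restriction $p<\infty$ in part (2).
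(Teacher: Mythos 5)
The paper offers no proof of this proposition: it simply states that ``the proofs are omitted since the results are restatements of certain results in \cite{To1, To2, To3}.'' Your reconstruction --- transport everything through the isometric isomorphism $a\mapsto\Op^w(a)$ onto $\mathscr I_p$, establish the trace identity $\operatorname{tr}\bigl(\Op^w(a)\Op^w(b)\bigr)=(2\pi)^{-n}\scal ab$ by integrating the composed Schwartz kernel over the diagonal, invoke H\"older's inequality for Schatten ideals, and then extend by density of $\mathscr S(W)$ in $s^w_r(W)$ for $r<\infty$ (with the endpoint $p=\infty$ handled by density in the $s^w_1$-factor) --- is precisely the standard route, and the individual steps you describe are all correct: in particular the observation that rank-one operators $f\otimes\overline g$ with $f,g\in\mathscr S(V)$ have Weyl symbol in $\mathscr S(W)$ does give the needed density, and the $p=\infty$ caveat is handled properly.

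There is, however, one point you should have caught, because your own computation produces it. From $\operatorname{tr}\bigl(\Op^w(a)\Op^w(b)\bigr)=(2\pi)^{-n}\scal ab$ and $\abp{\operatorname{tr}(ST)}\le\nm S{\mathscr I_p}\nm T{\mathscr I_{p'}}$, the pullback is
$$
\abp{\scal ab}\le(2\pi)^{n}\nm a{s^w_p}\nm b{s^w_{p'}},
\qquad
\nm a{s^w_p}=(2\pi)^{-n}\sup_{\nm c{s^w_{p'}}\le 1}\abp{\scal ac},
$$
with the dimensional constant $(2\pi)^{n}$, not the bare inequality and equality you wrote (and which the proposition in the paper also asserts). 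One can see directly that the constant must be there: combining the present proposition at $p=p'=2$ with $\nm a{s^w_2}=(2\pi)^{-n/2}\nm a{L^2}$ from Proposition \ref{p1.57A} and taking $b=a$ would otherwise force $(2\pi)^n\le 1$. So the statement of the proposition as printed is internally inconsistent with Proposition \ref{p1.57A} by this factor --- most likely inherited from a different normalization of the pairing in \cite{To1,To2,To3} --- and your proof, if written out carefully, actually gives the correct constants rather than the ones asserted. The rest of your argument, including the identification $(\mathscr I_p)^*=\mathscr I_{p'}$ for $p<\infty$ and the isometric embedding $\mathscr I_\infty\hookrightarrow(\mathscr I_1)^*$ for the endpoint, is sound and carries over verbatim once the factor $(2\pi)^{\pm n}$ is kept track of.
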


\par

In what follows we let $B_r(X)$ denote the open ball
with center at $X\in W$ and radius $r$, provided there is no
confusion about the euclidean structure in $W$. For future references
we also set $B(X)=B_1(X)$.

\begin{prop}\label{p1.57C}
Assume that $p\in [1,\infty ]$ and $r\in (1,\infty)$. Then 
$$
s_p^w(W)\cap \mathscr E'(W)
=\mathscr F_\sigma \bigl(L^p(W)\bigr)\cap \mathscr E'(W),
$$
 and for some
constant $C$ which only depends on $r$ and $n$ it holds
\begin{equation}\label{sova}
C^{-1}\nm {\mathscr F_\sigma  a}{L^p}\le \nm a{s_p^w} \le C\nm
{\mathscr F_\sigma  a}{L^p},
\end{equation}
for all $a\in \mathscr E'(B_r(0))$. Here the open ball $B_r(0)$ is
taken with respect to any euclidean metric.
\end{prop}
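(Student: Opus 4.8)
\emph{The plan} is to reduce to symbols supported in a ball of radius one and then combine a Fourier-series (Gabor-type) expansion with the symplectic invariance of Proposition~\ref{p1.57A}, the duality of Proposition~\ref{p1.57B}, and complex interpolation. Since the constant may depend on $r$ and $n$, I would first fix a smooth partition of unity $\sum_j\psi_j=1$ on $W$ with $\psi_j\in C_0^\infty(B(X_j))$, subordinate to a locally finite cover by balls of radius one (cf.\ Remark~\ref{remsetfeasible}). Only finitely many indices, say $j\in F$ with $\#F$ bounded in terms of $r$ and $n$, satisfy $B(X_j)\cap\overline{B_r(0)}\ne\emptyset$, and $\Op^w(a)=\sum_{j\in F}\Op^w(\psi_j a)$ for $a\in\mathscr E'(B_r(0))$; by the triangle inequality in $\mathscr I_p$ and the invariance of $\nm{\cdot}{s_p^w}$ under affine symplectic pullbacks (Proposition~\ref{p1.57A}), it suffices to treat $a\in\mathscr E'(B(0))$. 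Secondly, the lower estimate in \eqref{sova} reduces to the upper one: fixing $\phi\in C_0^\infty(B_2(0))$ with $\phi=1$ on $\overline{B(0)}$, using $L^p$--$L^{p'}$ duality together with Parseval's formula for $\mathscr F_\sigma$, the fact that $\supp a\subseteq B(0)$, the identity $\mathscr F_\sigma(fh)=\pi^{-n}\mathscr F_\sigma f*\mathscr F_\sigma h$, $\mathscr F_\sigma^2=\mathrm{id}$, and Young's inequality, Proposition~\ref{p1.57B} gives
\begin{equation*}
\nm{\mathscr F_\sigma a}{L^p}=\sup_{\nm g{L^{p'}}\le1}\abp{\scal a{\phi\mathscr F_\sigma g}}\le\nm a{s_p^w}\sup_{\nm g{L^{p'}}\le1}\nm{\phi\mathscr F_\sigma g}{s_{p'}^w}\le C\,\nm a{s_p^w},
\end{equation*}
once the upper bound $\nm b{s_{p'}^w}\le C\nm{\mathscr F_\sigma b}{L^{p'}}$ is known for the compactly supported symbol $b=\phi\mathscr F_\sigma g$. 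So it remains to prove $\nm a{s_p^w}\le C\nm{\mathscr F_\sigma a}{L^p}$ for all $a\in\mathscr E'(B(0))$ and all $p\in[1,\infty]$.

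\emph{The upper bound for $1\le p\le2$.} I would expand $a\in\mathscr E'(B(0))$ in a Fourier series on the cube $Q=[-2,2]^{2n}\supseteq\overline{B_2(0)}$: with $E_k(X)=e^{i\pi k\cdot X/2}$ ($k\in\mathbf Z^{2n}$) one has $a=\phi a=\sum_k c_k\,\phi E_k$, where the $c_k$ are the Fourier coefficients of the zero-extension of $a$ and, since $\supp a\subseteq B(0)$, equal $(\mathscr F_\sigma a)(Y_k)$ up to a fixed constant, $\{Y_k\}$ being the lattice with $2\sigma(Y_k,X)=\pi k\cdot X/2$. Each $E_k=e^{2i\sigma(Y_k,\cdot)}$, so $\mathscr F_\sigma(\phi E_k)=(\mathscr F_\sigma\phi)(\cdot+Y_k)$ is a translate of $\mathscr F_\sigma\phi$, whence $\nm{\phi E_k}{s_p^w}=\nm{\mathscr F_\sigma\phi}{s_p^w}=\nm\phi{s_p^w}=:C_\phi<\infty$ by the translation- and $\mathscr F_\sigma$-invariance in Proposition~\ref{p1.57A} and the embedding $\mathscr S(W)\hookrightarrow s_p^w(W)$. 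For $p=1$ the triangle inequality gives $\nm a{s_1^w}\le C_\phi\sum_k\abp{(\mathscr F_\sigma a)(Y_k)}\le C\nm{\mathscr F_\sigma a}{L^1}$, the last step being a Plancherel--P\'olya sampling estimate (valid since $\mathscr F_\sigma a$ has symplectic Fourier transform supported in $B(0)$ and $\{Y_k\}$ is uniformly separated). For $p=2$, Proposition~\ref{p1.57A} and unitarity of $\mathscr F_\sigma$ give $\nm a{s_2^w}=(2\pi)^{-n/2}\nm{\mathscr F_\sigma a}{L^2}$; equivalently, the Gram matrix $\bigl(\scal{\Op^w(\phi E_k)}{\Op^w(\phi E_l)}_{\mathscr I_2}\bigr)=\bigl((2\pi)^{-n}\!\int|\phi|^2E_{k-l}\bigr)$ decays rapidly off the diagonal, so a Schur estimate makes the synthesis map $\{c_k\}\mapsto\Op^w\bigl(\sum_k c_k\phi E_k\bigr)$ bounded $\ell^2\to\mathscr I_2$; it is bounded $\ell^1\to\mathscr I_1$ by the triangle inequality. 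Complex interpolation then gives boundedness $\ell^p\to\mathscr I_p$ for $1\le p\le2$, and combined with the Plancherel--P\'olya bound $\bigl(\sum_k\abp{(\mathscr F_\sigma a)(Y_k)}^p\bigr)^{1/p}\le C\nm{\mathscr F_\sigma a}{L^p}$ this yields $\nm a{s_p^w}\le C\nm{\mathscr F_\sigma a}{L^p}$ for $1\le p\le2$.

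\emph{The upper bound for $2<p\le\infty$, and the main obstacle.} Here I would use trace duality: by Proposition~\ref{p1.57B}, $\nm a{s_p^w}=\sup\{\abp{(a,c)}:\nm c{s_{p'}^w}\le1\}$, and since $a\in\mathscr E'(B(0))$ one may replace $c$ by $\phi c$ without changing $(a,c)$; Parseval for $\mathscr F_\sigma$ and the already-proved equivalence at the exponent $p'\in[1,2)$ (applied to the compactly supported $\phi c$) give
\begin{equation*}
\abp{(a,c)}=\abp{(a,\phi c)}\le C\,\nm{\mathscr F_\sigma a}{L^p}\,\nm{\mathscr F_\sigma(\phi c)}{L^{p'}}\le C'\,\nm{\mathscr F_\sigma a}{L^p}\,\nm{\phi c}{s_{p'}^w}.
\end{equation*}
The step I expect to be the main obstacle is bounding $\nm{\phi c}{s_{p'}^w}$ by $\nm c{s_{p'}^w}$: multiplying a Weyl symbol by a cutoff is not in general harmless for Schatten--von Neumann norms. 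I would resolve this by invoking that $s_{p'}^w$ is a module over $C_0^\infty(W)$ under multiplication---a property that descends from the corresponding module property of the modulation spaces to which the $s_{p'}^w$ are isomorphic (cf.\ \cite{To1,To2,To3}), and which for $p'=2$ is just the boundedness of multiplication by $\phi$ on $L^2=s_2^w$. Alternatively, for $p=\infty$ one can argue directly: decomposing $\mathscr F_\sigma a=\sum_\nu\chi_\nu\mathscr F_\sigma a$ with $\chi_\nu\in C_0^\infty(B(X_\nu))$ a partition of unity on $W$, the piece $a_\nu=\mathscr F_\sigma(\chi_\nu\mathscr F_\sigma a)$ has $\mathscr F_\sigma a_\nu$ supported in $B(X_\nu)$; after demodulation, $e^{-2i\sigma(X_\nu,\cdot)}a_\nu$ is band-limited to the fixed ball $B(0)$ and is the symplectic Fourier transform of an $L^1$-function of norm $\le C\nm{\mathscr F_\sigma a}{L^\infty}$, hence a bounded function all of whose derivatives are $\le C\nm{\mathscr F_\sigma a}{L^\infty}$ by Bernstein's inequality, so $\nm{\Op^w(a_\nu)}{\mathscr I_\infty}\le C\nm{\mathscr F_\sigma a}{L^\infty}$ by the Calder\'on--Vaillancourt theorem together with Proposition~\ref{p1.57A}; an almost-orthogonality argument (the supports of the $\mathscr F_\sigma a_\nu$ having bounded overlap) then assembles these into $\nm a{s_\infty^w}\le C\nm{\mathscr F_\sigma a}{L^\infty}$, and interpolation with $p=2$ covers $2<p<\infty$.

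\emph{Conclusion.} Combining the upper estimates for all $p$ with the duality reduction of the first paragraph yields the lower estimates for all $p$, hence the norm equivalence \eqref{sova} and, in particular, the identity $s_p^w(W)\cap\mathscr E'(W)=\mathscr F_\sigma(L^p(W))\cap\mathscr E'(W)$. For $p<\infty$ one may run the Fourier-series argument first on $C_0^\infty(B_r(0))$, which is dense in $\mathscr F_\sigma(L^p)\cap\mathscr E'(B_r(0))$ in the relevant norm, to make the manipulations rigorous; the case $p=\infty$ is handled by the direct argument just sketched (or by a weak-$*$ density argument).
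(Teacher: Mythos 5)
The paper does not actually prove Proposition~\ref{p1.57C}; the text preceding it says ``The proofs are omitted since the results are restatements of certain results in \cite{To1,To2,To3}.'' So there is no in-paper argument to compare against, and your proposal has to stand on its own.

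The reduction to $a\in\mathscr E'(B(0))$ and the Fourier-series argument for $\nm a{s_p^w}\le C\nm{\mathscr F_\sigma a}{L^p}$ on $1\le p\le2$ are sound (the $\ell^2\to\mathscr I_2$ bound for the synthesis map is a Toeplitz/Schur estimate, as you say, and the Plancherel--P\'olya step is legitimate because $\mathscr F_\sigma(\mathscr F_\sigma a)=a$ is supported in $B(0)$). The reduction of the lower bound at $p$ to the upper bound at $p'$ is also correct. The difficulty is your ``Plan~A'' for the upper bound when $p>2$: the step $\nm{\mathscr F_\sigma(\phi c)}{L^{p'}}\le C\nm{\phi c}{s_{p'}^w}$ is the \emph{lower} estimate at the exponent $p'<2$, which the Fourier-series argument did \emph{not} establish; via your own duality reduction that lower estimate at $p'$ requires the upper estimate at $(p')'=p>2$, i.e.\ precisely what you are trying to prove. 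So Plan~A is circular, and the gap you flagged --- the module bound $\nm{\phi c}{s_{p'}^w}\le C\nm c{s_{p'}^w}$ --- is not actually the obstruction. In fact that module property follows directly from Proposition~\ref{p1.57A}: writing $\phi c=\mathscr F_\sigma\bigl(\pi^{-n}\mathscr F_\sigma\phi*\mathscr F_\sigma c\bigr)$ and using $\mathscr F_\sigma$-invariance, translation invariance, and Minkowski's inequality gives $\nm{\phi c}{s_p^w}\le\pi^{-n}\nm{\mathscr F_\sigma\phi}{L^1}\nm c{s_p^w}$. What actually saves the proof is your alternative ``Plan~B'' route, and even that can be simplified: since $a=\phi a$ forces $\mathscr F_\sigma a=\pi^{-n}\mathscr F_\sigma\phi*\mathscr F_\sigma a$, every derivative of $\mathscr F_\sigma a$ is bounded by $C\nm{\mathscr F_\sigma a}{L^\infty}$, so Calder\'on--Vaillancourt applies directly to $\mathscr F_\sigma a$ and gives $\nm a{s_\infty^w}=\nm{\mathscr F_\sigma a}{s_\infty^w}\le C\nm{\mathscr F_\sigma a}{L^\infty}$ without the partition $\{\chi_\nu\}$ or the almost-orthogonality assembly. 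Interpolating this with the $p=2$ identity yields the upper bound for $2<p<\infty$, and then your duality step produces the lower bound for all $p$, closing the argument. So the logical chain is: upper$[1,2]$ by Fourier series, upper at $\infty$ by CV, upper$(2,\infty)$ by interpolation, lower$[1,\infty]$ by duality. Presented in that order the proof is correct; presented with Plan~A as the primary route it is not.
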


\par

The next proposition concerns interpolation properties. Here and in
what follows we use similar notations as in \cite {BeL} concerning
interpolation spaces.

\par

\begin{prop}\label{interpolprop}
Assume that $p,p_1,p_2\in [1,\infty ]$ and $0\le \theta \le 1$ such
that $1/p=(1-\theta )/p_1+\theta /p_2$. Then the
(complex) interpolation space $(s^w_{p_1},s^w_{p_2})_{[\theta ]}$ is
equal to $s^w_{p}$ with equality in norms.
\end{prop}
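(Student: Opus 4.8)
The plan is to exploit the isometric identification $a\mapsto \Op^w(a)$ between $s^w_p(W)$ and $\mathscr I_p$, which holds for every $p\in[1,\infty]$ as recorded just before Proposition \ref{p1.57A}, and reduce the statement to the corresponding (known) complex interpolation identity for the Schatten classes themselves, namely $(\mathscr I_{p_1},\mathscr I_{p_2})_{[\theta]}=\mathscr I_p$ with equality of norms whenever $1/p=(1-\theta)/p_1+\theta/p_2$. This last fact is classical and can be cited from \cite{Si} (or from the general theory of interpolation of the $\mathscr I_p$ scale, e.g.\ via complex interpolation of the non-commutative $L^p$ spaces over $\mathscr L(L^2)$). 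Since interpolation is a functor on compatible Banach couples, an isometric isomorphism of couples transports complex interpolation spaces isometrically; thus once I know $\Op^w$ is an isometric isomorphism of the couple $(s^w_{p_1},s^w_{p_2})$ onto $(\mathscr I_{p_1},\mathscr I_{p_2})$, the conclusion $(s^w_{p_1},s^w_{p_2})_{[\theta]}=s^w_p$ with equality in norms is immediate.

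First I would check that $(s^w_{p_1},s^w_{p_2})$ is a genuine compatible (interpolation) couple: both spaces embed continuously into the Hausdorff topological vector space $\mathscr S'(W)$ by Proposition \ref{p1.57A}, so the couple is admissible and the complex interpolation space is well defined. Correspondingly $(\mathscr I_{p_1},\mathscr I_{p_2})$ is a compatible couple inside $\mathscr L(\mathscr S(V),\mathscr S'(V))$ (or inside $\mathscr L(L^2)$ with the obvious inclusions). Next I would record that $\Op^w$ maps $s^w_{p_j}$ isometrically onto $\mathscr I_{p_j}$ for $j=1,2$ and that these two isometries are restrictions of one and the same linear bijection $\Op^w:\mathscr S'(W)\to\mathscr L(\mathscr S(V),\mathscr S'(V))$; hence $\Op^w$ and its inverse are morphisms of Banach couples. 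Applying the complex interpolation functor $(\,\cdot\,)_{[\theta]}$ gives an isometric isomorphism $(s^w_{p_1},s^w_{p_2})_{[\theta]}\to(\mathscr I_{p_1},\mathscr I_{p_2})_{[\theta]}$, and the right-hand side equals $\mathscr I_p$ isometrically. Pulling back by $\Op^w$ identifies the left-hand side isometrically with $s^w_p$, which is exactly the claim. As a consistency check, the endpoint $p_1=p_2=2$ together with $s^w_2(W)=L^2(W)$ (with the constant $(2\pi)^{-n/2}$ in Proposition \ref{p1.57A}) reproduces the known self-duality and the Hilbert-space interpolation, and here the normalization constants cancel because they are the same on both sides.

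The main obstacle is not any hard estimate but rather making the interpolation-of-Schatten-classes input precise with the correct normalization, i.e.\ ensuring equality of norms (not merely of spaces up to equivalent norms) in $(\mathscr I_{p_1},\mathscr I_{p_2})_{[\theta]}=\mathscr I_p$. For $\mathscr I_p$ over a separable Hilbert space this is standard; the cleanest route is to realize $\mathscr I_p$ as a non-commutative $L^p$ space and invoke Calder\'on's complex interpolation theorem for $L^p$ spaces (which gives equality of norms), or to argue directly with the three-lines lemma applied to the analytic family built from the singular-value decomposition. A secondary point to watch is that the couple $(\mathscr I_{p_1},\mathscr I_{p_2})$ should be treated with its intrinsic norms and not be contaminated by the ambient $\mathscr L(L^2)$ norm; since $\mathscr I_{p_1}\cap\mathscr I_{p_2}=\mathscr I_{\min(p_1,p_2)}$ and $\mathscr I_{p_1}+\mathscr I_{p_2}=\mathscr I_{\max(p_1,p_2)}$ isometrically (with the natural norms on sum and intersection), this causes no difficulty, but it should be stated. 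Everything else is a formal consequence of functoriality of complex interpolation under isometric isomorphisms of couples.
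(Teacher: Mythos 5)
The paper gives no proof of this proposition; it states just before Proposition~\ref{p1.57A} that the proofs of these propositions ``are omitted since the results are restatements of certain results in \cite{To1, To2, To3}.'' So there is no in-paper argument to compare against; what you have done is supply a self-contained justification, and your argument is sound. The route---regard $\Op^w$ as a single linear bijection from $\mathscr S'(W)$ onto the space of continuous maps $\mathscr S(V)\to\mathscr S'(V)$, observe that it restricts to isometries $s^w_{p_j}\to\mathscr I_{p_j}$ for $j=1,2$, hence is an isometric morphism of Banach couples in both directions, and then transport the classical identity $(\mathscr I_{p_1},\mathscr I_{p_2})_{[\theta]}=\mathscr I_p$ (with equality of norms) through the functoriality of complex interpolation---is exactly the natural way to reduce the statement to known operator-ideal interpolation, and it does yield equality of norms rather than mere equivalence. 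Your verification that the couples are admissible (both spaces embed continuously into $\mathscr S'(W)$, respectively into $\mathscr L(\mathscr S(V),\mathscr S'(V))$) is the right thing to check before invoking functoriality. One small blemish: your ``consistency check'' at $p_1=p_2=2$ is vacuous, since then $p=2$ for every $\theta$ and the interpolation functor returns the space unchanged; a genuine sanity check would be $p_1=1$, $p_2=\infty$, $\theta=1/2$, recovering $s^w_2=L^2$ with the $(2\pi)^{-n/2}$ normalization. Also, for the endpoint couple $(\mathscr I_1,\mathscr I_\infty)$ one should be aware that the lower complex method $[\theta]$ does produce the full $\mathscr I_p$ (this is where the non-commutative Calder\'on theorem or an explicit three-lines argument with the singular-value decomposition is needed), but you flag exactly this point, so the proposal is complete modulo citing the precise reference.
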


\par

%%%%%%%%%%%%%%%%%%%%%%%%%%%%%%%%%%%%%%%%%%%%%%%%%%%%%%%%%%%%%%%%%%%%%
\section{Necessary and sufficient conditions for symbols to define 
Schatten-von Neumann operators}\label{sec2}
%%%%%%%%%%%%%%%%%%%%%%%%%%%%%%%%%%%%%%%%%%%%%%%%%%%%%%%%%%%%%%%%%%%%%

\par

\subsection{Necessary and sufficient conditions for symbols in the
Weyl calculus}\label{subsec21}\rule{0pt}{0pt}

\medspace

In this subsection we continue the discussion from \cite{BuNi, To6}
concerning Schatten-von Neumann properties for pseudo-differential
operators. 
We discuss necessity for symbols in $S(m,g)$ in order
to the corresponding Weyl operators should be Schatten-von Neumann
operators of certain degrees. We essentially prove that
the sufficiency results in Section 6 in \cite {To6} are to some extent
also necessary. More precisely we have the following result.

\par

\begin{thm}\label{corthm12}
Assume that $p\in [1,\infty ]$, $g$ is strongly feasible,
$m$ is $g$-continuous, and that $a\in S(m,g)$. Then the following is
true:
\begin{enumerate}
\item if $h_g^{N/2}m\in L^p(W)$ for some $N\ge 0$, then $a\in
s_p^w(W)$ if and only if $a\in L^p(W)$;

\vrum

\item if $h_g^{N/2}m\in L^\infty _0(W)$ for some $N\ge 0$, then $a\in
s_\sharp ^w(W)$ if and only if $a\in L^\infty _0(W)$.
\end{enumerate}
\end{thm}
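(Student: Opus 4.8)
The plan is to reduce both statements to a localized estimate comparing the $s^w_p$-norm and the $L^p$-norm on balls of bounded euclidean size, via the partition of unity associated with the slowly varying metric $g$. Concretely, I would fix the countable sequence $\{X_j\}$ and the functions $\{\fy_j\}$ from Remarks \ref{remfeasible}--\ref{remsetfeasible}, so that $a=\sum_j \fy_j a$ with $\fy_j a\in S(m,g)$ supported in the $g_{X_j}$-ball $U_j$. After pulling back by the affine symplectic map $T_j$ that sends $U_j$ to a ball of fixed euclidean radius $r$ (and rescales $g_{X_j}$ to a multiple of the euclidean metric with the scaling governed by the eigenvalues $\lambda_1(X_j),\dots,\lambda_n(X_j)$, hence by $h_g(X_j)$ and $\Lambda_g(X_j)$), Proposition \ref{p1.57C} applies to the rescaled symbol: its $s^w_p$-norm is equivalent to the $L^p$-norm of its symplectic Fourier transform. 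Since the sufficiency direction ($a\in L^p\Rightarrow a\in s^w_p$) is exactly what was proved in \cite{To6} (Theorem 4.4$'$ and Remark 6.4), the real content here is the converse, for which I would use Proposition \ref{p1.57B}: $\nm{\fy_j a}{s^w_p}=\sup\{|\scal{\fy_j a}{c}|: \nm c{s^w_{p'}}\le 1\}$, and test against a suitable $c$ concentrated near the point where $|a|$ is comparable to its local sup; the bound $S(m,g)$ gives, together with the $g$-continuity of $m$, that $|a|$ is comparable to $m(X_j)$ on a fixed fraction of $U_j$, and this forces $\nm{\fy_j a}{s^w_p}\gtrsim \nm{\fy_j a}{L^p}$, up to the scaling factor.

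The second ingredient is the summation in $j$. For the $L^p$ side ($1\le p<\infty$) one sums the local pieces using the finite-overlap property (item (3) of Remark \ref{remfeasible}) and the $g$-continuity of $m$, $h_g$, $\Lambda_g$ (Proposition \ref{massinv}), so that $\nm a{L^p}^p\asymp \sum_j \nm{\fy_j a}{L^p}^p$. The harder point is to control $\nm a{s^w_p}$ from the local $\nm{\fy_j a}{s^w_p}$: here I would invoke $\sigma$-temperance of $g$, which is precisely what makes the ``almost orthogonality'' of the pieces $\Op^w(\fy_j a)$ work — the off-diagonal terms $\Op^w(\fy_j a)\Op^w(\fy_k a)^*$ decay in powers of $1+g^\sigma_{X_j}(X_j-X_k)$, and one gets $\nm a{s^w_p}^p\lesssim \sum_j \nm{\fy_j a}{s^w_p}^p$ for $p<\infty$ (and the $\ell^\infty$/sup version for $p=\infty$ or $p=\sharp$). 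This is the Cotlar–Stein-type machinery already developed in \cite{To6}, so I would cite it rather than reprove it; the weight $h_g^{N/2}m\in L^p$ enters exactly because the rescaling factor relating $\nm{\fy_j a}{s^w_p}$ to $\nm{\fy_j a}{L^p}$ is a power of $h_g(X_j)$, and summability of $h_g^{N/2}m$ in $\ell^p$ is what keeps the series convergent.

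For part (2), the argument is the same with $L^p$ replaced by $L^\infty_0$ and $\ell^p$ summation replaced by the ``vanishing at infinity'' condition: one shows $a\in s^w_\sharp$ iff the local norms $\nm{\fy_j a}{s^w_p}\to 0$ as $j\to\infty$ (equivalently $|X_j|\to\infty$), and compares with $\nm{\fy_j a}{L^\infty}\to 0$, using again Proposition \ref{p1.57C} locally and the $g$-continuity of $h_g^{N/2}m$; the hypothesis $h_g^{N/2}m\in L^\infty_0$ makes the rescaling factor harmless at infinity. I expect the main obstacle to be the necessity direction's lower bound $\nm{\fy_j a}{s^w_p}\gtrsim c\,\nm{\fy_j a}{L^p}$ uniformly in $j$ with the correct power of $h_g(X_j)$: it requires choosing the test symbol $c$ in Proposition \ref{p1.57B} carefully and tracking how the rescaling $T_j^*$ distorts both the symbol class bounds and the $s^w_{p'}$-norm of $c$, and making sure the constants genuinely do not depend on $j$ — which is where slow variation, $g$-continuity of $m$, and the uniform $S(1,g)$-bounds on $\{\fy_j\}$ all get used simultaneously.
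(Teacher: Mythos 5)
Your proposal correctly identifies that sufficiency ($a\in L^p\Rightarrow a\in s_p^w$) is Proposition~4.5$'$ of \cite{To6}, and your instinct to work locally on the $g$-balls $U_j$ and to invoke the duality of Proposition~\ref{p1.57B} is the right one. But the way you close the argument has two genuine gaps, and the paper avoids both by a different device.

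First, the local lower bound. You claim that, because $a\in S(m,g)$ and $m$ is $g$-continuous, ``$|a|$ is comparable to $m(X_j)$ on a fixed fraction of $U_j$''. This is false: $a$ may vanish identically on $U_j$. The correct statement (which the paper extracts with some work, via Lemma~\ref{lemma:1} and a Taylor expansion) is that on a subset $I_0$ of indices --- namely those $j$ for which $|a|\ge 2h_g^{N/2}m$ somewhere on $\overline U_j$ --- one can find a sub-ball $U_j^0\subset U_j$ of comparable measure on which $|a|$ is comparable to its own local maximum $|a(Y_j)|$, and on which all derivatives $|a|^G_k$ up to order $N$ are dominated by $|a|$. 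The complementary indices contribute finite $L^p$ norm automatically because $h_g^{N/2}m\in L^p$, so $a\notin L^p$ forces $\nm a{L^p(\Omega_{I_0})}=\infty$. Your proposal has no mechanism for discarding the $j$'s where $a$ is small, and the comparability you assert does not hold without that restriction.

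Second, and more seriously, your summation step runs in the wrong direction. You would need an inequality of the form $\sum_j\nm{\fy_ja}{s_p^w}^p\lesssim\nm a{s_p^w}^p$ in order to pass from local lower bounds $\nm{\fy_j a}{s_p^w}\gtrsim\nm{\fy_j a}{L^p}$ to the global conclusion $\nm a{s_p^w}\gtrsim\nm a{L^p}$. But the Cotlar--Stein estimate you cite from \cite{To6} (Corollary~4.2, the same one used in \eqref{agent008}) is $\nm a{s^w_p}^p\lesssim\sum_j\nm{\fy_j a}{s^w_p}^p$, the opposite inequality, and it is used precisely for the sufficiency direction. The chain $\nm a{L^p}^p\asymp\sum_j\nm{\fy_j a}{L^p}^p\lesssim\sum_j\nm{\fy_j a}{s^w_p}^p\lesssim\nm a{s^w_p}^p$ that you are implicitly relying on does not close.

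The paper sidesteps both problems by testing globally, not locally. Working with the rescaled metric $G=h_g^{-1/2}g$ and the weight $m^{p-1}$, it constructs test symbols $b_J=\bigl(\sum_{j\in J}a|a|^{p-2}\fy_j\bigr)/\nm a{L^p(\Omega_J)}^{p-1}$ for finite $J\subset I_0$, and shows --- crucially, by applying the \emph{sufficiency} result Proposition~\ref{thm1} to the dual exponent $p'$ and to the class $S_N(m^{p-1},G)$ --- that $\nm{b_J}{s_{p'}^w}$ stays bounded as $J$ grows. The local pointwise control described above is exactly what guarantees $b_J$ is a bounded family in $S_N(m^{p-1},G)\cap L^{p'}$. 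Then Proposition~\ref{p1.57B} gives $\nm a{s_p^w}\ge C|(a,b_J)|\ge C'\nm a{L^p(\Omega_J)}\to\infty$, a contradiction if $a\in s_p^w$. So the essential idea you are missing is this bootstrap: the necessity direction is obtained by feeding the sufficiency direction (at the conjugate exponent, on a rescaled metric) into the $s^w_p$--$s^w_{p'}$ duality, rather than by a pointwise comparison of the $s^w_p$ and $L^p$ norms on each $U_j$.
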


\par

Using completely different techniques, Theorem \ref{corthm12} has
already been proved in \cite{BuTo} when $p=\infty$ and $m$ is
$(\sigma,g)$-temperate. Here we prove it as an immediate consequence
of Proposition \ref{thm1} and Proposition \ref{thm2} below. The proof
of Proposition \ref{thm1} is omitted since the result is the same as
Proposition 4.5$'$ in \cite {To6}.

\par

Here and in what follows we set
\begin{equation} \label{k_p}
\kappa _p=\begin{cases}
2[2n(1/p-1/2)]+1,&\text{for  $p\in [1,2)$,}
\\
0,&\text{for  $p\in[2,\infty]$,}
\end{cases}
\end{equation}
where $[x]$ denotes the integer part of the real number $x$.

\par

\begin{prop}\label{thm1}
Assume that $g$ is slowly varying when $p\in[1,2]$ and strongly
feasible when $p\in ]2,\infty]$. Then the following is true:
\begin{enumerate}
\item if 
$h_g^{N/2}m\in L^p(W)$ for some $N\ge \kappa _p$, then we have
\begin{equation}
S_N(m,g)\cap L^p(W)\subseteq s_p^w(W),\label{eqn:1}
\end{equation}
and
\begin{equation}
\nm{a}{s^w_p}\le C\Bigl ( \nm{a}{L^p}+\nm{a}{m,N}^g\nm{h^{N/2}_g
m}{L^p}\Bigr),  \label{eqn:2}
\end{equation}
for some  constant $C$ which is independent of $a$;

\vrum

\item if 
$h_g^{N/2}m\in L^\infty _0(W)$ for some $N\ge 0$, then 
\begin{equation}
S_N(m,g)\cap L^\infty _0(W)\subseteq s_\sharp ^w(W).\label{eqn:3}
\end{equation}
\end{enumerate}
\end{prop}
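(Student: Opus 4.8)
The plan is to decompose $a$ by a partition of unity adapted to $g$, to estimate each localised piece by straightening it with an affine symplectic substitution and passing to the symplectic Fourier transform, and finally to reassemble the pieces by an almost--orthogonality argument. First, since $g$ is slowly varying, Remarks \ref{remfeasible} and \ref{remsetfeasible} furnish points $X_j\in W$, $g_{X_j}$--balls $U_j=\sets{X}{g_{X_j}(X-X_j)<c}$ whose overlap is bounded by a fixed integer $N_0$, and functions $\fy_j\in C_0^\infty(U_j)$ with $0\le\fy_j$, $\sum_j\fy_j=1$, forming a bounded sequence in $S(1,g)$. Putting $a_j=a\fy_j$ we have $a=\sum_ja_j$, $\supp a_j\subseteq U_j$, and, by the Leibniz rule together with the $g$--continuity of $m$, $\nm{a_j}{m(X_j),N}^{g_{X_j}}\le C\nm a{m,N}^g$ and $\nm{a_j}{L^p}\le\nm a{L^p(U_j)}$, uniformly in $j$.

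Second, I would fix $j$, choose symplectic coordinates centred at $X_j$ diagonalising $g_{X_j}$ as in \eqref{e1.0} with eigenvalues $\lambda_1(X_j)\ge\cdots\ge\lambda_n(X_j)$, and apply the affine symplectic map given by that choice of coordinates together with the symplectic dilation $z_k\mapsto\lambda_k(X_j)^{-1/2}z_k$, $\zeta_k\mapsto\lambda_k(X_j)^{1/2}\zeta_k$. By Proposition \ref{p1.57A} this leaves $\nm{a_j}{s_p^w}$ unchanged and preserves $L^p$--norms, and it turns $a_j$ into a symbol $\tilde a_j$ whose derivatives of order $\le N$ in the new variables are controlled by $\nm a{m,N}^gm(X_j)$ --- the derivative in a $\zeta_k$--direction carrying the extra factor $\lambda_k(X_j)\le h_g(X_j)$ --- and whose support lies in the ellipsoid $\sets{Z}{\sum_k(\lambda_k(X_j)^2z_k^2+\zeta_k^2)<c}$, of Lebesgue measure comparable to $\Lambda_g(X_j)^{-1}$. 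This ellipsoid is bounded in the $\zeta$--variables but possibly long in the $z$--variables, so one decomposes it further, by a partition of unity in the $z$--variables only (which therefore does not spoil the $\zeta$--derivative gain), into $O(\Lambda_g(X_j)^{-1})$ pieces of bounded diameter. On each bounded piece Proposition \ref{p1.57C} gives $\nm b{s_p^w}\asymp\nm{\mathscr F_\sigma b}{L^p}$, and the right--hand side is estimated by Hausdorff--Young when $p\ge2$ (so $\nm{\mathscr F_\sigma b}{L^p}\lesssim\nm b{L^{p'}}\lesssim\nm b{L^p}$ on a set of bounded measure) and, when $p<2$, by Hölder in the form $\nm{\mathscr F_\sigma b}{L^p}\le\nm{\eabs\cdot^{-s}}{L^r}\nm{\eabs\cdot^s\mathscr F_\sigma b}{L^2}$ with $1/p=1/r+1/2$, which forces $s>2n(1/p-1/2)$, that is $s=\kappa_p$, the last factor being comparable by Plancherel to a sum of $L^2$--norms of derivatives of $b$ of order $\le\kappa_p$. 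Summing these bounds over the $z$--pieces and over $j$, and carrying along the measure factor $\Lambda_g(X_j)^{-1}$, the $N$ gained factors $\lambda_k(X_j)^{1/2}\le h_g(X_j)^{1/2}$ in the $\zeta$--directions (producing $h_g(X_j)^{N/2}$), and the relation $m\asymp m(X_j)$ on $U_j$, one is led to the local estimate $\nm{a_j}{s_p^w}\le C(\nm a{L^p(U_j)}+\nm a{m,N}^g\nm{h_g^{N/2}m}{L^p(U_j)})$.

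Third, I would reassemble. The map $(b_j)_j\mapsto\sum_jb_j$ is bounded from $\ell^1(\mathscr I_1)$ to $\mathscr I_1$ by the triangle inequality, and from $\ell^2(\mathscr I_2)$ to $\mathscr I_2$ with constant depending only on $N_0$ by the Cauchy--Schwarz inequality together with $s_2^w=L^2$; interpolating (cf.\ Proposition \ref{interpolprop}) yields boundedness $\ell^p(\mathscr I_p)\to\mathscr I_p$ for $p\in[1,2]$ depending only on $N_0$. For $p\in(2,\infty]$ one interpolates the $\ell^2(\mathscr I_2)$ bound with a Cotlar--Stein estimate $\ell^\infty(\mathscr I_\infty)\to\mathscr I_\infty$ instead, and it is exactly the off--diagonal decay, in $g^\sigma_{X_j}(X_j-X_k)$, of the operator products $\Op ^w(a_j)^*\Op ^w(a_k)$ that requires the $\sigma$--temperateness built into strong feasibility. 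Applied to $(a_j)_j$ together with the local estimate of the second step, and the overlap bounds $\sum_j\nm a{L^p(U_j)}^p\le N_0\nm a{L^p}^p$ and $\sum_j\nm{h_g^{N/2}m}{L^p(U_j)}^p\le N_0\nm{h_g^{N/2}m}{L^p}^p$ (valid since $0\le\fy_j\le1$), this gives \eqref{eqn:2}, hence \eqref{eqn:1}. For \eqref{eqn:3} one runs the same decomposition: if $h_g^{N/2}m\in L^\infty_0$ then, given $\ep>0$, all but finitely many of the local bounds are $\le\ep$, so $\Op ^w(a)$ is within $C\ep$ in operator norm of a finite sum of operators $\Op ^w(a_j)$, each of which is Hilbert--Schmidt and hence compact; letting $\ep\to0$ shows $\Op ^w(a)$ is compact, i.e.\ $a\in s_\sharp^w$.

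The step I expect to be the main obstacle is the second one. A symplectic map cannot turn the anisotropic ellipsoid $U_j$ into a ball (symplectic maps preserve Lebesgue measure), so one is forced into the auxiliary decomposition into $O(\Lambda_g(X_j)^{-1})$ pieces, and the delicate point is to show that the accumulated constants, together with the anisotropic decay of the symplectic Fourier transforms of the pieces, recombine into precisely the weight $\nm{h_g^{N/2}m}{L^p(U_j)}$ --- in particular that the gain in the derivative directions is exactly $h_g^{N/2}$ and that $\kappa_p$ derivatives genuinely suffice when $p<2$. This is also where the dichotomy in the hypotheses originates: for $p>2$ no derivatives can be spent ($\kappa_p=0$), so the reassembly in the third step must be a genuine Cotlar--Stein argument needing $\sigma$--temperateness, whereas for $p\le2$ the $\kappa_p$ derivatives make the reassembly a soft interpolation requiring only that $g$ be slowly varying.
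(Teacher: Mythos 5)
The paper does not actually prove Proposition~\ref{thm1}: it is stated with the remark that its proof is identical to that of Proposition~4.5$'$ in \cite{To6}, so there is no internal proof to compare against. Your outline has the right skeleton (partition of unity adapted to $g$, a local estimate obtained via a symplectic normalisation and the symplectic Fourier transform, and an interpolation/Cotlar--Stein reassembly), but the crucial second step, as written, produces the \emph{weaker} conclusion of Theorem~\ref{thm3.9} rather than Proposition~\ref{thm1}, and it contains a bookkeeping error that signals the difficulty.

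First, the directional accounting. With the symplectic dilation $z_k\mapsto\lambda_k^{-1/2}z_k$, $\zeta_k\mapsto\lambda_k^{1/2}\zeta_k$, one finds that the rescaled symbol $\tilde a_j$ is supported in $\{\sum_k(\tilde z_k^2+\lambda_k^2\tilde\zeta_k^2)<c\}$ (long in $\tilde\zeta$) and that the derivative gain is $|\partial_{\tilde z}^\alpha\partial_{\tilde\zeta}^\beta\tilde a_j|\lesssim m(X_j)\prod_k\lambda_k^{\beta_k}$: the factor $\lambda_k\le h_g(X_j)$ sits on the $\tilde\zeta$-derivatives, i.e.\ \emph{on the long direction}, not on the bounded one as you assert. (Symplectic dilations pair the two, so the gain direction and the long direction always coincide; your parenthetical remark about the cut ``not spoiling the gain'' is therefore the opposite of what happens.) When you cut the ellipsoid into $O(\Lambda_g^{-1})$ unit blocks you cut in the long direction, and the Leibniz rule forces you to take derivatives of the cut-off in exactly that direction; those carry no $\lambda$-factor. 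Consequently the per-block $N$-th derivative bound is $m(X_j)$, \emph{not} $h_g(X_j)^{N/2}m(X_j)$, and the summed estimate you would actually obtain is of the form $\nm{a_j}{s_p^w}\lesssim\nm a{L^p(U_j)}+m(X_j)\Lambda_g(X_j)^{-1/p}\cdot(\text{extra factor})$, which recombines to the hypothesis $\Lambda_G^{1/p}h_g^{N/2}m\in L^p$ of Theorem~\ref{thm3.9}, not to $h_g^{N/2}m\in L^p$.

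Second, and relatedly, the derivative count your Hölder argument yields is $\kappa_p'$, not $\kappa_p$. You need $\eabs\cdot^{-s}\in L^r$ with $1/r=1/p-1/2$, which forces $s>2n(1/p-1/2)$, hence the integer $s=[2n(1/p-1/2)]+1=\kappa_p'$. But Proposition~\ref{thm1} allows $N\ge\kappa_p=2\kappa_p'-1$, roughly twice as many, and those extra derivatives are precisely the currency that pays for dropping the $\Lambda_G^{1/p}$ factor. (The paper itself flags this trade-off in Section~\ref{sec4}: Proposition~\ref{thm1} uses more regularity but a weaker weight assumption than Theorem~\ref{thm3.9}.) So a proof of Proposition~\ref{thm1} must spend the additional $\kappa_p-\kappa_p'$ derivatives somewhere to absorb $\Lambda_G^{1/p}$; your second step contains no mechanism for doing this, and indeed $\Lambda_G\equiv1$ in the feasible case, which is exactly when the two statements coincide. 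Finally, for $p>2$ the Cotlar--Stein estimate is asserted without detail; this is plausible and is where $\sigma$-temperateness must enter, but as written it is a nontrivial gap, whereas the natural route is to establish $p=\infty$ directly via H\"ormander's $L^2$-boundedness theorem for $S(m,g)$ with $h_g^{N/2}m\in L^\infty$ and interpolate with $p=2$.
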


\par

The next result is the needed converse of Proposition \ref{thm1}.

\par

\begin{prop}\label{thm2}
Assume that $g$ is strongly feasible  when $p\in [1,2)$ and feasible
when $p\in [2,\infty]$, and that $m$ is
$g$-continuous. Then the following is true:
\begin{enumerate}
\item if 
$h_g^{N/2}m\in L^p(W)$ for some $N\ge 0$, then 
\begin{equation}\label{eqn:4}
S(m,g)\cap s_p^w(W) \subseteq L^p(W)\text ;  
\end{equation}

\vrum

\item if  $h_g^{N/2}m\in L^\infty _0(W)$ for some $N\ge 0$, then
\begin{equation}
S(m,g)\cap s_\sharp ^w(W) \subseteq L^\infty _0(W).\label{eqn:5}
\end{equation}
\end{enumerate}
\end{prop}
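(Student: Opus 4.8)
The plan is to reduce the global statement to a local one by means of the partition of unity $\{\fy _j\}$ from Remark \ref{remsetfeasible}, associated with a Littlewood--Paley--type decomposition adapted to $g$, and then to use Proposition \ref{p1.57C} together with the $\sigma$-temperance to control the local pieces. First I would fix $a\in S(m,g)\cap s^w_p(W)$ and write $a=\sum _j a\fy _j$. The crucial point is that each $a\fy _j$ is compactly supported in the $g_{X_j}$-ball $U_j$, so after the affine symplectic change of variables $T_j$ that maps $U_j$ to a ball of fixed euclidean radius $r>1$ (this is possible since $g_{X_j}$ is a constant metric that can be symplectically diagonalized as in \eqref{e1.0}, with eigenvalues $\lambda _i(X_j)$), Proposition \ref{p1.57C} applies to $T_j^*(a\fy _j)$. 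Combining the invariance $\nm {a}{s^w_p}=\nm {T_j^*a}{s^w_p}$ from Proposition \ref{p1.57A} with \eqref{sova} gives
\begin{equation*}
\nm {a\fy _j}{s^w_p}\asymp \nm {\mathscr F_\sigma (T_j^*(a\fy _j))}{L^p}
\asymp \nm {a\fy _j}{s^w_p},
\end{equation*}
and the Hausdorff--Young-type reasoning built into \eqref{sova} should, after undoing $T_j$, produce a two-sided comparison between $\nm {a\fy _j}{s^w_p}$ and a weighted $L^p$ norm of $a\fy _j$ with weight a power of $\Lambda _{g}(X_j)^{-1}\sim h_g$-factors; here the Jacobian of $T_j$ contributes $\Lambda _g(X_j)^{\pm 1/2}$ and the number of relevant frequency packets contributes the Planck powers. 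In particular one expects a lower bound of the shape $\nm {a\fy _j}{L^p}\le C\,h_g(X_j)^{-M/2}\nm {a\fy _j}{s^w_p}$ for a suitable $M$ depending only on $n$ and $p$.

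Next I would assemble the pieces. Since $h_g\le 1$ (feasibility) and $h_g$, $\Lambda _g$ are $g$-continuous and $(\sigma ,g)$-temperate by Proposition \ref{massinv}, the hypothesis $h_g^{N/2}m\in L^p$ and the bound $|a|\le C\,m$ let me estimate $\nm{a\fy _j}{L^p}$ against $\nm{a\fy _j}{s_p^w}$ times a power of $h_g(X_j)$ that is integrable in $j$ after summation, using the finite-overlap property (3) of Remark \ref{remfeasible}. For $p<\infty$ the passage from $\sum _j\nm{a\fy _j}{s_p^w}^p<\infty$ to $\sum _j\nm{a\fy _j}{s_p^w}^p\le C\nm a{s_p^w}^p$ should come from the almost-orthogonality of the pieces $\Op^w(a\fy _j)$ (their symbols have essentially disjoint supports, hence the operators are almost orthogonal in $\mathscr I_p$ in the sense exploited in Section 6 of \cite{To6} and in \cite{BuNi}); this is exactly where $\sigma$-temperance of $g$ is used for $p\in[1,2)$, while for $p\in[2,\infty]$ the weaker feasibility suffices because the relevant inequality goes in the easy direction (here $s_p^w\supseteq s_2^w=L^2$ and one can interpolate or use the $\ell^p\hookrightarrow\ell^2$ ordering). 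For part (2) the same scheme applies with $L^p$ replaced by $L^\infty _0$: the tail condition on $h_g^{N/2}m$ forces $\nm{a\fy _j}{L^\infty}\to 0$ as $|X_j|\to\infty$, and compactness of $\Op^w(a)$ means $\nm{a\fy _j}{s^w_\infty}\to 0$, whence $a\in L^\infty _0$.

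I expect the main obstacle to be the almost-orthogonality estimate controlling $\sum _j\nm{a\fy _j}{s_p^w}$ (or its $p$-th power) by $C\nm a{s_p^w}$ — equivalently, showing that $a\mapsto (a\fy _j)_j$ is bounded $s_p^w\to \ell^p(s_p^w)$. The symbols $\fy _j$ form a bounded family in $S(1,g)$ but their Weyl products with $a$ are not exactly multiplication operators on the operator side, so one must control the remainders in the Weyl composition $\Op^w(\fy _j)\Op^w(a)$ versus $\Op^w(\fy _j a)$, and sum the resulting error operators in $\mathscr I_p$; the $\sigma$-temperance of $g$ (and the resulting temperance of $h_g$ and $\Lambda _g$) is what makes these remainders summable. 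A secondary technical point is tracking the exact power of $h_g$ produced by the rescaling in Proposition \ref{p1.57C} so that it matches the exponent $N/2$ in the hypothesis after absorbing the $g$-continuity constants; since the conclusion only needs $a\in L^p$ and not a sharp norm bound, any finite such power is acceptable, which should make this bookkeeping routine rather than delicate.
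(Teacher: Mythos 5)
Your proposal takes a genuinely different route from the paper, and the route has a serious gap at exactly the point you flag as the ``main obstacle.''

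The paper's own proof is a \emph{duality} argument by contraposition: assuming $a\in S(m,g)\setminus L^p$, it sets $G=h_g^{-1/2}g$, uses Proposition~\ref{thm1} with the \emph{conjugate} exponent $p'$ and weight $m^{p-1}$ to build explicit compactly supported test symbols
$$
b_J=\Big(\sum_{j\in J}a\,|a|^{p-2}\fy_j\Big)\Big/\nm a{L^p(\Omega_J)}^{p-1}
$$
that are uniformly bounded in $s_{p'}^w$, and then uses the duality inequality $\nm a{s_p^w}\ge C\,\abp{(a,b_J)}\gtrsim\nm a{L^p(\Omega_J)}$ from Proposition~\ref{p1.57B} to force $\nm a{s_p^w}=\infty$. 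This is why strong feasibility is required for $p\in[1,2)$: Proposition~\ref{thm1} is applied with exponent $p'>2$, and for exponents above $2$ it needs $\sigma$-temperance. The heavy lifting is in constructing the sub-balls $U_j^0$ and the lower bound \eqref{formula2.4} on $|a|$ there; no decomposition estimate for $\nm a{s_p^w}$ itself is ever needed.

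Your proposal instead tries to prove the \emph{forward} summability
$$
\sum_{j}\nm{a\fy_j}{s_p^w}^p\le C\,\nm a{s_p^w}^p,
$$
combine it with a pointwise lower bound $\nm{a\fy_j}{L^p}\lesssim h_g(X_j)^{-M/2}\nm{a\fy_j}{s_p^w}$ from Proposition~\ref{p1.57C}, and sum. You correctly identify the summability estimate as the crux, but you do not prove it, and it is not a ``routine almost-orthogonality'' fact. Note that the estimate the paper actually has, \eqref{agent008}, runs in the \emph{opposite} direction: $\nm a{s_p^w}^p\le C\sum_j\nm{\fy_j a}{s_p^w}^p$. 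The direction you need would say that the map $a\mapsto(a\fy_j)_j$ is bounded $s_p^w\to\ell^p(s_p^w)$; this involves controlling, uniformly and with $\ell^p$-summable errors, the difference between $\Op^w(\fy_j a)$ and $\Op^w(\fy_j)\Op^w(a)$ for an arbitrary $a\in s_p^w$ (not a smooth symbol), and nothing in the paper supplies it. Without that lemma the argument does not close. There is also a smaller confusion in the local step: the display $\nm{a\fy_j}{s_p^w}\asymp\nm{\mathscr F_\sigma(T_j^*(a\fy_j))}{L^p}\asymp\nm{a\fy_j}{s_p^w}$ is circular, and what \eqref{sova} actually gives is a comparison with $\nm{\mathscr F_\sigma(\cdot)}{L^p}$, not with $\nm\cdot{L^p}$; you then need Hausdorff--Young together with the compact support to pass to $\nm{a\fy_j}{L^p}\lesssim\nm{a\fy_j}{L^{p'}}\lesssim\nm{\mathscr F_\sigma(a\fy_j)}{L^p}$, and the Jacobian of $T_j$ must be tracked explicitly. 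In short: a workable idea for the local piece, but the global assembly rests on an unproved (and in this form probably false or at least much harder) estimate, whereas the paper's duality trick sidesteps it entirely.
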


\par

We need some preparations for the proof and start with the following
lemma, which is essentially the same as Lemma 3.1 of \cite{Sh}.

\par

\begin{lemma}\label{fderest}
Assume that $f\in C^2([0,r])$. Then
$$
|f'(0)|\le 4(r^{-1}+1)\Big ( \max _{t\in [0,r]}|f(t)|+\max _{t\in
[0,r]}|f''(t)|\Big ).
$$
\end{lemma}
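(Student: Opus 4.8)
The plan is to prove Lemma~\ref{fderest} by a standard Taylor-expansion argument, following Lemma~3.1 of \cite{Sh}. Let $f\in C^2([0,r])$, write $M_0=\max_{t\in[0,r]}|f(t)|$ and $M_2=\max_{t\in[0,r]}|f''(t)|$, and set $h=\min(r,1)>0$ so that $[0,h]\subseteq[0,r]$. Taylor's formula with integral remainder at $0$ gives, for $t\in[0,h]$,
\begin{equation*}
f(t)=f(0)+tf'(0)+\int_0^t (t-s)f''(s)\,ds,
\end{equation*}
so that
\begin{equation*}
|t\,f'(0)|\le |f(t)|+|f(0)|+\int_0^t|t-s|\,|f''(s)|\,ds\le 2M_0+\tfrac{t^2}{2}M_2.
\end{equation*}
Evaluating at $t=h$ and dividing by $h$ yields $|f'(0)|\le \tfrac{2}{h}M_0+\tfrac{h}{2}M_2\le \tfrac{2}{h}(M_0+M_2)$, since $h\le 1$.

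It then remains to bound $2/h$ by $4(r^{-1}+1)$. If $r\ge 1$ then $h=1$ and $2/h=2\le 4\le 4(r^{-1}+1)$; if $r<1$ then $h=r$ and $2/h=2r^{-1}\le 4r^{-1}\le 4(r^{-1}+1)$. In either case $|f'(0)|\le 4(r^{-1}+1)(M_0+M_2)$, which is the claimed inequality. The numerical constant $4$ is not sharp, but it is all that is needed and the crude case split above suffices.

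There is no real obstacle here; the only point requiring a little care is the choice to truncate the interval to length $\min(r,1)$ rather than using all of $[0,r]$ — expanding on the full interval would produce a term $\tfrac{r}{2}M_2$ that is not controlled when $r$ is large, which is precisely why the factor $(r^{-1}+1)$ (rather than $r^{-1}$ alone) appears in the statement. Everything else is a one-line application of Taylor's theorem together with the elementary estimates $|f(0)|\le M_0$ and $\int_0^t|t-s|\,ds=t^2/2$.
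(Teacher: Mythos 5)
Your proof is correct, and it takes a genuinely different route from the paper's. The paper applies the mean value theorem twice: first to show that $|f'(t)|\ge |f'(0)|/2$ whenever $t\le \delta:=\min\{r,|f'(0)|/(2M_2)\}$, then to write $f(\delta)-f(0)=f'(s)\delta$ for some $s\in[0,\delta]$, yielding $|f'(0)|/2\le 2M_0/\delta$; a case split on which term realizes the minimum in $\delta$ then produces either $|f'(0)|\le 4M_0/r$ or $|f'(0)|^2\le 8M_0M_2$, and the latter is absorbed via $8M_0M_2\le 2(M_0+M_2)^2$. Your argument instead uses Taylor's formula with integral remainder on a truncated interval $[0,\min(r,1)]$, giving a single clean estimate with no case analysis on $|f'(0)|$ itself (only a trivial split on whether $r\ge 1$). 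Your version is more direct and a bit easier to audit; the paper's version has the feature that the intermediate inequality $|f'(0)|^2\le 8M_0M_2$ gives a scale-invariant bound of the form $|f'(0)|\lesssim \sqrt{M_0M_2}$ when $r$ is large, which is sharper than the sum $M_0+M_2$ in that regime, though both yield the stated lemma with the same constant $4(r^{-1}+1)$.
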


\par

\begin{proof}
We may assume $f'(0)\ne 0$.  Set
$$
M_j=\max_{t\in [0,r]}|f^{(j)}(t)|,\quad\text{for $j=0,2$.}
$$
By the mean value theorem we have
$|f'(t)-f'(0)|\le M_2t$, for all $t\in[0,r]$. Then
$$
|f'(0)|/2\le |f'(t)|\qquad \text{for} \quad 2M_2t\le |f'(0)|\quad
\text{and}\quad 0\le t\le r.
$$

\par

Let $\delta=\min \{r,\,|f'(0)|/(2M_2)\}$.
Then using the mean value theorem again it follows that
$f(\delta)-f(0)=f'(s)\delta$ for some $s\in [0,\delta]$. This gives
$$
|f'(0)|/2\le |f'(s)|\le |f(\delta)-f(0)|/\delta\le 2M_0/\delta .
$$
Then either
$$
|f'(0)|/2\le 2M_0/r\quad \text{or}\quad |f'(0)|/2\le
4M_0M_2/|f'(0)|.
$$
The result now follows by combining these inequalities.
\end{proof}

\par

\begin{lemma}\label{lemma:1}
Assume that $g$ is slowly varying, $N\in \mathbf N$, and consider  the
open ball
$$
U_X=\sets {Y\in W}{g_X(Y-X)<c},
$$
where $c$ is the same as in \eqref{slowly}.  Then there exists a positive
constant $C_0$, depending only
on $N$, $n$ and the constants in \eqref{slowly}  such that
$$
\sup_{k\le N}\sup_{Y\in \overline U_X}|a|^g_k(Y)\le
C_0\Bigl(\sup_{Y\in \overline U_X}|a(Y)|+\sup_{Y\in \overline
U_X}|a|^g_N(Y)\Bigr),
$$
for all $X\in W$ and all  $a\in C^N(W)$.
\end{lemma}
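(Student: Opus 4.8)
The plan is to reduce the statement to a one-dimensional interpolation inequality and feed in Lemma \ref{fderest}. First I would fix $X$ and pick linear coordinates on $W$ in which $g_X$ is the standard Euclidean form $|\cdot|^2$, so that $\overline U_X$ becomes the closed Euclidean ball $\overline B(X,\sqrt c)$. Since $g$ is slowly varying, \eqref{slowly} gives $C^{-1}g_X\le g_Y\le Cg_X$ for every $Y\in\overline U_X$, hence for each $j$ and each $Y\in\overline U_X$ the quantity $|a|^g_j(Y)$ of \eqref{e1.1} is comparable --- with a constant depending only on $j$, $n$ and the constants in \eqref{slowly}, using also the elementary polarization estimate for symmetric multilinear forms --- to $\sup_{|w|\le1}|\partial_w^j a(Y)|$, where $\partial_w$ is the Euclidean directional derivative. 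So it suffices to bound $\sup_{Y\in\overline U_X}\sup_{|w|\le1}|\partial_w^j a(Y)|$ for $0\le j\le N$ by a fixed multiple of $\sup_{\overline U_X}|a|+\sup_{\overline U_X}\sup_{|w|\le1}|\partial_w^N a|$.

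For the one-dimensional input I would use the estimate that for every $f\in C^N([0,r])$ and $0\le j\le N$,
\begin{equation*}
|f^{(j)}(0)|\le C(N,r)\Bigl(\max_{[0,r]}|f|+\max_{[0,r]}|f^{(N)}|\Bigr);
\end{equation*}
for $N=2$ this is Lemma \ref{fderest}, and in general it follows from it together with a standard polynomial-interpolation argument (write $f$ as a degree-$(N-1)$ interpolating polynomial plus a remainder controlled by $f^{(N)}$ via Rolle's theorem, and estimate the polynomial part by Markov's inequality). Applying this with $f(t)=a(Y_0+tw)$ along a segment $\{Y_0+tw:0\le t\le r_0\}\subseteq\overline U_X$ and a unit vector $w$ yields $|\partial_w^j a(Y_0)|\le C(N,r_0)\bigl(\sup_{\overline U_X}|a|+\sup_{\overline U_X}\sup_{|v|\le1}|\partial_v^N a|\bigr)$. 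The key observation is that such a segment of a \emph{fixed} length $r_0>0$ exists for \emph{every} $Y_0\in\overline U_X$ as long as $w$ ranges over a cone of ``inward'' unit directions of fixed angular size: if $|Y_0-X|\le\sqrt c/2$ any $w$ works with $r_0=\sqrt c/2$, while if $\sqrt c/2<|Y_0-X|\le\sqrt c$ one may take any unit $w$ with $w\cdot(X-Y_0)\ge\tfrac12|X-Y_0|$, again with $r_0=\sqrt c/2$ (a short computation shows $|Y_0+tw-X|\le\sqrt c$ for $0\le t\le r_0$ in both cases).

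It then remains to recover $\sup_{|w|\le1}|\partial_w^j a(Y_0)|$ from the values $\partial_w^j a(Y_0)$ with $w$ in that cone. For fixed $Y_0$ and $j$ the map $w\mapsto\partial_w^j a(Y_0)=\sum_{|\alpha|=j}\binom{j}{\alpha}(\partial^\alpha a(Y_0))\,w^\alpha$ is a homogeneous polynomial of degree $j$ on $\rr{2n}$, and on the finite-dimensional space of such polynomials the supremum over the unit sphere and the supremum over a spherical cap of given angular size are equivalent norms; by rotation invariance the equivalence constant depends only on $j$, $n$ and the angular size, hence is uniform in $Y_0$. Combining the three steps gives $\sup_{\overline U_X}\sup_{|w|\le1}|\partial_w^j a|\le C_0\bigl(\sup_{\overline U_X}|a|+\sup_{\overline U_X}\sup_{|w|\le1}|\partial_w^N a|\bigr)$ with $C_0$ depending only on $N$, $n$ and the constants $c,C$ in \eqref{slowly}, which is the claim. (Alternatively one could extend $a|_{\overline U_X}$ to a $C^N$ function on $W$ with simultaneous control of its $C^0$ and $C^N$ norms and invoke the classical Landau--Kolmogorov inequality in $L^\infty(\rr{2n})$; the route above has the advantage of using essentially only Lemma \ref{fderest}.)

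The step I expect to be the main obstacle is the behaviour near the boundary sphere of $\overline U_X$: since $c$ is a given --- possibly small --- constant, there is no room to enlarge the ball while staying where \eqref{slowly} applies with the given constants, so one cannot simply restrict $a$ to segments through a boundary point in arbitrary directions and stay inside $\overline U_X$. This is precisely what forces the two-stage argument above: first control only the pure directional derivatives $\partial_w^j$ along a fixed inward cone via the one-dimensional lemma, and then restore the remaining mixed derivatives by the polynomial/polarization argument.
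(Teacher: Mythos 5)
Your argument is correct, and it reaches the same endpoint as the paper but by a genuinely different route. The paper first reduces to $N=2$ (asserting that the general case follows by induction), and then, for each $X_0\in\overline U_X$, \emph{constructs} an explicit $g_X$-orthonormal basis $e_1,\dots,e_{2n}$ with the property that $X_0-X$ is a negative multiple of the diagonal $e_1+\cdots+e_{2n}$; a short computation then shows $X_0+te_j\in U_X$ for $0\le t<\sqrt{c/2n}$ and all $j$, so Lemma \ref{fderest} applies along every basis direction, and the trivial estimate $|a|^g_1(X_0)\le C\sum_j|a^{(1)}(X_0;e_j)|$ finishes the $N=2$ case. You avoid both of those devices: instead of a specially adapted basis you use the observation that a cone of inward unit directions of fixed angular size works for every $Y_0$, and you restore the full supremum over the unit sphere from the cone values by the equivalence-of-norms argument on the finite-dimensional space of degree-$j$ homogeneous polynomials (which is a norm on that space by real analyticity, and whose constant is rotation-invariant, hence uniform in $Y_0$). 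In exchange you need the general-$N$ one-dimensional Landau--Kolmogorov estimate rather than only the $N=2$ case, which you get from Lemma \ref{fderest} together with a standard interpolating-polynomial/Rolle/Markov argument. Both routes are sound; yours is more flexible (it handles all $j\le N$ in one pass and does not rely on the dimension-counting coincidence that makes the diagonal basis work), while the paper's is more elementary and self-contained (no norm equivalence on polynomial spaces, no extension of the one-dimensional lemma, just the explicit $N=2$ computation plus an induction that, admittedly, the paper leaves unexplained). Your last paragraph correctly identifies the boundary behaviour as the genuine difficulty and correctly explains why the two-stage cone argument is needed; the same difficulty is what motivates the paper's diagonal-basis construction.
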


\par

\begin{proof}
By induction we may assume $N=2$. Let $X_0\in \overline U_X$ be
fixed. We shall find an appropriate basis $e_1,\ldots,e_{2n}$,
orthonormal  with respect to $g_X$, and such that
\begin{equation}\label{Zcond}
X_0+t e_j\in U_X,
\qquad \text{for }\quad 0\le t<\sqrt{c/2n}.
\end{equation}
Let us  first show that it is always possible to find $e_1,\dots
,e_{2n}$ such that \eqref{Zcond} is fulfilled. Since this is obviously
true for $X_0=X$, we may assume $X_0\ne X$. Let
$$
\tilde g_X(Y,Z)=\bigl(g_X(Y+Z)-g_X(Y-Z)\bigr)/4
$$ 
be the  polarization of $g$ and  choose the basis $e_1,\ldots, e_{2n}$
such that
$$
\tilde g_X(X_0-X,e_j)=-\sqrt{g_X(X_0-X)/(2 n)},\qquad\text{for
$j=1,\ldots,2n$.}
$$
This is possible  if we choose $e_1,\dots ,e_{2n}$ in such way that
$X_0-X=-t_0(e_1+\cdots +e_{2n})$ for some $t_0>0$. Then we have
\begin{multline*}
g_X(X_0+t e_j-X)=g_X(X_0-X)-2t \sqrt{g_X(X_0-X)/(2 n)}+t^2
\\[1ex]
=(t-\sqrt{g_X(X_0-X)/(2 n)})^2 + (1-(2n)^{-1})g_X(X_0-X) < c,
\end{multline*}
since it follows from the assumptions that
$$
-\sqrt {c/(2n)} \le t-\sqrt{g_X(X_0-X)/(2 n)} \le \sqrt {c/(2n)}\qquad
\text{and}\qquad g_X(X_0-X)<c.
$$
Since $\tilde g_X(Z,e_j)\le1$, for  $g_X(Z)= 1$,   we have
$$
|a|^g_1(X_0)\le C  \sup_{\scriptscriptstyle g_X(Z)=1}
\left|a^{(1)}(X_0;Z)\right|\le C \sum_{j=1}^{2n}|a^{(1)}(X_0;e_j)|,
$$
where $C$ is the same as in \eqref{slowly}.

\par

If we let $f(t)=a(Z+t e_i)$ and $r=\sqrt {c/2n}$, Lemma \ref{fderest}
shows that
\begin{equation}
|a^{(1)}(X_0;e_i)|\le 4(\sqrt {2n/c}+1)\Bigl ( \max_{Y\in\overline
U_X}|a(Y)|+\sum_{j=1}^{2n}\max_{Y\in \overline
U_X}|a^{(2)}(Y;e_j,e_j)|\Bigr ), \label{eqn:9}
\end{equation}
for $i=1,\ldots,2n$. The result now follows from \eqref{eqn:9} and
$$
\sum_{j=1}^{2n}\max _{Y\in \overline U_X}|a^{(2)}a(Y;e_j,e_j)|
\le 2n\max_{Y\in \overline U_X}|a|^g_2(Y),
$$
which completes the proof.
\end{proof}

\par

\begin{proof}[Proof of Proposition \ref{thm2}.]
We only prove (1)  in the case $p<\infty$. The case $p=\infty$
and assertion (2) follow by similar arguments and are left for the
reader.

\par

Assume that $a\in S(m,g)$, $a\notin L^p(W)$ and set
$G_X=h_g(X)^{-1/2}g_X$. Furthermore, since $h_g\le 1$, it follows from
Remark~\ref{feasrem} that $G$ is feasible (strongly feasible) when $g$
is feasible (strongly feasible), and that the hypothesis still holds after $N$
has been replaced by a larger number. Hence, \eqref{eqn:10} and
Proposition~\ref{thm1} give that $h^{N/2}_G m^{p-1}\in L^{p'}$ and
\begin{equation}\tag*{(\ref{eqn:2})$'$}
\nm{b}{s^w_{p'}}\le C\Bigl (
\nm{b}{L^{p'}}+\nm{b}{m^{p-1},N}^G\nm{h^{N/2}_G m}{L^{p'}}\Bigr),
\end{equation}
is fulfilled when $b\in S(m^{p-1},G)$, provided that $N$ has been
replaced by a larger number if necessary. In particular we may assume
that
\begin{equation}
h_G^{N/2}m^{p-1}\in L^{p'}(W),\qquad\text{with $N\ge \kappa
_{p'}$}\quad \text{and}\quad S_N(m^{p-1},G)\cap L^{p'}\subseteq
s^w_{p'}. \label{eqn:10}
\end{equation}

\par

Next let $U_j$ and $X_j$   for $j\in \mathbf N$ be the same
as in Remark \ref{remfeasible} after $g$ has been replaced by $G$, and
let $\ep _0$ and $N_0$ be the same as $\ep$ and $N$ respectively in
Remark \ref{remfeasible}. Also let $I_0$ be the set of all $j\in
\mathbf N$ such that $2h_g(X)^{N/2}m(X)\le
|a(X)|$ for some $X\in \overline{U}_j$, and set for each $J\subseteq
\mathbf N$,
$\Omega _J=\cup _{j\in J}U_j$. For each $j\in \mathbf N$ we choose a
point $Y_j\in \overline{U}_j$ such that $|a(X)|\le
|a(Y_j)|$ when $X\in \overline{U}_j$. Then it follows that $I_0$ is an
infinite set and that $\nm a{L^p(\Omega _{I_0})}=+\infty$,
since $a\in L^p_{\mathrm {loc}}(W)\setminus L^p(W)$.

\par

In a moment we shall prove that there are constants $C$ and $r_0>0$,
and a sequence $\{ X_j^0 \} _{j\in I_0}$ such that for any $j\in I_0$
it holds
\begin{gather}
U_j^0=\sets{Y\in W}{G_{X_j}(Y-X^0_j)<4r_0^2} \subset U_j,
\label{eqn:11}
\intertext{and}
\begin{gathered}
h_g(X)^{N/2}m(X)\le |a(Y_j)|/2 \le |a(X)|\le |a(Y_j)|,
\\[1ex]
|a|^G_k(X)\le C |a(X)|,
\end{gathered}\label{formula2.4}
\end{gather}
for all $X\in U_j^0$ and $k\le N$.

\par

Admitting this for a while we may proceed as follows. Let $U_j^1$ be
the open ball with center at $X_j^0$ and radius $r_0$ (with respect to
the metric $G_{X_j}$), and choose a bounded sequence $\{ \fy _j\}
_{j\in I_0}$ in $S(1,G)$ such that $0\le \fy _j \le 1$, $\fy _j\in
C_0^\infty (U_j^0)$ and $\fy _j =1$ in $U_j^1$. Also let $J$ be an
arbitrary finite subset of $I_0$. Then it follows from
\eqref{eqn:11} and \eqref{formula2.4} and
the fact that there is a bound of overlapping $U_j$, that for some
constant $C$ which is independent of $j\in I_0$ and $J$ it holds
$$
|U_j^1| \le |U_j^0|\le |U_j|\le C|U_j^1|,\quad j\in I_0
$$
and
\begin{align}
&\Big ( \sum _{j\in J} |a(Y_j)|^p |U_j^1| \Big )^{1/p} 
\le C\Big (\sum _{j\in J} \nm a{L^p(U_j^1)}^p\Big )^{1/p}
\label{est3.55}
\\[1ex]
&\qquad\le C^2 \nm a{L^p(\Omega _J)}\le C^3\Big ( \sum _{j\in J}
|a(Y_j)|^p|U_j^1|\Big )^{1/p}. \notag
\end{align}

\par

Now we let
$$
b_J(X) = \Big ( \sum _{j\in J}{a(X)}|a(X)|^{p-2}\fy _j(X)\Big
)/\nm a{L^p(\Omega _J)}^{p-1}.
$$
Then it follows from  \eqref{est3.55} that
$$
\sup_J\nm {b_J}{L^{p'}(\Omega _J)}<\infty.
$$ 

\par

In fact, since there is a bound of overlapping $U_j$, we have
\begin{multline*}
\nm {b_J}{L^{p'}} \le \frac 1{\nm {a}{L^p(\Omega _J)}^{p-1}}\Big (\Big (\sum _{j\in J}|a(X)|^{p-1}\fy _j(X)\Big )^{p'}\, dX\Big )^{1/p'}
\\[1ex]
\le C_1\frac 1{\nm {a}{L^p(\Omega _J)}^{p-1}}\Big (\Big (\sum _{j\in J}\int _{U_j}|a(X)|^{p}\, dX\Big )\Big )^{1/p'}
\\[1ex]
\le C_2\frac 1{\nm {a}{L^p(\Omega _J)}^{p-1}} \Big (\int _{\Omega _J} |a(X)|^p\, dX\Big )^{(p-1)/p}{L^p(\Omega _J)}= C_2,
\end{multline*}
for some constants $C_1$ and $C_2$.

\par

Furthermore, by  \eqref{formula2.4} it follows that the set of all
$b_J$ is a bounded subset of $S_N(m^{p-1},G)$. 
Hence \eqref{eqn:2}$'$ and \eqref{eqn:10} give
$$
\sup_J\nm {b_J}{s_{p'}^w}<\infty.
$$
By Proposition \ref{p1.57B} (1) and \eqref{est3.55} it follows now that there are positive constants $C_1$ and $C_2$ which are independent of $J$ such that
\begin{equation*}
\nm a{s_p^w} \ge C_1|(a,b_J)|\ge C_2 \nm a{L^p(\Omega _J)}.
\end{equation*}
By letting $J$ increase to $I_0$ we therefore obtain $\nm a{s_p^w}\ge
C_2 \nm a{L^p(\Omega _{I_0})}=\infty$, which proves the assertion.

\medspace

It remains to prove \eqref{eqn:11} and \eqref{formula2.4}. From
Lemma~\ref{lemma:1} we have that
\begin{align}
&\sup_{X\in \overline U_j}|a|^G_k(X)\le C\Bigl(\sup_{X\in \overline
U_j}|a(X)|+\sup_{Y\in \overline U_j}|a|^G_N(X)\Bigr)
\label{eqn:12}
\\[1ex]
&\quad\le C \Bigl(|a(Y_j)|+\sup_{Y\in \overline
U_j}|a|^G_N(X)\Bigr),\quad\text{for all
$j\in I_0$  and  $k\le N$.} \notag
\end{align}
On the other hand we have
\begin{equation}\label{formula2.2}
|a|^G_{N}(X) = h_g^{N/2}(X)|a|^g_{N}(X) \le Ch_g^{N/2}(X)m(X)
\le C|a(Y_j)|,
\end{equation}
for all $X\in U_j$. From \eqref{eqn:12} and \eqref{formula2.2} we
obtain
\begin{equation}
\sup_{X\in \overline U_j}|a|^G_k(X)\le C |a(Y_j)|,\qquad\text{for all
$j\in I_0$  and  $k\le N$.}  \label{eqn:13}
\end{equation}

\par

Next we consider the Taylor expansion
$$
a(X)=a(Y_j)+\int_0^1a^{(1)}\bigl(Y_j+t(X-Y_j);X-Y_j\bigr)\,dt,
$$
which, together with \eqref{eqn:13}, yields the estimate
$$
|a(Y_j)|\le |a(X)|+ C|a(Y_j)|G_{X_j}(X-Y_j)^{1/2},
$$
for all $X\in U_j$ and $j\in I_0$. But then we can choose $\ep_1>0$
so small that
$$
|a(Y_j)|\le 2 |a(X)|,\quad\text{for $X\in U_j$,
$G_{X_j}(X-Y_j)<\ep_1^2$ and $j\in I_0$.}
$$
Let $\ep_2=\sqrt c$, with $c$  as   in Remark~\ref{remfeasible} and
define
\begin{gather*}
X_j^0=\frac 12\,\frac {\ep_1}{\ep_1+\ep_2}\,X_j+\frac 12\,\frac
{\ep_1+2\ep_2}{\ep_1+\ep_2}\,Y_j
\intertext{and}
r_0=\frac 14\,\frac {\ep_1\ep_2}{\ep_1+\ep_2}.
\end{gather*}
Then it is  easy to check that \eqref{eqn:11} and \eqref{formula2.4}
are satisfied and this completes the proof.
\end{proof}

\par

\subsection{Necessary and sufficient conditions for symbols  in  other
pseudo-differential calculi}\rule{0pt}{0pt}

\medspace

In this subsection we extend the the results of
Subsection~\ref{subsec21} to other  calculi of pseudo-differential
operators, whose definition is a natural generalization of the Weyl
quantization \eqref{t-op}$'$.

\par

As in the definition of the Weyl quantization given in
Subsection~\ref{SympVS}, let $V$ be a real vector space of finite
dimension $n$, $V'$ its dual space, and   $W=V\times V'$ the
symplectic vector space with the symplectic form \eqref{SympForm}.
Let $t\in \mathbf R$ be fixed, and assume that
$a\in \mathscr S(W)$. Then the pseudo-differential operator
$\Op_t (a)$ is defined by the formula \eqref{t-op}
when $f\in \mathscr S(V)$. We recall that the operator $\Op_t(a)$ is
continuous on
$\mathscr S(V)$, and the definition of $\Op_t(a)$ extends to each
$a\in \mathscr S'(W)$, and then $\Op_t(a)$ is a continuous
operator from $\mathscr S(V)$ to $\mathscr S'(V)$. Moreover,
the map $a\mapsto \Op_t(a)$ from $\mathscr S'(W)$ to the set of
linear and continuous operators from $\mathscr S(V)$ to $\mathscr
S'(V)$ is bijective. (See \cite {Ho3}.) 

\par

We note that $a(x,D)=\Op_0(a)$ is the \emph{standard representation} 
(Kohn-Nirenberg representation) and $\Op ^w(a)=\Op_{1/2}(a)$ is the Weyl
quantization. We also recall that if $s,t\in \mathbf R$ and $a,b\in
\mathscr S'(W)$ are arbitrary, then
\begin{equation}\label{stformula}
\Op_s(a)=\Op_t(b)\quad \Longleftrightarrow \quad a(X)=e^{i(s-t)\Phi
(D)}b(X),
\end{equation}
where $\Phi(X)=\scal x \xi$, $X=(x,\xi)\in V\times V'$, and the
right-hand side of \eqref{stformula} is equivalent to
$$
e^{i(s-t)\Phi(X)}\mathscr F b(X)=\mathscr a(X).
$$
(See the introduction for the definition of the Fourier transform $\mathscr F$.) In particular, $e^{it \Phi(D)}$ is a bijective and continuous mapping on $\mathscr S(W)$ which extends
uniquely to bijective and continuous mapping on $\mathscr
S'(W)$, and to a unitary operator on $L^2(W)$.

\par

The extension of the symbolic calculus to  pseudo-differential
operators of the kind \eqref{t-op} requires that the metric $g$ has to
be \emph{splitted} (see \cite{BC}), i.{\,}e. $g$ should satisfy  the
following identity
\begin{equation}\label{gdiag}
g_X(z,\zeta ) = g_X(z,-\zeta ),
\end{equation}
for all $X\in W$,  $z\in V$, and $\zeta\in V'$. (cf. the discussion
after Theorem 18.5.5 and before Theorem 18.5.10 in \cite{Ho3}.),
Observe that \eqref{gdiag} is  equivalent to 
\begin{equation}
g_X(z,\zeta)=g_{1,X}(z)+g_{2,X}(\zeta),  \label{gsplitted}
\end{equation}
where $g_1$ and $g_2$ are positive definite quadratic forms on $V$ and
$V'$ respectively.

\par

The diagonalization of the metric assume a special form when $g$ is
splitted. Recall that the definition of splitted symplectic basis is
given in Remark~\ref{splittedbasis}.

\par

\begin{lemma}\label{G0lemma}
Assume that $g$ is splitted on $W=V\times V'$. Then for all $X\in W$
there exists a splitted symplectic basis
$e_1,\ldots,e_n,\ep_1,\ldots,\ep_n$ such that
$$
g_X(Z)=\sum_{j=1}^n\lambda_j(X)(z_j^2+\zeta_j^2)
$$
for all $Z=(z,\zeta)=\sum_{j=1}^n(z_j e_j+\zeta_j \ep_j)$.
\end{lemma}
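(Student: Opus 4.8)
The plan is to reduce the lemma to the classical simultaneous diagonalization of two positive definite quadratic forms, taking care that the diagonalizing change of basis is of the special ``splitted'' type.

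First I would use \eqref{gsplitted} to write $g_X(z,\zeta)=g_{1,X}(z)+g_{2,X}(\zeta)$, where $g_{1,X}$ and $g_{2,X}$ are positive definite quadratic forms on $V$ and $V'$ respectively. Fix once and for all an arbitrary splitted symplectic basis, that is, any basis of $V$ together with its dual basis in $V'$ (cf.\ Remark \ref{splittedbasis}); in the corresponding coordinates let $A$ and $B$ be the symmetric positive definite matrices representing $g_{1,X}$ and $g_{2,X}$, and observe that the duality pairing is then the standard one. A change of basis $e_j=\sum_iP_{ij}f_i$ on $V$, with $P$ invertible, forces the dual change $Q=(P^{-1})^T$ on $V'$ precisely in order that the new basis $e_1,\dots,e_n,\ep_1,\dots,\ep_n$ stays splitted symplectic, and under it $g_{1,X}$ and $g_{2,X}$ are represented by $P^TAP$ and $Q^TBQ=P^{-1}B(P^{-1})^T$. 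Hence the lemma is equivalent to the existence of an invertible $P$ and positive numbers $\lambda_1,\dots,\lambda_n$ with $P^TAP=\operatorname{diag}(\lambda_1,\dots,\lambda_n)$ and $P^{-1}B(P^{-1})^T=\operatorname{diag}(\lambda_1,\dots,\lambda_n)$; the second identity is the same as $P^TB^{-1}P=\operatorname{diag}(\lambda_1^{-1},\dots,\lambda_n^{-1})$.

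Next I would carry out this simultaneous diagonalization. Diagonalizing the symmetric positive definite matrix $A^{-1/2}B^{-1}A^{-1/2}$ by an orthogonal matrix $O$, say $A^{-1/2}B^{-1}A^{-1/2}=O\operatorname{diag}(d_1,\dots,d_n)O^T$ with all $d_j>0$, and setting $P_0=A^{-1/2}O$, one gets $P_0^TAP_0=I$ and $P_0^TB^{-1}P_0=\operatorname{diag}(d_1,\dots,d_n)$. Now put $\lambda_j=d_j^{-1/2}$ and $P=P_0\operatorname{diag}(d_1^{-1/4},\dots,d_n^{-1/4})$. Then $P^TAP=\operatorname{diag}(d_j^{-1/2})=\operatorname{diag}(\lambda_j)$ and $P^TB^{-1}P=\operatorname{diag}(d_j^{-1/2}d_j)=\operatorname{diag}(d_j^{1/2})=\operatorname{diag}(\lambda_j^{-1})$, as wanted. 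Taking $e_j$ to be the images under $P$ of the fixed basis of $V$ and $\ep_j$ the dual basis, Remark \ref{splittedbasis} ensures $e_1,\dots,\ep_n$ is a splitted symplectic basis, and in the associated coordinates $Z=(z,\zeta)$ one obtains $g_X(Z)=\sum_j\lambda_jz_j^2+\sum_j\lambda_j\zeta_j^2=\sum_j\lambda_j(z_j^2+\zeta_j^2)$. After reordering the $\lambda_j$ so that $\lambda_1\ge\cdots\ge\lambda_n$, these coincide with the functions $\lambda_j(X)$ appearing in \eqref{e1.0}, since the latter depend only on $g_X$ (see Proposition \ref{massinv}\,(1) and the comment following \eqref{e1.00}).

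The only genuine subtlety is the constraint observed in the second step: one is not free to use an arbitrary symplectic transformation, only one of the split form $(P,(P^{-1})^T)$, so a plain simultaneous diagonalization of $g_{1,X}$ and $g_{2,X}$ does not suffice. Instead one diagonalizes $g_{1,X}$ together with the dual form of $g_{2,X}$, and the diagonal rescaling by the factors $d_j^{-1/4}$ is exactly what is needed so that the two resulting diagonal forms acquire the common coefficients $\lambda_j$. Everything else is elementary linear algebra.
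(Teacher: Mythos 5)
Your proof is correct and takes essentially the same route as the paper: simultaneously diagonalize $g_{1,X}$ and the (contragradiently transformed) $g_{2,X}$, then rescale the splitted symplectic basis by diagonal factors $d_j^{-1/4}$ (the paper's $\mu_j^{\pm 1/4}$) so that both forms acquire the common coefficients $\lambda_j$. You merely make explicit the linear algebra behind the paper's terse appeal to ``diagonalizing two quadratic forms,'' identifying which two forms can be jointly diagonalized given that the $V'$-basis is forced to transform by $(P^{-1})^T$.
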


\par

\begin{proof}
Since it is well-known that it is possible to diagonalize two
quadratic forms, it follows from \eqref{gsplitted} that there exists a
splitted symplectic basis  $\tilde e_1,\ldots, \tilde e_n,\tilde
\ep_1,\ldots,\tilde \ep_n$ such that
$$
g_X(Z)=\sum_{j=1}^n (\tilde z_j^2+ \mu _j(X)\tilde \zeta_j^2),
$$
where $Z=\sum_{j=1}^n(\tilde z_j \tilde e_j+\tilde \zeta_j \tilde
\ep_j)$, and $\mu_1\ge \cdots\ge \mu_n>0$. Then it suffices to set
$\lambda_j=\mu_j^{1/2}$, $e_j=\mu^{1/4}\tilde e_j$, and
$\ep_j=\mu_j^{-1/4}\tilde \ep_j$, for $j=1,\ldots,n$.
\end{proof}

\par

The following proposition is contained in  Proposition 18.5.10 of
\cite{Ho3}.

\par

\begin{prop}\label{prop:1}
Assume that $g$ is strongly feasible and splitted, and  that
$m$ is $g$-continuous and $(\sigma,g)$-temperate. Also assume that
$t\in\mathbf R$. Then $e^{i\, t\Phi (D)}$ on $\mathscr S'(W)$
restricts to a homeomorphism on $S(m,g)$.
Furthermore, for every integer $N\ge
0$ and $a \in S(m,g)$ it holds
\begin{equation}\label{eq18.5.10}
e^{it\Phi (D)}a -\sum _{k<N}
\bigl(it\Phi(D)\bigr)^ka/k! \in S(h_g^Nm,g).
\end{equation}
\end{prop}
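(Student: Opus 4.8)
The plan is to split the assertion into the homeomorphism claim and the asymptotic expansion \eqref{eq18.5.10}, and to reduce the latter to the former. The expansion follows once one knows: \emph{(a)} $\Phi(D)$ maps $S(m,g)$ continuously into $S(h_gm,g)$, so that $\Phi(D)^N$ maps $S(m,g)$ into $S(h_g^Nm,g)$; and \emph{(b)} for every $g$-continuous and $(\sigma,g)$-temperate weight $m'$, the map $e^{it\Phi(D)}$ sends $S(m',g)$ continuously into itself, with seminorm bounds that stay bounded as $t$ ranges over any bounded set. Indeed, the integral form of Taylor's formula applied to $e^{iu}$ with $u=t\Phi(\Xi)$, transported through the Fourier transform, gives
\[
e^{it\Phi(D)}a-\sum_{k<N}\frac{(it\Phi(D))^k}{k!}\,a
=\frac{(it)^N}{(N-1)!}\int_0^1(1-s)^{N-1}e^{ist\Phi(D)}\bigl(\Phi(D)^Na\bigr)\,ds ,
\]
and since $h_g^Nm$ is again $g$-continuous and $(\sigma,g)$-temperate (by Proposition~\ref{massinv} and the stability of these properties under products), step \emph{(a)} puts $\Phi(D)^Na$ into $S(h_g^Nm,g)$ and step \emph{(b)}, applied with $m'=h_g^Nm$, keeps it there after $e^{ist\Phi(D)}$ is applied, uniformly in $s\in[0,1]$; hence the right-hand side lies in $S(h_g^Nm,g)$, while the remaining sum lies in $S(m,g)$ because $h_g\le1$. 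For the homeomorphism claim, \eqref{stformula} shows that the inverse of $e^{it\Phi(D)}$ on $\mathscr S'(W)$ is $e^{-it\Phi(D)}$, and the hypotheses on $g$ and $m$ do not mention $t$; so it suffices to prove \emph{(b)} with $m'=m$ and invoke it for $t$ and for $-t$.

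It then remains to prove \emph{(a)} and \emph{(b)}. Fix $X_0\in W$. By Lemma~\ref{G0lemma} there is a splitted symplectic basis $e_1,\dots,e_n,\ep_1,\dots,\ep_n$ of $W=V\times V'$ --- with $\{e_k\}$ a basis of $V$ and $\{\ep_k\}$ its dual basis of $V'$ --- such that $g_{X_0}\bigl(\sum_k(z_ke_k+\zeta_k\ep_k)\bigr)=\sum_k\lambda_k(X_0)(z_k^2+\zeta_k^2)$. Because the two bases are dual, $\Phi(X)=\scal x\xi=\sum_kz_k\zeta_k$ in these coordinates, hence $\Phi(D)=\sum_kD_{z_k}D_{\zeta_k}$; since $D_{z_k}D_{\zeta_k}$ costs a factor $\lambda_k(X_0)\le\lambda_1(X_0)=h_g(X_0)$ relative to $g_{X_0}$-unit directions, a routine computation with a slowly varying family of such bases, using the $g$-continuity of $h_g$ and of $m$, gives \emph{(a)}. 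For \emph{(b)} I would use the explicit oscillatory-integral representation of $e^{it\Phi(D)}$: in the $V\times V'$ splitting,
\[
e^{it\Phi(D)}b(X)=(2\pi|t|)^{-n}\iint b\bigl(X-(y,\eta)\bigr)\,e^{-i\scal y\eta/t}\,dy\,d\eta ,\qquad t\neq0 ,
\]
a rescaled partial Fourier transform (cf.\ \eqref{stformula}). I would localize $b$ by a partition of unity $\{\fy_j\}$ as in Remark~\ref{remsetfeasible}, subordinate to a covering $\{U_j\}$ as in Remark~\ref{remfeasible}; on $U_j$ freeze the metric to $g_{X_j}$, pass to coordinates as above, and split the $Y$-integral into the region where $g^\sigma_{X_j}(X-Y)\le R$ and its complement. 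On the first region one Taylor-expands $b$ at $X$ and estimates term by term as in \emph{(a)}. On the complement the phase $\scal y\eta/t$ is non-stationary, so repeated integration by parts yields decay $(1+g^\sigma_{X_j}(X-Y))^{-M}$ for every $M$; here the $\sigma$-temperance of $g$ and the $(\sigma,g)$-temperance of $m'$ are exactly what allow one to dominate $g_Y$, $g^\sigma_Y$ and $m'(Y)$ by $g_X$, $g^\sigma_X$ and $m'(X)$ times fixed powers of $1+g^\sigma_X(X-Y)$, making the remaining integral converge. Differentiating under the integral sign in the same manner, and re-summing over $j$ with the bounded overlap of the $U_j$, yields $\nm{e^{it\Phi(D)}b}{m',N'}^g\le C_{N',t}\nm b{m',N}^g$ for a suitable $N=N(N')$, with $C_{N',t}$ bounded for $t$ in bounded sets; this is \emph{(b)}.

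The main obstacle is the genuinely non-local, purely oscillatory nature of $e^{it\Phi(D)}$: its Schwartz kernel $|t|^{-n}e^{-i\scal y\eta/t}$ has neither compact support nor any decay by itself, so every tail estimate must be produced by non-stationary-phase integration by parts, and this is precisely the point where one cannot drop the hypotheses that $g$ be $\sigma$-temperate and $m$ be $(\sigma,g)$-temperate --- without them the comparison of $g_X,g^\sigma_X,m(X)$ with $g_Y,g^\sigma_Y,m(Y)$ over the whole range of integration breaks down. A more technical nuisance is the careful bookkeeping, after the anisotropic rescaling that normalizes $g_{X_j}$, of how exactly one power of $h_g$ is gained per application of $\Phi(D)$; this is where the splitted hypothesis \eqref{gdiag} is indispensable, since it is what makes the single basis supplied by Lemma~\ref{G0lemma} diagonalize $g_{X_j}$ and simultaneously reduce $\Phi$ to $\sum_kz_k\zeta_k$. (Alternatively, one may follow the proof of Proposition~18.5.10 in \cite{Ho3}, where the splitted hypothesis is used to reduce to a product metric and the relevant Fresnel integrals are evaluated directly.)
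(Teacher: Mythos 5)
The paper offers no proof of this proposition; it simply records that the result is contained in Proposition~18.5.10 of \cite{Ho3}, which you yourself invoke at the end of your sketch. Your outline is a correct reconstruction of the argument given there --- the reduction to the two facts (a) and (b), the integral form of Taylor's remainder recasting \eqref{eq18.5.10} as an application of (a) and (b) together with the $(\sigma,g)$-temperance of $h_g^Nm$, and the Fresnel-kernel plus non-stationary-phase estimates behind (b), where $\sigma$-temperance and the splitted hypothesis play exactly the roles you identify --- so it takes essentially the same route as the source the paper cites.
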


\par

Now we can state the extension of Theorem~\ref{corthm12}:

\par

\begin{thm}\label{corthm13}
Assume that $p\in [1,\infty ]$, $g$ is strongly feasible and splitted,
$m$ is $g$-continuous and
$(\sigma,g)$-temperate, and that $a\in S(m,g)$. Then the following is
true:
\begin{enumerate}
\item if $h_g^{N/2}m\in L^p(W)$ for some $N\ge 0$, then $a\in
s_{t,p}(W)$ if and only if $a\in L^p(W)$;

\vrum

\item if $h_g^{N/2}m\in L^\infty _0(W)$ for some $N\ge 0$, then
$a\in s_{t,\sharp} (W)$ if and only if $a\in L^\infty _0(W)$.
\end{enumerate}
\end{thm}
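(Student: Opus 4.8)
The plan is to deduce Theorem~\ref{corthm13} from Theorem~\ref{corthm12} by conjugating with the operator $e^{it\Phi(D)}$ that intertwines the Weyl calculus with the $t$-calculus. Recall from \eqref{stformula} that $\Op_t(a)=\Op^w(b)$ where $b=e^{i(1/2-t)\Phi(D)}a$, so $a\in s_{t,p}(W)$ if and only if $b\in s^w_p(W)$, and likewise $a\in s_{t,\sharp}(W)$ iff $b\in s^w_\sharp(W)$. Thus it suffices to show that, under the stated hypotheses, (i) $b\in S(m,g)$, (ii) $h_g^{N'/2}m\in L^p(W)$ for some $N'\ge 0$ (which is already part of the hypothesis, with $N'=N$), and (iii) $b\in L^p(W)$ if and only if $a\in L^p(W)$ (respectively $b\in L^\infty_0(W)$ iff $a\in L^\infty_0(W)$). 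Granting these three points, Theorem~\ref{corthm12} applied to $b$ gives $b\in s^w_p(W)\Leftrightarrow b\in L^p(W)\Leftrightarrow a\in L^p(W)$, which is exactly the assertion.

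First I would record that since $g$ is strongly feasible and splitted and $m$ is $g$-continuous and $(\sigma,g)$-temperate, Proposition~\ref{prop:1} applies with $t$ replaced by $1/2-t$: the operator $e^{i(1/2-t)\Phi(D)}$ restricts to a homeomorphism on $S(m,g)$, so $b=e^{i(1/2-t)\Phi(D)}a\in S(m,g)$, giving (i). This is the step where the extra hypotheses on the metric (splitted) and on the weight ($(\sigma,g)$-temperate) relative to Theorem~\ref{corthm12} are genuinely used. The hard part is (iii): the operator $e^{i(1/2-t)\Phi(D)}$ does not obviously preserve $L^p$ membership for a general symbol, so one cannot argue abstractly. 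Instead I would use the asymptotic expansion \eqref{eq18.5.10}: for every $N'\ge0$,
$$
b - \sum_{k<N'}\frac{\bigl(i(1/2-t)\Phi(D)\bigr)^k a}{k!}\in S(h_g^{N'}m,g).
$$
Each term $(\Phi(D))^k a$ is a finite linear combination of derivatives $\partial_x^\alpha\partial_\xi^\beta a$ with $|\alpha|=|\beta|=k$, hence lies in $S(h_g^k m,g)$ by definition of $S(m,g)$ and the identity $|a|^g_j = h_g^{j/2}|a|^{g}_{\,\cdot}$-type bounds used already in \eqref{formula2.2}; more precisely $\partial_x^\alpha\partial_\xi^\beta a\in S(h_g^{|\alpha|+|\beta|}m,g)$ when the derivatives are taken in $g^{{}_0}$-orthonormal splitted coordinates, because differentiating $k$ times in directions of $g_X$-length comparable to $h_g(X)^{-1/2}$ costs a factor $h_g(X)^{k/2}$ each. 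So modulo a remainder in $S(h_g^{N'}m,g)$, $b$ differs from $a$ by a sum of symbols each in $S(h_g m,g)\subseteq S(m,g)$.

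Now choose $N'$ large enough that $h_g^{N'/2}\cdot(h_g m)\in L^p$, i.e. $h_g^{(N'+2)/2}m\in L^p$; this is possible since $h_g\le1$ and $h_g^{N/2}m\in L^p$ by hypothesis, so raising the exponent keeps the function in $L^p$ (it decreases pointwise and stays dominated). Then both the remainder $b-\sum_{k<N'}\cdots$ and each correction term $\frac{1}{k!}(i(1/2-t)\Phi(D))^k a$ for $1\le k<N'$ lie in a class $S(h_g^{\ell}m,g)$ with $h_g^{\ell/2}m\in L^p$ for a suitable $\ell\ge1$... in fact it is cleaner to note each such term lies in $S(h_g m,g)$ with $h_g^{N/2}(h_g m)=h_g^{(N+2)/2}m\in L^p$, so each term is pointwise bounded by a constant times $h_g m\le m$ and, being an element of $S(h_g^{j}m,g)$ with $h_g^{j}m\in L^p$ on the nose (for $j\ge1$ after absorbing, using $h_g\le1$), each correction term is itself in $L^p(W)$ (respectively $L^\infty_0(W)$). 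Consequently $b-a$ is a finite sum of $L^p$ (resp. $L^\infty_0$) functions, whence $b\in L^p\Leftrightarrow a\in L^p$ (resp. $b\in L^\infty_0\Leftrightarrow a\in L^\infty_0$), which is (iii). Combining (i)--(iii) with Theorem~\ref{corthm12} completes the proof; the main obstacle, as indicated, is making precise the claim that the correction terms and the remainder in \eqref{eq18.5.10} are automatically in $L^p$ (resp. $L^\infty_0$), which is where one must exploit $h_g\le1$ together with the $L^p$-hypothesis on $h_g^{N/2}m$ to push the symbol-class gain $h_g$ into integrability.
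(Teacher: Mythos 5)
Your reduction to the Weyl case via \eqref{stformula}, the use of Proposition~\ref{prop:1} to put $b=e^{i(1/2-t)\Phi(D)}a$ back in $S(m,g)$, and the appeal to the asymptotic expansion \eqref{eq18.5.10} all match the paper's route (the paper packages these steps into Proposition~\ref{prop18.5.10}). Points (i) and (ii) of your outline are fine. The gap is precisely at the step you flag as ``the main obstacle'': you claim each correction term $\tfrac{1}{k!}\bigl(i(1/2-t)\Phi(D)\bigr)^k a$, $1\le k<N'$, is automatically in $L^p$ because of its symbol-class pointwise bound. But $\Phi(D)^k a\in S(h_g^{k}m,g)$, so the pointwise bound is $|\Phi(D)^k a|\le Ch_g^{k}m$; this yields $L^p$ membership only if $h_g^{k}m\in L^p$. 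Since $h_g\le1$, raising the power of $h_g$ makes the weight \emph{smaller} pointwise, so the hypothesis $h_g^{N/2}m\in L^p$ gives nothing at all about $h_g^{k}m$ when $k<N/2$ (take $m\equiv1$ and $h_g(X)\asymp(1+|X|)^{-1}$: then $h_g^{N/2}\in L^p(\rr{2n})$ for $N$ large, but $h_g^{k}\notin L^p$ for $1\le k\le 2n/p$). Your phrase ``lies in $S(h_g^\ell m,g)$ with $h_g^{\ell/2}m\in L^p$'' is the hypothesis of Theorem~\ref{corthm12}, which gives $s_p^w\Leftrightarrow L^p$, not unconditional $L^p$ membership; and ``$h_g^{j}m\in L^p$ on the nose for $j\ge1$ after absorbing $h_g\le1$'' is exactly the direction that fails.

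What actually closes the gap — and what the paper's proof of Proposition~\ref{prop18.5.10} does — is a Gagliardo--Nirenberg/Landau-type inequality that dominates the $L^p$ norms of \emph{intermediate} derivatives by the $L^p$ norm of the function together with the $L^p$ norm of the top-order derivatives. Concretely, one localizes by a partition of unity $a_j=\fy_j a$ subordinate to the $g$-balls $U_j$, uses coordinates diagonalizing $G_{X_j}$ (Lemma~\ref{G0lemma}), and applies Lemma~A.1 (equivalently, Theorem~4.13 of \cite{Ad}) to obtain
\begin{equation*}
\nm{\Phi(D)^k a_j}{L^p}\le C\Bigl(\nm{a_j}{L^p}+\sum_{|\gamma+\delta|=N}\nm{\partial_z^\gamma\partial_\zeta^\delta a_j}{L^p}\Bigr),
\end{equation*}
where the top-order derivatives do carry the good pointwise bound $h_g^{N/2}m$ (this is where $G=h_g^{-1/2}g$ and $h_G=h_g^{1/2}$ enter), and summing over $j$ gives $\nm{\Phi(D)^k a}{L^p}\le C(\nm{a}{L^p}+\nm{h_g^{N/2}m}{L^p})$. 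Without this harmonic-analytic input the intermediate correction terms are not controlled, and your proof of (iii) does not go through.
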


\par

Theorem \ref{corthm13} is an immediate consequence of
\eqref{stformula}, Theorem \ref{corthm12} and the following
result. 

\par

\begin{prop}\label{prop18.5.10}
Assume that $p\in [1,\infty]$, $g$ is strongly feasible and splitted,
and that $m$ is $g$-continuous, $(\sigma,g)$-temperate and satisfies
$h_g^{N/2}m \in L^p(W)$ ($h_g^{N/2}m \in L^\infty _0(W)$) for some
$N\ge 0$. If $t\in \mathbf R$ is fixed, then $e^{i\, t\Phi (D)}$ on
$\mathscr S'(W)$ restricts to a continuous isomorphism on $S(m,g)\cap
L^p(W)$ ($S(m,g)\cap L^\infty _0(W)$).
\end{prop}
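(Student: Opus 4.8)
The plan is to reduce the statement, with the help of Proposition \ref{prop:1}, to an $L^p$-statement about the finitely many terms of the asymptotic expansion \eqref{eq18.5.10}. By Proposition \ref{prop:1}, $e^{it\Phi(D)}$ is already a homeomorphism of $S(m,g)$, and its inverse $e^{-it\Phi(D)}$ is an operator of exactly the same type; hence it suffices to show that $e^{it\Phi(D)}$ maps $S(m,g)\cap L^p(W)$ \emph{into} $L^p(W)$ (respectively into $L^\infty_0(W)$), the inclusion for the inverse then following verbatim, and continuity of the resulting bijection of the Fréchet space $S(m,g)\cap L^p(W)$ following from the closed graph theorem (or from bookkeeping of the constants below). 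So fix $a\in S(m,g)\cap L^p(W)$ and an integer $K>N/2$, and write, using \eqref{eq18.5.10},
\[
e^{it\Phi(D)}a=\sum_{k<K}\frac{(it\Phi(D))^ka}{k!}+r_K,\qquad r_K\in S(h_g^Km,g).
\]
Since $g$ is strongly feasible we have $h_g\le 1$, so $h_g^Km\le h_g^{N/2}m\in L^p(W)$ and $\abp{r_K}\le Ch_g^Km$ forces $r_K\in L^p(W)$ (resp.\ $L^\infty_0(W)$); the term $k=0$ is $a$ itself; and for $k\ge N/2$ the term $\Phi(D)^ka$ lies in $S(h_g^km,g)\subseteq L^p(W)$ by the same comparison. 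Thus the whole matter reduces to showing $\Phi(D)^ka\in L^p(W)$ (resp.\ $L^\infty_0(W)$) for $1\le k<N/2$.

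For these $k$ I would argue pointwise. Since $g$ is \emph{splitted}, Lemma \ref{G0lemma} gives at each $X$ a splitted symplectic basis diagonalizing $g_X$ with eigenvalues $\lambda_1(X)=h_g(X)\ge\cdots\ge\lambda_n(X)>0$; in the corresponding coordinates $\Phi(D)=\sum_iD_{z_i}D_{\zeta_i}$, so $\Phi(D)^ka(X)$ is a combination of derivatives $\prod_i\partial_{z_i}^{\gamma_i}\partial_{\zeta_i}^{\gamma_i}a(X)$ with $\sum_i\gamma_i=k$, and estimating each by the $g_X$-unit derivatives, together with $\prod_i\lambda_i(X)^{\gamma_i}\le\lambda_1(X)^k$, yields $\abp{\Phi(D)^ka(X)}\le C_kh_g(X)^k|a|^g_{2k}(X)$. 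The key device is then a \emph{scale-invariant, $L^p$-averaged} refinement of Lemmas \ref{fderest}--\ref{lemma:1}: rescaling the interval inequality of Lemma \ref{fderest}, combining it with a one-dimensional Sobolev estimate, and transporting the result to $W$ through the diagonalizing coordinates, one gets for each $X$ and each $g_X$-ball $B_{X,r}=\sets{Y}{g_X(Y-X)<r^2}$ with $r$ below a fixed constant
\[
|a|^g_{2k}(X)\le C\Bigl(r^{-2k}\bigl(|B_{X,r}|^{-1}\textstyle\int_{B_{X,r}}|a|^p\,dY\bigr)^{1/p}+r^{N-2k}\sup_{B_{X,r}}|a|^g_N\Bigr),
\]
and $\sup_{B_{X,r}}|a|^g_N\le C\nm a{m,N}^gm(X)$ by $g$-continuity of $m$. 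The decisive point is the choice $r=r(X)\asymp h_g(X)^{1/2}$ (admissible since $h_g\le 1$): it is tuned precisely so that the gain $h_g^k$ cancels the loss $r^{-2k}$ in the first term, while $h_g^kr^{N-2k}\le h_g^{N/2}$ in the second. Combining the three estimates,
\[
\abp{\Phi(D)^ka(X)}\le C\nm a{m,N}^g\,h_g(X)^{N/2}m(X)+C\,\mathcal Ma(X),\qquad \mathcal Ma(X)=\bigl(|B_{X,r(X)}|^{-1}\textstyle\int_{B_{X,r(X)}}|a|^p\,dY\bigr)^{1/p}.
\]

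The first term lies in $L^p(W)$ by hypothesis. For the second, the balls $B_{X,r(X)}$ vary slowly: by Proposition \ref{massinv} the functions $h_g$ and $\Lambda_g$ are $g$-continuous, and $|B_{X,r(X)}|\asymp h_g(X)^n/\Lambda_g(X)$, so a Fubini and bounded-overlap computation gives $\nm{\mathcal Ma}{L^p}\le C\nm a{L^p}$; hence $\Phi(D)^ka\in L^p(W)$ and $e^{it\Phi(D)}a\in L^p(W)$. The $L^\infty_0$ case is the same with $p=\infty$: $\mathcal Ma(X)$ becomes $\sup_{B_{X,r(X)}}|a|$, which tends to $0$ as $X\to\infty$ since $a\in L^\infty_0(W)$ and, $g$ being strongly feasible, the balls $B_{X,r(X)}$ eventually escape every compact set; with $h_g^{N/2}m\in L^\infty_0(W)$ this gives $\Phi(D)^ka\in L^\infty_0(W)$. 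Tracking the constants yields the asserted continuity, and since $e^{-it\Phi(D)}$ is of the same form, $e^{it\Phi(D)}$ is an isomorphism of $S(m,g)\cap L^p(W)$.

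The main obstacle --- the step on which I would spend most effort --- is the middle part: formulating and proving the scale-invariant $L^p$-interpolation inequality on $g_X$-balls with the correct powers of $r$ on each side (so that the miraculous scale $r(X)\asymp h_g(X)^{1/2}$ is available), and then establishing the $L^p$-boundedness of the maximal operator $\mathcal M$, whose defining balls have slowly but unboundedly varying volume. The crude pointwise bound $\abp{\Phi(D)^ka}\le Ch_g^k|a|^g_{2k}$ alone is useless for $1\le k<N/2$, because the natural weight $h_g^km$ of $\Phi(D)^ka$ need not lie in $L^p(W)$ then; the whole strategy is to trade the missing $L^p$-control on the derivatives for the a priori $L^p$-control on $a$, which is exactly what the averaged interpolation inequality, evaluated at the scale $h_g^{1/2}$, accomplishes.
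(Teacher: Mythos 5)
Your proposal reaches the same conclusion but by a genuinely different route. The paper also reduces the problem, via Proposition \ref{prop:1}, to showing that finitely many terms $\Phi(D)^k a$ lie in $L^p(W)$, but it does so at the level of the intermediate metric $G=h_g^{-1/2}g$ (so that the remainder is in $S(h_G^N m,G)=S(h_g^{N/2}m,G)\subseteq L^p$), and then the $L^p$-estimate for the low-order terms is obtained \emph{locally}: one localizes with the slowly-varying partition of unity $\{\fy_j\}$ at the $G$-scale, pulls each piece $a_j=\fy_j a$ back to a fixed compactly supported function $H_j$ through the splitted symplectic coordinates of Lemma \ref{G0lemma} that diagonalize $G_{X_j}$, applies the $L^p$-Sobolev interpolation inequality of Lemma A.1 (Adams' inequality) to each $H_j$ with constants uniform in $j$, and finally uses the bounded overlap of the $U_j$ to sum the local $L^p$-bounds. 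You instead argue \emph{pointwise}: the crude bound $\abp{\Phi(D)^ka}\le Ch_g^k|a|^g_{2k}$, a scale-invariant $L^p$-averaged refinement of Lemmas \ref{fderest}--\ref{lemma:1} on $g_X$-balls, the tuned choice $r\asymp h_g^{1/2}$, and the $L^p$-boundedness of the resulting maximal operator $\mathcal M$. Both arguments thus hinge on the same Sobolev-interpolation input, but the paper applies it at a fixed scale after localization (trading the scaling issue for a bounded-overlap count), whereas you absorb the scaling into a ball-averaged inequality and a maximal-function estimate. Your route buys a cleaner pointwise inequality and avoids the explicit passage through the pulled-back functions $H_j$, at the cost of having to formulate and prove the scale-invariant averaged interpolation inequality and the $L^p$-boundedness of $\mathcal M$ — which, as you correctly flag, are the nontrivial steps and are not in the paper in this form (though your sketch via Taylor polynomial plus equivalence of norms on polynomials, followed by a Fubini/bounded-overlap computation for $\mathcal M$, is sound). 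One small remark: you only need $1\le k< N/2$ because you expand with the metric $g$; the paper's passage through $G$ makes it state the reduction as $k<N$, which at first glance appears to overshoot the order that Lemma A.1 can reach, and is best read modulo the standing freedom to enlarge $N$ at the outset of the argument.
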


\par

\begin{proof}
We only prove the result for $p<\infty$. The remaining cases follow by
similar arguments and are left for the reader.

\par

We need to prove that  $b=e^{it\Phi (D)}a\in L^p(W)$ whenever $a\in
S(m,g)\cap L^p(W)$. Let $N_0$, $\{ X_j\} _{j\in \mathbf N}$, $\{U_j\}
_{j\in \mathbf N}$ and $G$ be as
in the proof of Proposition \ref{thm2}, and let $\{\fy _j\} _{j\in
\mathbf N}$ be as in Remark \ref{remsetfeasible}. Also let
$\{\psi _j\} _{j\in \mathbf N}$ be a bounded set of non-negative
functions in $S(1,G)$ such that
$\supp \psi _j\subseteq U_j$ and $\psi _j=1$ on $\supp \fy _j$. Then
$G$ is strongly feasible. Since $h_G=h_g^{1/2}$, it follows from
Proposition~\ref{prop:1} that
$$
e^{it\Phi (D)}a -\sum _{k<N}\bigl(it\Phi(D)\bigr)^ka/k! \in
S(h_G^{N}m,G)\subseteq L^p(W).
$$
We therefore need to prove that $\Phi(D)^ka \in L^p(W)$ when $k<N$.

\par

By Lemma~\ref{G0lemma} there exists a splitted symplectic basis
$e_1,\ldots,e_n,\ep_1,\ldots,\ep_n$ such that
$$
G_{X_j}(Z)=\sum_{i=1}^n\lambda_i(X_j)(z_i^2+\zeta_i^2)
$$
for all $Z=(z,\zeta)=\sum_{i=1}^n(z_i e_i+\zeta_i \ep_i)$.
Let $a_j=\fy _ja$ and $j\in \mathbf N$ and
$$
H_j(z_1,\ldots,z_n,\zeta_1,\ldots,\zeta_n)=a_j(X_j+
z_1 e_1+\cdots +z_n e_n + \zeta _1 \ep _1 +\cdots +\zeta _n \ep _n).
$$
By Theorem 4.13 of \cite{Ad} or Lemma A.1 in the appendix there exists
a positive constant $C$ depending only on $N$, $n$ and $p$, and such
that
$$
\nm {\partial _{z}^{\alpha}\partial _{\zeta}^\beta H_j}{L^p}\le 
C\Bigl (\nm
{H_j}{L^p}+\textstyle\sum\limits_{\abp{\gamma+\delta}=N}\nm
{\partial_z^\gamma\partial_\zeta^\delta H_j}{L^p}\Bigr ),
$$
when $|\alpha+\beta |\le N$. 
In particular we obtain
\begin{equation}
\nm {\scal {D_z} {D_\zeta}^k H_j}{L^p}\le 
C\Bigl (\nm
{H_j}{L^p}+\textstyle\sum\limits_{\abp{\gamma+\delta}=N}\nm
{\partial_z^\gamma\partial_\zeta^\delta H_j}{L^p}\Bigr ),
\label{interp}
\end{equation}
for $k<N$.

\par

Since 
\begin{gather*}
\partial_{z_i}H_j(z_1,\ldots,z_n,\zeta_1,\ldots,\zeta_n) =
a^{(1)}(X+Z;e_i)
\intertext{and}
\partial_{\zeta_i}H_j(z_1,\ldots ,z_n,\zeta_1,\ldots
,\zeta_n)=a^{(1)}(X+Z;\ep_i)
\end{gather*}
for $j=1,\ldots,n$, we have
$$
\scal
{D_z}{D_\zeta}^kH_j(z_1,\ldots ,z_n,\zeta_1,\ldots
,\zeta_n)=\Phi(D)^ka_j(X_j+Z)
$$
and 
\begin{align*}
&|\partial_z^\alpha\partial_\zeta^\beta
H_j(z_1,\ldots,z_n,\zeta_1,\ldots,\zeta_n)|
\le C \lambda(X_j)^{\alpha+\beta}m(X_j)
\\[1ex]
&\qquad\le h_G(X_j)^N m(X_j)=h_g(X_j)^{N/2}m(X_j), 
\end{align*}
where $|\alpha+\beta|= N$ and the constant $C$ does not depend on
$(z,\zeta)$ nor on $j\in \mathbf N$.

\par

From \eqref{interp} it follows that
$$
\nm{\Phi(D)^k a_j}{L^p}\le C\Bigl(\nm {a_j}{L^p}+\nm
{h_g^{N/2}m\psi_j}{L^p}\Bigr),
$$
where the constant $C$ does not depend on $j$.

\par

Since there is a bound of overlapping $U_j$ we
therefore obtain
\begin{align*}
\nm {\Phi(D)^ka}{L^p} &\le C_1 \Big (\sum _{j\in \mathbf N}\nm
{\Phi(D)^ka_j}{L^p}^p\Big )^{1/p}
\\[1ex]
&\le
C_2\Big (\sum _{j\in \mathbf N} \Big (\nm {a_j}{L^p}^p+\nm
{h_g^{N/2}m\psi _j}{L^p}^p\Big ) \Big )^{1/p}
\\[1ex]
&\le C_3\big (\nm a{L^p}+\nm {h_g^{N/2}m}{L^p}\big )<\infty ,
\end{align*}
for some constants $C_1$, $C_2$, $C_3$, and the result follows.
\end{proof}

\par

%%%%%%%%%%%%%%%%%%%%%%%%%%%%%%%%%%%%%%%%%%%%%%%%%%%%%%%%%%%%%%%%%%%%%
\section{Further sufficient conditions for symbols  to define
Schatten-von Neumann operators  in the Weyl calculus}\label{sec4}
%%%%%%%%%%%%%%%%%%%%%%%%%%%%%%%%%%%%%%%%%%%%%%%%%%%%%%%%%%%%%%%%%%%%%

\par

In this section  we combine techniques in
\cite{Ho2} with arguments in the proof of Theorem 4.4$'$ in \cite
{To6}. These investigations lead to Theorem \ref{thm3.9} below, where
other sufficient conditions on $N$, $m$ and $g$ comparing to Theorem
4.4$'$ and Proposition 4.5$'$ in \cite {To6} are presented in order for
the embedding
\begin{equation}
S_N(m,g)\cap L^p(W)\subseteq s_p^w(W),\label{eq1}
\end{equation}
should hold. 

Set
\begin{equation}\label{Npconst1}
\kappa _p'=\begin{cases}
[2n(1/p-1/2)]+1,&\text{if  $1\le p<2$,}
\\[1ex]
0, &\text{if $p=2$},
\end{cases} 
\end{equation}
i.{\,}e.  $\kappa _p'=(\kappa _p+1)/2$ when $1\le p<2$ and $\kappa
_p'=\kappa _p=0$ when $p=2$, where the definition of $\kappa _p$ is
given by \eqref{k_p}.

\medspace

The following result, parallel to Proposition
4.5$'${\,}(1) in \cite{To6},  also generalizes Theorem 3.9 in
\cite{Ho2}. Recall
\eqref{varthetadef} for the definition of $\Lambda _g$.

\par

\begin{thm}\label{thm3.9}
Assume that $p\in [1,2]$ and that $N\ge \kappa _p'$. Also assume that
$g$ is slowly varying on $W$, $G=g+g^{{}_{0}}_{\phantom X}${\!\!\!},
and that $m$ is $g$-continuous such that $\Lambda ^{1/p}_Gh_g^{N/2}m
\in L^p(W )$. Then \eqref{eq1} holds.
\end{thm}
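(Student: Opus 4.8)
\textbf{Proof proposal for Theorem \ref{thm3.9}.}

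The plan is to reduce the global estimate \eqref{eq1} to a sum of local estimates over a partition of unity subordinate to a $g$-admissible covering, and then to estimate each local piece using the compactly supported theory of $s^w_p$ from Proposition \ref{p1.57C} together with the refined derivative estimates in Lemmas \ref{bernstein}--\ref{lemma3.8} (which, by the structure of the introduction, supply exactly the $C^N$-type bounds for $\nm \cdot {s^w_p}$ of compactly supported symbols with the sharp number of derivatives $\kappa_p'$). First I would invoke Remarks \ref{remfeasible}--\ref{remsetfeasible} to fix a sequence $\{X_j\}$, balls $U_j$ and a bounded partition of unity $\{\fy_j\}\subset S(1,g)$ with $\sum_j\fy_j=1$, bounded overlap, and $\fy_j$ supported in the $g_{X_j}$-ball $U_j$. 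Then I would write $a=\sum_j a_j$ with $a_j=\fy_j a$, note that $a_j\in S_N(m(X_j),g)$ locally with seminorms controlled uniformly in $j$ (using $g$-continuity of $m$), and observe that by Proposition \ref{interpolprop} and Proposition \ref{p1.57A} it suffices to prove the bound
\begin{equation*}
\nm a{s^w_p}\le C\Big(\sum_{j}\nm{a_j}{s^w_p}^p\Big)^{1/p},
\end{equation*}
which follows from the bounded-overlap property and an almost-orthogonality argument for Weyl operators with symbols in well-separated boxes (this is the standard Cotlar--Stein type ingredient already used in the proofs of Theorem 4.4$'$ and Proposition 4.5$'$ in \cite{To6}).

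Next I would rescale each $U_j$ to a fixed ball by the linear symplectic map diagonalizing $g_{X_j}$ as in \eqref{e1.0}, so that the rescaled symbol $\tilde a_j$ is supported in a fixed ball $B_r(0)$ and lies in $C^N$ with $\nm{\tilde a_j}{C^N}\lesssim h_g(X_j)^{N/2}m(X_j)$, using that one derivative in a $g_{X_j}$-unit direction costs a factor $\lambda_k(X_j)^{1/2}\le h_g(X_j)^{1/2}$, and that the highest derivatives of order exactly $N$ are what produce the $h_g^{N/2}$ factor (lower order derivatives are dominated by $m$ alone, but the worst case is order $N$). Since the symplectic rescaling is norm-preserving on $s^w_p$ by Proposition \ref{p1.57A}, and since $\nm{\tilde a_j}{s^w_p}\le C\nm{\mathscr F_\sigma \tilde a_j}{L^p}$ by Proposition \ref{p1.57C}, the task becomes to bound $\nm{\mathscr F_\sigma \tilde a_j}{L^p}$ in terms of the $L^p$-norm and the $C^N$-norm of $\tilde a_j$. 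Here Lemmas \ref{bernstein}--\ref{lemma3.8} enter: they should give, for a function supported in a fixed ball and of class $C^N$ with $N\ge\kappa_p'$, an estimate of the form $\nm{\mathscr F_\sigma u}{L^p}\le C(\nm u{L^p}+\nm u{C^N})$ — this is essentially a Bernstein/Hausdorff--Young type inequality combined with the observation $2n(1/p-1/2)<N$, which guarantees $\eabs\xi^{-N}\in L^{p'}$ on $\rr{2n}$ so that Hölder plus integration by parts closes the argument. Undoing the rescaling, this yields
\begin{equation*}
\nm{a_j}{s^w_p}\le C\big(\nm{a_j}{L^p}+h_g(X_j)^{N/2}m(X_j)|U_j|^{1/p}\,\Lambda_g(X_j)^{?}\big),
\end{equation*}
where the Jacobian of the rescaling contributes a power of $\Lambda_g(X_j)$; tracking that power carefully is what produces the weight $\Lambda_G^{1/p}$ in the hypothesis (with $G=g+g^{{}_0}$, whose $\Lambda$ is what naturally shows up after the rescaling to the symplectic metric $g^{{}_0}$).

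Finally I would sum over $j$: the first terms sum to $\nm a{L^p}^p$ by bounded overlap, and the second terms sum to $\sum_j (h_g(X_j)^{N/2}m(X_j))^p\Lambda_G(X_j)|U_j|$, which by $g$-continuity of $h_g$, $m$, and $\Lambda_G$ (Proposition \ref{massinv}) is comparable to $\int_W (\Lambda_G^{1/p}h_g^{N/2}m)^p\,dX=\nm{\Lambda_G^{1/p}h_g^{N/2}m}{L^p}^p<\infty$ by hypothesis. Combining gives $\nm a{s^w_p}\le C(\nm a{L^p}+\nm{\Lambda_G^{1/p}h_g^{N/2}m}{L^p})$ and hence \eqref{eq1}. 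The main obstacle I anticipate is twofold: getting the exact power of the Jacobian $\Lambda_G$ right after the anisotropic rescaling (i.e.\ verifying that the natural weight is $\Lambda_G^{1/p}$ and not some other power — this is where the difference from Proposition 4.5$'$ in \cite{To6} lives, and it requires care about whether one rescales to $g^{{}_0}$ or keeps $g$), and making the local-to-global passage rigorous, namely proving the $\ell^p$-superposition estimate $\nm a{s^w_p}\lesssim(\sum_j\nm{a_j}{s^w_p}^p)^{1/p}$, which needs an almost-orthogonality estimate for Weyl operators attached to a $g$-admissible covering and is the technical heart borrowed from \cite{To6}.
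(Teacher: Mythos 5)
Your proposal correctly identifies the overall architecture (partition of unity, bounded overlap, the $\ell^p$-superposition from Corollary 4.2 in \cite{To6}, the role of $\kappa_p'$ in the Bernstein-type lemmas, and the $g$-continuity of $\Lambda_G$ from Proposition \ref{massinv}), but the central step — rescaling each $a_j$ to a fixed ball and applying Proposition \ref{p1.57C} — contains a genuine error. The linear symplectic map that diagonalizes $g_{X_j}$ (as in \eqref{e1.0}) puts $g_{X_j}$ in the form $\sum\lambda_k(z_k^2+\zeta_k^2)$, so it sends $U_j$ to the ellipsoid $\{\sum\lambda_k(z_k^2+\zeta_k^2)<c\}$, not to a fixed Euclidean ball. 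To squash that ellipsoid to a fixed ball you need the additional anisotropic rescaling $z_k\mapsto\sqrt{\lambda_k}\,z_k$, $\zeta_k\mapsto\sqrt{\lambda_k}\,\zeta_k$, whose Jacobian is $\Lambda_g(X_j)\neq 1$; this map is \emph{not} symplectic, so Proposition \ref{p1.57A} gives no control of $\nm{a_j}{s_p^w}$ by $\nm{\tilde a_j}{s_p^w}$. Unlike the $L^2$ case, $s^w_p$ for $p\neq 2$ has no simple transformation rule under general linear changes of variables, so the claim that ``the Jacobian of the rescaling contributes a power of $\Lambda_g(X_j)$'' cannot be salvaged: the Weyl calculus is metaplectically covariant only under \emph{symplectic} maps.

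The paper avoids this obstacle entirely by not rescaling at all. After choosing symplectic coordinates at $X_j$ in which $g^{{}_0}_{X_j}$ is the Euclidean metric, it applies Lemma \ref{lemma3.8} \emph{directly} to $a_j$ (no compact-support normalization is needed — that lemma is a global estimate). The factor $h_{g_j}^{N/2}$ emerges because passing from $|a_j|^{g^{{}_0}_{X_j}}_N$ to $|a_j|^{g_{X_j}}_N$ costs $h_{g_j}^{N/2}$, and the measure factor $\Lambda_{G_j}|U_j|$ comes from the volume of the fattened set $K_j=\{X: g_j(X+Y-X_j)\le c,\ g^{{}_0}_j(Y)\le1\}$ rather than from a rescaling Jacobian. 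So the correct route is to treat Lemma \ref{lemma3.8} as the workhorse local-to-global inequality with the $g^{{}_0}_{X_j}$-unit ball built in, and you should drop the rescaling and the appeal to Proposition \ref{p1.57C}; that proposition is used only in the \emph{proof} of Lemma \ref{bernstein}, which in turn feeds into Lemma \ref{lemma3.8} via Lemma \ref{lemma3.8A} and a modulation-space/interpolation argument that you did not reconstruct.
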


\par

We need some preparation for the proof. The first result is
a generalization of the estimate (3.9) in \cite{Ho2}.

\par

\begin{lemma}\label{bernstein}
Assume that  $p,q\in [1,\infty ]$ are such that $p'\le q$ when $p\ge
2$, and that
$$
N\ge \begin{cases}
\bigl[2n(1/p-1/q')\bigr]+1,&\text{\upshape if $p<2$}, \\
0&\text{\upshape if $p\ge 2$},
\end{cases}
$$
Then there is a constant $C$ such that
\begin{equation}\label{uppsk3}
\nm a{s_p^w}\le C\sum _{j=1}^{2n}\nm {D^N _j a}{L^{q}},
\end{equation}
when $a\in C_0^N(W)$ is supported in a ball of radius one.
\end{lemma}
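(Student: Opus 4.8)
The plan is to reduce the estimate to the local description of $s_p^w$ on compactly supported symbols provided by Proposition \ref{p1.57C}, namely $\nm a{s_p^w}\le C\nm{\mathscr F_\sigma a}{L^p}$ for $a\in\mathscr E'(B_r(0))$, and then to control $\nm{\mathscr F_\sigma a}{L^p}$ by the $L^q$-norms of the high-order derivatives $D_j^N a$ using a Bernstein-type/Hausdorff--Young argument. Concretely, first I would dispose of the easy range $p\ge 2$: there $N=0$ is allowed, and the claim is just $\nm a{s_p^w}\le C\sum_j\nm a{L^q}=C'\nm a{L^q}$; since $a$ is supported in a fixed ball one has $\nm a{L^2}\le C\nm a{L^q}$ for $q\ge p'\ge 2\ge$ (note $q\ge p'$ and on a bounded set the larger exponent dominates after adjusting constants, and $p'\le 2\le q$), and then $\nm a{s_p^w}\le\nm a{s_2^w}=(2\pi)^{-n/2}\nm a{L^2}$ by Proposition \ref{p1.57A} since $p\ge 2$ makes $\nm\cdot{s_p^w}$ decrease. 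So the content is the case $1\le p<2$.

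For $1\le p<2$, set $N_0=\bigl[2n(1/p-1/q')\bigr]+1$, so $N\ge N_0$. By Proposition \ref{p1.57C}, $\nm a{s_p^w}\le C\nm{\mathscr F_\sigma a}{L^p}$, and since the symplectic Fourier transform differs from the ordinary one only by an affine change of variables and a constant, it suffices to bound $\nm{\widehat a}{L^p}$, where $a\in C_0^N$ is supported in a fixed ball $B$. Split the frequency integral into the region $\{|\xi|\le 1\}$ and the region $\{|\xi|>1\}$. On $\{|\xi|\le 1\}$ one has $\nm{\widehat a}{L^p(|\xi|\le 1)}\le C\nm{\widehat a}{L^\infty}\le C'\nm a{L^1}\le C''\nm a{L^q}$ using that $a$ is supported in a fixed ball (Hölder). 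For the region $\{|\xi|>1\}$, I would write $\xi^\gamma\widehat a(\xi)$ as (up to a unimodular constant) the Fourier transform of $\partial^\gamma a$ for any multi-index $\gamma$ with $|\gamma|=N$; since $\partial^\gamma a\in L^q$ and $q\le (q')'$... more precisely $q\in[1,\infty]$ and by Hausdorff--Young applied with the exponent pair $(q,q')$ when $q\le 2$, or directly when $q\ge 2$ using that $\partial^\gamma a$ has fixed compact support so $\nm{\partial^\gamma a}{L^2}\le C\nm{\partial^\gamma a}{L^q}$ for $q\ge 2$, I get $\widehat{\partial^\gamma a}\in L^{q'}$ (interpreting $q'=2$ when $q\ge 2$ suffices as a weaker but adequate bound) with $\nm{\xi^\gamma\widehat a}{L^{q'}}\le C\nm{\partial^\gamma a}{L^q}$. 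Summing over $|\gamma|=N$ gives $\bignm{|\xi|^N\widehat a}{L^{q'}}\le C\sum_j\nm{D_j^N a}{L^q}$. Finally, on $\{|\xi|>1\}$, Hölder with the conjugate exponents $q'/p$ and $(q'/p)'$ yields
\begin{equation*}
\nm{\widehat a}{L^p(|\xi|>1)}\le \Bignm{|\xi|^N\widehat a}{L^{q'}}\cdot\Bignm{|\xi|^{-N}}{L^{s}(|\xi|>1)},
\end{equation*}
where $1/s=1/p-1/q'$, and $\nm{|\xi|^{-N}}{L^{s}(|\xi|>1)}<\infty$ precisely when $Ns>2n$, i.e. $N>2n(1/p-1/q')$, which holds since $N\ge N_0=\bigl[2n(1/p-1/q')\bigr]+1>2n(1/p-1/q')$. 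Combining the two regions gives \eqref{uppsk3}.

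The main obstacle, and the point that needs the most care, is the bookkeeping when $q\ge 2$ (which can occur since the hypothesis only forces $q\ge p'$ when $p\ge 2$, and when $p<2$ there is no upper constraint on $q$ beyond $q\le\infty$): there Hausdorff--Young is not directly available for the pair $(q,q')$, so one instead exploits the fixed compact support of $\partial^\gamma a$ to pass from $L^q$ down to $L^2$ at the cost of a harmless constant, and then runs the argument with $q'$ replaced by $2$; since decreasing $q'$ only weakens the decay needed and $N>2n(1/p-1/2)\ge 2n(1/p-1/q')$ still holds, the integrability $\nm{|\xi|^{-N}}{L^s(|\xi|>1)}<\infty$ is preserved. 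One should also verify at the outset that the constant $C$ in Proposition \ref{p1.57C} — which depends on the radius $r$ and on $n$ — is legitimately uniform here because the supporting ball has radius one, and that the affine symplectic change of variables relating $\mathscr F_\sigma$ and $\mathscr F$ does not spoil the support radius by more than a fixed factor absorbed into $C$.
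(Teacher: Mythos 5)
Your overall route is the same as the paper's: reduce to $\nm{\mathscr F_\sigma a}{L^p}$ via Proposition~\ref{p1.57C}, and control the high-frequency part by trading $N$ derivatives for decay through Hausdorff--Young and H\"older. Your two-piece split $\{|\xi|\le 1\}\cup\{|\xi|>1\}$, combined with the pointwise bound $|\xi|^N\lesssim\sum_j|\xi_j|^N$, is a perfectly serviceable alternative to the paper's $(2n+1)$-fold partition of unity into a ball and $2n$ conical sectors $\{|X|\ge 1,\,|X_j|>|X|/(4n)\}$ (the sectors are there precisely so that $X_j^{-N}\sim|X|^{-N}$ on each piece, so each $\nm{D_j^N a}{L^q}$ appears cleanly; your pointwise inequality does the same job).

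There are two genuine gaps. For $p\ge 2$ you pass through $\nm a{s_p^w}\le\nm a{s_2^w}\sim\nm a{L^2}$ and then want $\nm a{L^2}\le C\nm a{L^q}$ on the fixed ball; that needs $q\ge 2$, but the hypothesis only guarantees $q\ge p'$, and $p'<2$ when $p>2$, so the range $p'\le q<2$ is uncovered. The paper's argument avoids this by applying Hausdorff--Young directly, $\nm{\mathscr F_\sigma a}{L^p}\le C\nm a{L^{p'}}$, and then H\"older on the bounded support to pass from $L^{p'}$ to $L^q$, which only requires $q\ge p'$. The second gap is in your treatment of $q>2$ when $p<2$: after downgrading the Hausdorff--Young pair to $(2,2)$ you need $N>2n(1/p-1/2)$, and you justify this by writing ``$N>2n(1/p-1/2)\ge 2n(1/p-1/q')$''; but the hypothesis only gives $N>2n(1/p-1/q')$, and for $q>2$ that quantity is \emph{strictly smaller} than $2n(1/p-1/2)$ (since $1/q'>1/2$), so the first inequality does not follow --- replacing $q'$ by $2$ in the H\"older step \emph{strengthens}, not weakens, the decay requirement. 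The paper's own remark that ``it is no restriction to assume $q<2$'' has the same soft spot; both arguments are sound precisely for $q\le 2$, which is the only case used downstream (Lemma~\ref{lemma3.8A} invokes this with $q=2$).
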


\par

For the proof we recall that for all $1\le p\le \infty $,
$R>0$, and multi-index $\alpha$
such that $|\alpha |\le N$ we have
\begin{equation}\label{compfknest}
\nm a{L^p}\le (2R)^{|\alpha|}\nm {D^\alpha a}{L^p},\quad\text{\upshape
for all $a\in C^N_0(B_R(X))$.}
\end{equation}

\par

\begin{proof}
We may assume that $a$ is supported in a ball with center at
$X=0$. First assume that $1\le p<2$, and let $a\in C_0^N(B(0))$. By
H{\"o}lder's inequality it follows that is no restriction to assume
that $q<2$, which in particular implies that $q<p'$. Let
$\Omega _0=B_2(0)$ and let
\begin{equation*}
\Omega _j = \sets {X\in \rr {2n}}{|X|\ge 1,\ |X_j|> |X|/(4n)},
\end{equation*}
for $j=1,\dots ,2n$, and choose $\{ \fy _j\} _{j=0,\dots ,2n}\subseteq
S^0_{0,0}$ such that
$\supp \fy _j\subseteq \Omega _j$ and $\sum _{j=0}^{2n}\fy _j=1$. Then
it follows from \eqref{sova} that
$$
\nm a{s_p^w}\le C\nm {\mathscr F_\sigma  a}{L^p} \le C\sum
_{j=0}^{2n}\nm {\fy _j\mathscr F_\sigma  a}{L^p}.
$$

\par

We have to estimate $\nm {\fy _j\mathscr F_\sigma  a}{L^p}$ for
$j=0,\dots ,2n$. First assume that $j=0$. By \eqref{compfknest}, and
Haussdorf-Young's and H{\"o}lder's inequalities it
follows that for some constants $C_1$, $C_2$ and  $C_3$ it holds
\begin{align}\label{Fest11}
\nm {\fy _0\mathscr F_\sigma  a}{L^p}&\le \nm {\fy _0}{L^p}\nm
{\mathscr F_\sigma  a}{L^\infty} \le C_1\nm {\fy _0}{L^p}\nm {a}{L^1}
\\[1ex]
&\le C_2\nm {a}{L^q}\le C_3\sum _{j=1}^{2n}\nm {D^N _j
a}{L^{q}}. \notag
\end{align}
The last inequality follows from \eqref{compfknest}.

\par

Next assume that $j\ge 1$. Since $q'>p$, it follows that we may choose
$r\in [1,\infty]$ such that $1/r=1/p-1/q'$. Then $\psi _j(X)\equiv
X_j^{-N}\fy _j(X)$ belongs to $S^0_{0,0}\cap L^{r}(\rr {2n})$. Hence
by integration by parts and H{\"o}lder's inequality it follows that
\begin{align*}
\nm {\fy _j\mathscr F_\sigma  a}{L^p} &= \Big \| \fy _j \int _{\rr
{2n}} a(X)e^{2i\sigma (\cdo ,X)}\, dX \Big \| _{L^p}
\\[1ex]
&= 2^{-N}\Big \| \psi _j \int _{\rr {2n}}(D_j^Na)(X)e^{2i\sigma (\cdo
,X)}\, dX \Big \|
_{L^p}
\\[1ex]
& \le \nm {\psi _j}{L^{r}}\nm {\mathscr F_\sigma
(D_j^Na)}{L^{q'}}.
\end{align*}
Hence the fact that $q'> 2$ and Haussdorf-Young's inequality give
\begin{equation}\label{Fest12}
\nm {\fy _j\mathscr F_\sigma  a}{L^p} \le C\nm
{D_j^Na}{L^q},\quad\text{for $j=1,\dots ,2n$,}
\end{equation}
where $C=\nm {\psi _j}{L^{r}}$ is finite in view of the
assumptions. The assertion now follows in this case by combining
\eqref{Fest11} and \eqref{Fest12}.

\par

Next assume that $p\ge 2$. Then \eqref{sova} and Haussdorf-Young's and
H\"older's inequalities  give
$$
\nm a{s_p^w}\le C_1\nm {\mathscr F_\sigma  a}{L^p}\le C_2\nm
a{L^{p'}}\le C_3\nm {a}{L^q}.
$$
The assertion now follows from \eqref{compfknest} and the proof is
complete.
\end{proof}

\par

Certain parts and ideas of the next result can be found in the proof
of Lemma 3.8 in \cite {Ho2}. We set, as in \cite{To6},
$$
|a|_{W^p_N(\Omega)}=\sum_{|\alpha|=N}\|\partial ^\alpha
a\|_{L^p(\Omega)},\quad \text{and} \quad
\|a\|_{W^p_N(\Omega)}=\sum_{|\alpha|\le N}\|\partial ^\alpha
a\|_{L^p(\Omega)}.
$$

\par

\begin{lemma}\label{lemma3.8A}
Assume that $\Omega \subseteq \rr {2n}$ is open, bounded and
convex, $p\in [1,\infty ]$, $N\ge \kappa _p'$, and that $\fy \in
C_0^N(\Omega )$. Then there exists a positive constant $C$ such that
\begin{equation}\label{estagain1}
\nm {\fy a}{s_p^w}\le C\Big (\sum _{|\alpha |\le N-1}|a^{(\alpha)}(Y)|
+|a|_{W^\infty _N(\Omega )}\Big ),
\end{equation}
for all $Y\in \Omega$ and all $a\in C^N(\Omega)$.

\par

Furthermore, if $q\in [1,\infty]$, and $\Omega _0\subseteq \Omega$ is
open and non-empty, then
\begin{equation}\label{estagain2}
\nm {\fy a}{s_p^w}\le C\bigl(\nm a{L^q(\Omega _0)}
+|a|_{W^\infty_N(\Omega )}\bigr),
\end{equation}
for all $a\in C^N(\Omega)$.
\end{lemma}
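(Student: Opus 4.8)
The plan is to derive \eqref{estagain1} from Lemma~\ref{bernstein} by means of a partition of unity, and then to deduce \eqref{estagain2} from \eqref{estagain1} by a local Taylor-polynomial argument.

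I would first prove the intermediate estimate $\nm {\fy a}{s_p^w}\le C\nm a{W^\infty_N(\Omega )}$. Since $\supp \fy$ is a compact subset of the bounded set $\Omega$, cover it by finitely many balls $B_{1/2}(X_i)$, $i=1,\dots ,M$, choose $\chi _i\in C_0^\infty (B_{1/2}(X_i))$ with $\chi _i\ge 0$ and $\sum _i\chi _i=1$ on $\supp \fy$, and write $\fy a=\sum _{i=1}^M(\chi _i\fy )a$, where each $(\chi _i\fy )a$ lies in $C_0^N(B_{1/2}(X_i))$. Apply Lemma~\ref{bernstein} to each $(\chi _i\fy )a$ with $q=\infty$; this exponent is admissible under the hypothesis $N\ge \kappa _p'$, since for $p\ge 2$ it only requires $p'\le \infty$ and $N\ge 0$, while for $p<2$ it requires $N\ge [2n(1/p-1)]+1$, which follows from $N\ge \kappa _p'=[2n(1/p-1/2)]+1$ because the integer part is monotone and $1/p-1/2\ge 1/p-1$. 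This gives $\nm {(\chi _i\fy )a}{s_p^w}\le C\sum _{j=1}^{2n}\nm {D_j^N((\chi _i\fy )a)}{L^\infty}$; expanding by Leibniz' rule and using that the finitely many functions $\chi _i\fy$ have derivatives of order $\le N$ that are bounded and supported in $\Omega$, each term is $\le C\nm a{W^\infty_N(\Omega )}$, and summing over $i$ together with the fact that $\nm \cdo {s_p^w}$ is a norm (Proposition~\ref{p1.57A}) yields the claim.

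Next I would use the convexity of $\Omega$ to control the lower-order derivatives of $a$ on $\Omega$ by their values at a single point. For $|\alpha |\le N-1$ and $X,Y\in \Omega$, Taylor expanding $t\mapsto \partial ^\alpha a(Y+t(X-Y))$ to order $N-|\alpha |$ along the segment from $Y$ to $X$, which lies in $\Omega$, gives
\[
|\partial ^\alpha a(X)|\le C\Bigl (\sum _{|\beta |\le N-1}|a^{(\beta )}(Y)|+\sum _{|\beta |=N}\nm {\partial ^\beta a}{L^\infty (\Omega )}\Bigr ),
\]
with $C$ depending only on $N$, $n$ and the diameter of $\Omega$, hence uniformly in $Y$. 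Taking the supremum over $X\in \Omega$, summing over $|\alpha |\le N-1$, and adding the terms with $|\alpha |=N$, we obtain $\nm a{W^\infty_N(\Omega )}\le C\bigl (\sum _{|\beta |\le N-1}|a^{(\beta )}(Y)|+|a|_{W^\infty_N(\Omega )}\bigr )$ for every $Y\in \Omega$; combined with the previous paragraph this is \eqref{estagain1}.

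For \eqref{estagain2}, I would pick $Y_0\in \Omega _0$ and $\rho >0$ with $B_\rho (Y_0)\subseteq \Omega _0$, and let $P$ be the Taylor polynomial of $a$ at $Y_0$ of degree $N-1$. Taylor's formula on the ball $B_\rho (Y_0)$ gives $\nm {a-P}{L^q(B_\rho (Y_0))}\le C\sum _{|\beta |=N}\nm {\partial ^\beta a}{L^\infty (B_\rho (Y_0))}$, so $\nm P{L^q(B_\rho (Y_0))}$ is bounded by $\nm a{L^q(\Omega _0)}+|a|_{W^\infty_N(\Omega )}$; since the coefficients of $P$ are the numbers $a^{(\beta )}(Y_0)/\beta !$, $|\beta |\le N-1$, and all norms on the finite-dimensional space of polynomials of degree $\le N-1$ are equivalent, it follows that $\sum _{|\beta |\le N-1}|a^{(\beta )}(Y_0)|\le C\nm P{L^q(B_\rho (Y_0))}$. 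Substituting this bound into \eqref{estagain1} with $Y=Y_0$ completes the proof. I expect the only slightly delicate points to be the verification that $q=\infty$ is admissible in Lemma~\ref{bernstein} when $N\ge \kappa _p'$ (the short arithmetic check above) and this last passage from an $L^q$-bound on $P$ to bounds on its individual coefficients; the remaining steps are routine.
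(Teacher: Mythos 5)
Your argument is correct, and it departs from the paper's proof in both halves. For \eqref{estagain1}, the paper reduces to a single ball, writes $a = T_{a,N-1} + R_{a,N-1}$ (Taylor polynomial at $Y$ plus remainder), bounds $\nm{\fy\, T_{a,N-1}}{s_p^w}$ directly through the $s_p^w$-norms of $(\cdot - Y)^\alpha\fy$, and controls the remainder via Lemma~\ref{bernstein} with $q=2$ (which is why the threshold $\kappa_p'$ matches exactly). You instead prove the single, cruder estimate $\nm{\fy a}{s_p^w}\le C\nm a{W^\infty_N(\Omega)}$ from Lemma~\ref{bernstein} with $q=\infty$, and then convert $\nm a{W^\infty_N(\Omega)}$ into the desired pointwise form by Taylor-expanding each $\partial^\alpha a$ along segments in the convex set $\Omega$; your arithmetic check that $q=\infty$ is admissible when $N\ge\kappa_p'$ is correct. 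For \eqref{estagain2}, the paper integrates \eqref{estagain1} in $Y$ over a ball $\Omega_1\subset\Omega_0$ and then appeals to the Sobolev-type inequality in Theorem 4.14 of \cite{Ad} (or Lemma A.1 of the appendix) to pass from $\nm a{W^1_{N-1}(\Omega_1)}$ to $\nm a{L^1(\Omega_0)}+|a|_{W^\infty_N(\Omega)}$. You avoid that input entirely by comparing $a$ with its Taylor polynomial $P$ at a fixed $Y_0\in\Omega_0$ and using the equivalence of norms on the finite-dimensional space of polynomials of degree $\le N-1$, which is a more elementary and self-contained route to the same conclusion. Both proofs deliver the lemma under the stated hypotheses; yours trades the paper's reliance on the appendix's $L^p$-Sobolev machinery for a finite-dimensionality argument, at the small cost of a slightly more wasteful invocation of Lemma~\ref{bernstein} (which does not matter here since only $N\ge\kappa_p'$ is assumed).
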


\par

\begin{proof}
By choosing a finite numbers of appropriate open balls $B_k\subset
\Omega $ and $\psi _k\in C_0^\infty (B_k)$, $k\in I$ such that
\begin{gather*}
\supp \fy \subseteq \bigcup _{k\in I} B_k ,\intertext{and}
\sum _{k\in I}\psi
_k=1,\qquad\text{on $\supp \fy$},
\end{gather*}
we reduce ourself to the case that $\supp \fy \subseteq B \subseteq
\Omega$, for some open ball $B$.

\par

Let $Y\in \Omega$ be arbitrary. By Taylor expansion it follows that
$a=b+c$, where
\begin{equation*}
b(X) = T_{a,N-1}(X)\equiv \sum _{|\alpha |\le N-1} \frac {a^{(\alpha
)}(Y)}{\alpha !}(X-Y)^\alpha
\end{equation*}
is the Taylor polynomial of $a$ at $Y$ to the order $N-1$, and
\begin{align*}
c(X) &= R_{a,N-1}(X)
\\[1ex]
&\equiv \sum _{|\alpha |=
N} \frac {N}{\alpha!}\int _0^1 (1-t)^{N-1}a^{(\alpha
)}(Y+t(X-Y))(X-Y)^\alpha\, dt
\end{align*}
is the remainder term. The inequality \eqref{estagain1} follows if we
prove that
\begin{align}
&\nm {\fy b}{s_p^w} \le C\sum _{|\alpha |\le N-1}|a^{(\alpha
)}(Y)|,\label{bcest1}
\intertext{and}
&\nm {\fy c}{s_p^w} \le C|a|_{W^\infty_N(\Omega)},\label{bcest2}
\end{align}
for some constant $C$ which is independent of $Y$ and $a$.

\par

First we prove \eqref{bcest1}. By
straight-forward computations we get
\begin{align*}
\nm {\fy b}{s_p^w} &\le C_1\sum _{|\alpha |\le N-1}\nm {a^{(\alpha
)}(Y)(\cdo -Y)^\alpha \fy}{s_p^w}
\\[1ex]
&= C_1\sum _{|\alpha |\le N-1}|a^{(\alpha )}(Y)|\nm {(\cdo -Y)^\alpha
\fy}{s_p^w} \\
& \le C_2 \sum _{|\alpha |\le N-1}|a^{(\alpha )}(Y)|,
\end{align*}
for some constants $C_1$ and $C_2$. This proves the assertion.

\par

Next we prove \eqref{bcest2}. We have that $\partial ^\alpha
c(Y)=0$ when $|\alpha |\le N-1$, and that $\partial ^\alpha
c(X)=\partial ^\alpha a(X)$ when $|\alpha |= N$, since $c=a-T_{a,N-1}$
and $\partial^\alpha_X(T_{a,N-1})=0$ for $\abp{\alpha}=N$.
Hence for any multi-index $\beta$ such that $\abp{\beta}<N$, it
follows that
\begin{align*}
&\partial ^\beta c (X) = R_{c^{(\beta)},N-\abp{\beta}-1}(X)
\\[1ex]
&\;=\sum_{\abp{\gamma}=N-\abp{\beta}}\hspace{-10pt}\frac
{N-\abp{\beta}}{\gamma!}\int_0^1(1-t)^{N-\abp{\beta}-1}
c^{(\beta+\gamma)}(Y+t(X-Y))(X-Y)^{\gamma}\,dt
\\[1ex]
&\;=\sum_{\abp{\gamma}=N-\abp{\beta}}\hspace{-10pt}\frac
{N-\abp{\beta}}{\gamma!}\int_0^1(1-t)^{N-\abp{\beta}-1}
a^{(\beta+\gamma)}(Y+t(X-Y))(X-Y)^{\gamma}\,dt.
\end{align*}
Hence, there is a constant $C$ which is independent of $Y$ such
that
$$
\nm c{W^{\infty}_N(\Omega )} \le C|a|_{W^\infty _N(\Omega )},
$$
which implies that
$$
|\fy c|_{W^\infty _N(\Omega)} \le C|a|_{W^\infty _N(\Omega )}.
$$
An application of Lemma~ \ref{bernstein} with $q=2$ and H\"older
inequality now give
\begin{equation*}
\nm {\fy c}{s_p^w}\le C_1|\fy c|_{W^2_N(\Omega)}
\le C_2 |\fy c|_{W^\infty _N(\Omega)} \le C_3|a|_{W^\infty
_N(\Omega )},
\end{equation*}
which proves \eqref{bcest2}.

\par

It remains to prove \eqref{estagain2}. By H{\"o}lder's inequality, it
suffices to prove the result for $q=1$, since $\Omega _0$ is
bounded. Let $\Omega _1$ be a non-empty open ball such that 
$\overline{\Omega _1}\subseteq \Omega _0$. By applying the
$L^1(\Omega _1)$-norm
with respect to the $Y$-variables in \eqref{estagain1}, and using
Theorem 4.14 of \cite{Ad} or Lemma A.1 in the appendix, we get
\begin{align*}
\nm {\fy a}{s_p^w} &\le C_1\Big ( \nm a{W^1_{N-1}(\Omega _1)} +
|a|_{W^\infty _N(\Omega )}\Big )
\\[1ex]
&\le C_2 \Big ( \nm a{L^1(\Omega _0)} + |a|_{W^1_{N}(\Omega _0)} +
|a|_{W^\infty _N(\Omega )} \Big )
\\[1ex]
&\le C_3 \Big ( \nm a{L^1(\Omega _0)}
+ |a|_{W^\infty _N(\Omega )} \Big ),
\end{align*}
for some constants $C_1,\dots ,C_3$. This proves \eqref{estagain2}.
\end{proof}

\par

In order to generalize Lemma 3.8 in \cite{Ho2}, it is convenient to
use particular classes of modulation spaces, introduced by
Feichtinger in \cite{Fe}. Assume that $\fy \in \mathscr S(\rr n)
\setminus 0$ is fixed and that $p\in [1,\infty ]$. Then the
(classical) modulation space $M^p(\rr n)$ is
the set of all $f\in \mathscr S'(\rr n)$ such that
$$
\nm f{M^p}\equiv \Big ( \iint |\mathscr F(f\, \fy (\cdo -x))(\xi
)|^p\, dxd\xi \Big )^{1/p}
$$
is finite. (With obvious interpretation when $p=\infty$.) Here recall \eqref{fourtrans}
for the definition of the Fourier transform $\mathscr F$. We note that the
definition of $M^p$ is independent of $\fy \in \mathscr S(\rr
n)\setminus 0$ and that different $\fy$ gives rise to equivalent
norms.

\par

The $M^p$ spaces fulfill the usual (complex)
interpolation properties, i.{\,}e.
\begin{equation}\label{interpolMS}
(M^{p_1},M^{p_2})_{[\theta ]} = M^p,\quad
\frac {1-\theta}{p_1}+\frac {\theta}{p_2}=\frac 1p,\quad 1\le
p_1,p_2<\infty ,\  0\le \theta \le 1.
\end{equation}
(Cf. \cite{BeL, Fe, To3}.)

\par

The next result generalizes Lemma 3.8 in \cite{Ho2}. Here it is
convenient to set
\begin{equation}\label{ha}
|a|_{\Omega ,N}(X) = \sup _{Y\in \Omega,\, \abp \alpha =N}|D^\alpha
a(X+Y)| ,
\end{equation}
when $\Omega \subseteq \rr {2n}$, $N\ge 0$ is an integer and $a \in
C^N(\rr {2n})$.

\par

\begin{lemma}\label{lemma3.8}
Assume that $p\in [1,2]$, and that $N\ge \kappa _p'$. Then there is a
constant $C$ such that 
\begin{equation}\label{uppsk3.8}
\nm a{s_p^w}\le C\bigl ( \nm a{L^p} + \nm {|a|_{B(0),N}}{L^p} \bigr),
\end{equation}
for all $a\in C^N(\rr {2n})$.
\end{lemma}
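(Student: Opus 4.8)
\textbf{Proof proposal for Lemma \ref{lemma3.8}.}
The plan is to reduce the global estimate \eqref{uppsk3.8} to the local estimate \eqref{estagain2} of Lemma~\ref{lemma3.8A} by means of a uniformly locally finite partition of unity adapted to the standard (Euclidean) metric on $\rr{2n}$, and then to sum the local contributions in $\ell^p$. First I would fix a partition of unity $\{\fy_j\}_{j\in\mathbf N}\subseteq C_0^\infty(\rr{2n})$ subordinate to a covering by unit balls $B(X_j)$, with the usual properties: $0\le \fy_j\le 1$, $\supp\fy_j\subseteq B(X_j)$, $\sum_j\fy_j=1$, uniformly bounded derivatives of all orders, and a fixed bound $N_0$ on the number of overlapping balls (this is the Euclidean, and hence slowly varying, special case of Remarks~\ref{remfeasible}--\ref{remsetfeasible} with $g$ the standard metric). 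Since $a=\sum_j\fy_j a$ with the sum locally finite, and since $s_p^w$ is a Banach space, I would estimate $\nm a{s_p^w}$ by controlling $\bigl(\sum_j\nm{\fy_j a}{s_p^w}^p\bigr)^{1/p}$; the passage from $\sum_j\nm{\fy_j a}{s_p^w}$-type control to the $\ell^p$-sum uses that $p\le 2$ together with the embedding properties in Proposition~\ref{p1.57A} (roughly, $\ell^p\hookrightarrow\ell^2\hookrightarrow s_2^w$-type summation, or more directly the fact that for $p\le 2$ one has $\nm{\sum_j b_j}{s_p^w}\le C(\sum_j\nm{b_j}{s_p^w}^p)^{1/p}$ when the $b_j$ have uniformly bounded overlapping supports, which follows by interpolation between the trivial $p=1$ triangle inequality and the $p=2$ case $s_2^w=L^2$).

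Next, for each fixed $j$ I would apply Lemma~\ref{lemma3.8A} with $\Omega=B_2(X_j)$ (open, bounded, convex), $\Omega_0=B(X_j)$, $q=p$, and the cut-off $\fy_j\in C_0^N(\Omega)$, which gives
\begin{equation*}
\nm{\fy_j a}{s_p^w}\le C\bigl(\nm a{L^p(B(X_j))}+|a|_{W^\infty_N(B_2(X_j))}\bigr).
\end{equation*}
The second term is controlled pointwise by $|a|_{B(0),N}$ evaluated at points of $B(X_j)$: indeed $|a|_{W^\infty_N(B_2(X_j))}=\sup_{Z\in B_2(X_j),\,|\alpha|=N}|D^\alpha a(Z)|$, and writing $Z=X+Y$ with $X\in B(X_j)$ and $Y\in B(0)$ covers $B_2(X_j)$, so $|a|_{W^\infty_N(B_2(X_j))}\le \sup_{X\in B(X_j)}|a|_{B(0),N}(X)\le C\nm{|a|_{B(0),N}}{L^\infty(B(X_j))}$. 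To convert this sup-norm on a unit ball into an $L^p$ quantity that sums correctly, I would enlarge slightly: $\nm{|a|_{B(0),N}}{L^\infty(B(X_j))}\le C\nm{|a|_{2B(0),N}}{L^p(B(X_j))}$ fails for free, so instead I would keep the local estimate in the form $\nm{\fy_j a}{s_p^w}\le C(\nm a{L^p(B(X_j))}+\nm{|a|_{B(0),N}}{L^p(B_2(X_j))})$ — which is exactly the version of \eqref{estagain2} one gets by taking $q=p$ and noting $|a|_{W^\infty_N(B_2(X_j))}\le |a|_{W^p_N(B_2(X_j))}\cdot(\text{vol})^{-1/p}$ up to absorbing into the convexity/Sobolev step already inside Lemma~\ref{lemma3.8A}; more honestly, I would apply \eqref{estagain2} with the term $|a|_{W^\infty_N(\Omega)}$ replaced, via an intermediate ball and the trivial bound $|a|_{W^\infty_N}\le C|a|_{\Omega,N}$ restricted appropriately, by the $L^p(B_2(X_j))$-norm of $|a|_{B(0),N}$.

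Finally I would raise to the $p$-th power and sum over $j$:
\begin{equation*}
\sum_j\nm{\fy_j a}{s_p^w}^p\le C\sum_j\Bigl(\nm a{L^p(B(X_j))}^p+\nm{|a|_{B(0),N}}{L^p(B_2(X_j))}^p\Bigr),
\end{equation*}
and since the balls $B(X_j)$ (hence also the $B_2(X_j)$, after doubling the overlap constant) have uniformly bounded overlap $N_0$, each sum on the right is bounded by $N_0\bigl(\nm a{L^p(\rr{2n})}^p+\nm{|a|_{B(0),N}}{L^p(\rr{2n})}^p\bigr)$. Combining with the first paragraph's $\ell^p$-summation inequality for $s_p^w$ yields \eqref{uppsk3.8}. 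The main obstacle is the step in the first paragraph: justifying $\nm{\sum_j b_j}{s_p^w}\le C(\sum_j\nm{b_j}{s_p^w}^p)^{1/p}$ for $1\le p\le 2$ with uniformly finitely overlapping supports. For $p=2$ this is the Hilbert-space orthogonality-up-to-finite-overlap argument using $s_2^w=L^2$ (Proposition~\ref{p1.57A}), for $p=1$ it is the triangle inequality, and for $1<p<2$ I would obtain it by complex interpolation (Proposition~\ref{interpolprop}) between these two endpoints, viewing $\{b_j\}\mapsto\sum_j b_j$ as a bounded operator from the $s_q^w$-valued $\ell^q$ space to $s_q^w$ for $q=1,2$; the finite-overlap hypothesis is what makes the vector-valued source space behave well under interpolation. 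Everything else is bookkeeping with the partition of unity and Lemma~\ref{lemma3.8A}.
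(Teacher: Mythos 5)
Your overall strategy is the same as the paper's — decompose $a$ into compactly supported pieces, apply the local estimate of Lemma~\ref{lemma3.8A} to each, and reassemble via an $\ell^p$/$L^p$-summation inequality for $s_p^w$ proven by interpolation between the trivial $p=1$ endpoint and the $p=2$ endpoint $s_2^w=L^2$. The essential difference is that the paper uses a \emph{continuous} decomposition: it fixes $\fy\in C_0^\infty(B(0))$ with $\int\fy\,dX=1$, sets $a_X(Y)=\fy(Y-X)a(Y)$, and writes $a=\int a_X\,dX$. This is exactly what makes the obstacle you honestly flag in your second paragraph disappear: applying Lemma~\ref{lemma3.8A} to $a_X$ gives $\nm{a_X}{s_p^w}\le C(\nm{a_X}{L^p}+|a|_{B(0),N}(X))$ \emph{pointwise in the continuous parameter $X$}, so raising to the $p$-th power and integrating over $X\in\rr{2n}$ directly yields $\nm{|a|_{B(0),N}}{L^p}$ on the right, with no $L^\infty$-over-a-ball to $L^p$ conversion anywhere. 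In your discrete version, summing $(\sup_{X\in B(X_j)}|a|_{B(0),N}(X))^p$ over $j$ does not come for free from finite overlap, and the sentence beginning ``more honestly, I would apply \eqref{estagain2} with the term $|a|_{W^\infty_N(\Omega)}$ replaced, via an intermediate ball\dots'' does not actually carry out the conversion. There is a genuine fix — shrink the support radius so that $\sup_{Z\in B_r(X_j)}|D^\alpha a(Z)|\le\inf_{X\in B_{1-r}(X_j)}|a|_{B(0),N}(X)$, and bound the infimum by an $L^p$-average — but it is not what you wrote, so as it stands this step is a gap.

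A second, smaller point: the paper does not need your vector-valued interpolation of the map $\{b_j\}\mapsto\sum_j b_j$ with a finite-overlap constraint. Because the decomposition is continuous, the quantity $\bigl(\int\nm{a_X}{s_p^w}^p\,dX\bigr)^{1/p}$ is identified (via Proposition~\ref{p1.57C}) with the modulation space norm $\nm a{M^p}$, and the needed inequality becomes the clean two-Banach-space statement $\nm a{s_p^w}\le C\nm a{M^p}$ for $1\le p\le 2$, which interpolates directly using Proposition~\ref{interpolprop} and \eqref{interpolMS}. Your discrete $\ell^p$-summation inequality (your ``main obstacle'') is in fact true — it is essentially \eqref{agent008}, Corollary~4.2 of \cite{To6}, which the paper invokes later in the proof of Theorem~\ref{thm3.9} — but justifying it by the vector-valued interpolation you sketch, with the finite-overlap hypothesis entering the source space, requires more care than you give it. So your route can be completed, but both of your acknowledged weak points are real and would need the fixes above; the continuous decomposition is precisely the device the paper uses to avoid having to deal with either of them.
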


\par

\begin{proof}
Let $\fy \in C^\infty _0(B(0))$ be fixed such that $\int \fy \,
dX=1$, and set $a_X(Y)=\fy (Y-X)a(Y)$. By Lemma \ref{lemma3.8A} it
follows that
\begin{equation}
\nm {a_X}{s_p^w} \le C \bigl( \nm {a_X}{L^p} + |a|_{B(0),N}(X)
\bigr),\label{uppsk3.82A}
\end{equation}
for some constant $C$. We claim that
\begin{equation}
\nm a{s_p^w} \le C\Bigl ( \int _{\rr {2n}}\nm {a_X}{s_p^w}^p\,
dX\Bigr )^{1/p}.\label{uppsk3.82B}
\end{equation}

\par

Admitting this for a while, we obtain
\begin{align*}
\nm a{s_p^w}
&\le
C_1\Big ( \int _{\rr {2n}}\nm {a_X}{s_p^w}^p\,
dX\Big )^{1/p}
\\[1ex]
&\le C_2  \Bigl ( \int _{\rr {2n}}\bigl(\nm {a_X}{L^p} +
|a|_{B(0),N}(X)\bigr)^p \, dX \Bigr )^{1/p}
\\[1ex]
&\le
C_3 \bigl(\nm a{L^p} + \nm {|a|_{B(0),N}}{L^p}\bigr),
\end{align*}
for some constants $C_1$, $C_2$, $C_3$, and the result follows.

\par

It remains to prove \eqref{uppsk3.82B}. First we note that for some
constant $C$ we have
\begin{equation*}
C^{-1}\nm {\mathscr F_\sigma (a\fy (\cdo -X)}{L^p}\le \nm
{a_X}{s_p^w}\le C\nm {\mathscr F_\sigma (a\fy (\cdo -X)}{L^p}
\end{equation*}
in view of Proposition \ref{p1.57C}, since $\fy$ has compact
support. This implies that
\begin{equation}\label{aXnorm1}
C^{-1}\nm a{M^p}\le \Big (\int \nm {a_X}{s_p^w}^p\, dX\Big )^{1/p} \le
C\nm a{M^p}.
\end{equation}
Hence it suffices to prove that
\begin{equation}\label{spMprelation}
\nm a{s_p^w}\le C\nm a{M^p},\quad \text{when}\quad 1\le p\le 2.
\end{equation}

\par

A proof of \eqref{spMprelation} can be found in \cite{GH,To4}. In
order to be self-contained we present an explicit proof here. First
assume that $p=1$. By \eqref{aXnorm1} we have
\begin{equation*}
\nm{a}{s^w_1}=\Bignm{\int _{\rr {2n}} a_X\,dX}{s^w_1}\le \int _{\rr
{2n}}\nm{a_X}{s^w_1}\, dX \le C\nm a{M^1}.
\end{equation*}
This proves the result in this case. Next we consider the case
$p=2$. We have
\begin{align*}
\nm{a}{s^w_2}^2&=(2\pi)^{-n}\nm{a}{L^2}^2=
(2\pi)^{-n}\int _{\rr {2n}}\Bigl(\int _{\rr
{2n}}\abp{a(Y)\varphi(Y-X)}^2\, dY\Bigr) dX
\\[1ex]
&=(2\pi)^{-n}\int _{\rr {2n}} \nm{a_X}{L^2}^2\,dX=(2\pi)^{-n}\int
_{\rr {2n}} \nm{\mathscr F_\sigma (a_X)}{L^2}^2\,dX \le  C\nm a{M^2}^2,
\end{align*} 
for some constant $C$, and the result follows from this case as well.
The inequality \eqref{spMprelation} now follows for general $p\in
[1,2]$ by interpolation, using Theorem 5.1.2 of \cite{BeL},
Proposition \ref{interpolprop} and \eqref{interpolMS}. This proves the
assertion.
\end{proof}

\par

\begin{proof}[Proof of Theorem \ref{thm3.9}.]
Let $\fy _j$ and $U_j$ for $j\in \mathbf N$ be as in
Remark \ref{remfeasible} and Remark \ref{remsetfeasible}, and let $\{
\psi _j\} _{j\in \mathbf N}$ be a bounded sequence in $S(1,g)$ such that
$\psi _j\in C_0^\infty (U_j)$ and $\psi _j=1$ in the support of $\fy
_j$. Also let $a\in S_N(m,g)\cap L^p(W)$, and
set $a_j =\fy _ja$. For each $j\in \mathbf N$, we choose symplectic
coordinates such that $g_j\equiv g_{X_j}$ attains its diagonal
form. Then $g_{j\phantom X}^{{}_{0}} \equiv  g_{X_j}^{{}_{0}}$ and
$G_j \equiv G_{X_j}$ are also given by their diagonal forms, and these
coordinates form an orthonormal basis for $W$ with respect to
$g_{j}^{{}_{0}}$. Also set
$$
K_j = \sets {X}{g_j(X+Y-X_j)\le c,\ g^{{}_{0}}_j(Y)\le 1},
$$
where $c$ is the same as in \eqref{slowly}.
Since $\psi_j$ is equal to $1$ on the support of $a_j$,  Lemma
\ref{lemma3.8} gives
$$
\nm {a_j}{s_p^w}^p\le C(\nm {a_j}{L^p}^p + I_j),
$$
where
\begin{align*}
I_j &\equiv \int _{\rr {2n}}\Bigl(\sup _{\scriptscriptstyle g_j^0
(Y)\le 1} |a_j|^{g_j^{{}_{0}}}_N(X+Y)\Bigr)^p dX
\\[1ex]
&\le
C_1\int _{\rr {2n}}\Bigl (\sup _{\scriptscriptstyle g_j^0 (Y)\le 1}
|a_j|^{g_j}_N(X+Y)h_{g_j}^{N/2}\Bigr )^p dX \\[1ex]
&\le
C_2(h^{N/2}_{g_j}m_j)^p|K_j|
\le C_3(h^{N/2}_{g_j}m_j)^p\Lambda
_{G_j}|U_j| ,
\end{align*}
for some constants $C$, $C_1$, $C_2$ and $C_3$ which are independent
of $j\in \mathbf N$.

\par

Since there is a bound of overlapping $U_j$, and the fact that
$\Lambda _{G_j}$ is $g$-continuous in view of Proposition
\ref{massinv}, it follows that
\begin{gather*}
\sum _{j\in \mathbf N} (m_jh^{N/2}_{g_j})^p\Lambda _{G_j}|U_j| \le
C\nm {\Lambda _{G}^{1/p}h^{N/2}_{g}m}{L^p}^p,
\intertext{and}
\sum _{j\in \mathbf N}\nm {a_j}{L^p}^p\le C\nm a{L^p}^p,
\end{gather*}
for some constant $C$. The result is now a consequence of the
estimate
\begin{equation}\label{agent008}
\nm a{s^w_p}^p\le C\sum _{j\in \mathbf N}\nm {\fy _ja}{s_p^w}^p
\end{equation}
(see Corollary 4.2 in \cite {To6}). The proof is complete.
\end{proof}

\par

%%%%%%%%%%%%%%%%%%%%%%%%%%%%%%%%%%%%%%%%%
\section{Consequences for a 
particular class of symbols}\label{sec3}
%%%%%%%%%%%%%%%%%%%%%%%%%%%%%%%%%%%%%%%%%

\par

In this section we apply the results from the previous sections on
pseudo-differential operators, where the symbols belong to a certain
types of symbol classes  which are defined in a similar way as $S^r_{\rho
,\delta }$ (cf. the introduction).

\par

For each $r,s\in \mathbf R$ and $\rho ,\delta \in \rr {2n}$, we
let $S^{r,s}_{\rho ,\delta}(\rr {2n})$ be the set of all $a\in
C^\infty (\rr {2n})$ such that
$$
\abp{\partial^\alpha_x \partial^\beta_\xi a(x,\xi)}\le 
C_{\alpha,\beta}\eabs {x}^{{s}-\scal {{ {\Pi}} _2\rho}\alpha + \scal {{{\Pi}} _2
\delta}\beta}
\eabs{\xi}^{r -\scal {{{\Pi}} _1\rho}\beta + \scal {{{\Pi}} _1\delta}\alpha },
$$
for some constants $C_{\alpha ,\beta}$ which are independent of $x$
and $\xi$. Here ${{\Pi}} _j\, : \, \rr {2n}\to \rr n$ are the projections
\begin{align*}
&{ {\Pi}} _1(\rho _1,\dots ,\rho _n,\rho _{n+1},\dots \rho _{2n})=(\rho
_1,\dots ,\rho _n)
\intertext{and}
&{{\Pi}} _2(\rho _1,\dots ,\rho _n,\rho _{n+1},\dots \rho _{2n})=(\rho
_{n+1},\dots ,\rho _{2n}).
\end{align*}
We note that if $r, \rho _0,\delta _0\in \mathbf R$,
\begin{equation*}
\rho _j=
\begin{cases} \rho _0\quad 1\le j\le n{\phantom{+1n2}}
\\[1ex]
0\quad n+1\le j\le 2n
\end{cases}
\quad
\text{and}
\quad
\delta _j=
\begin{cases}
\delta _0\quad 1\le j\le n
\\[1ex]
0\quad n+1\le j\le 2n,
\end{cases}
\end{equation*}
then $S^{r,s}_{\rho ,\delta}=S^r_{\rho _0,\delta _0}$.

\par

A simple computation shows that $S(m,g)=S^{r,s}_{\rho ,\delta}$
when
\begin{equation}
g_{x,\xi}(y,\eta)=\sum_{j=1}^n\eabs {x}^{-2\rho _{n+j}}\eabs
{\xi}^{2\delta _j} y_j^2+ \sum_{j=1}^n\eabs {x}^{2\delta _{n+j}}\eabs
{\xi}^{-2\rho_j} \eta _j^2,
\label{metric}
\end{equation}
and
\begin{equation}
m(x,\xi)=\eabs {x}^{s}\eabs{\xi}^{r},
\label{weight}
\end{equation}
Here we recall that $\eabs {x}=\bigl(1+\abp{x}^2\bigr)^{1/2}$.

\par

The proof of the following lemma is omitted, since the result follows
by similar arguments as in Section 18.4 in \cite {Ho3}. Here and in
what follows it is convenient to use the following convention. Assume
that $\mu ,\nu\in \rr n$ and that $r\in \mathbf R$. Then $\nu <\mu$
and $\nu \le \mu$ mean that $\nu_j<\mu_j$ and $\nu_j\le \mu_j$
respectively, for all $j=1,\ldots, n$. Moreover
\begin{alignat*}{5}
r&<\mu ,&\quad r &\le \mu ,&\quad \mu &<r &\quad &\text{and} &\quad \mu &\le r
\intertext{mean that}
r&<\mu _j,&\quad r &\le \mu _j,&\quad \mu _j&<r & \quad  &\text{and} &\quad \mu _j &\le r
\end{alignat*}
respectively, for all $j=1,\ldots,n$.

\par

\begin{lemma}\label{rhodeltafeasible}
Assume that 
$m$ and $g$ are given by \eqref{weight} and \eqref{metric},
respectively. Then $g$ is splitted,
$$
g^\sigma_{x,\xi}(y,\eta)=\sum_{j=1}^n\eabs {x}^{-2\delta _{n+j}}\eabs
{\xi}^{2\rho_j} y_j^2+ \sum_{j=1}^n\eabs {x}^{2\rho _{n+j}}\eabs
{\xi}^{-2\delta _j} \eta _j^2,
$$
and
$$
h_g(x,\xi)=\max_{1\le j\le n}\eabs {x}^{\delta _{n+j} - \rho
_{n+j}}\eabs {\xi}^{\delta _j - \rho _j}.
$$
Moreover,
\begin{enumerate}
\item if $\rho \le 1$ and $0\le \delta$, then $g$ is slowly varying;

\vrum

\item if $0\le \delta \le \rho \le 1$, then $g$ is feasible;

\vrum

\item if $0\le \delta \le \rho \le 1$ and $\delta<1$, then $g$ is
strongly feasible;

\vrum

\item if $\rho \le 1$ and $0\le \delta$, then $m$ is $g$-continuous.
\end{enumerate}
\end{lemma}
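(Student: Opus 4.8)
The plan is to read all four assertions directly off the explicit expressions \eqref{metric} and \eqref{weight}, using throughout that every coefficient that occurs is a monomial $\eabs{x}^{a}\eabs{\xi}^{b}$ whose exponents $a,b$ range over a fixed finite set of real numbers. That $g$ is splitted is immediate, since \eqref{metric} is a sum of pure $y_j^2$- and pure $\eta_j^2$-terms and hence satisfies $g_X(z,\zeta)=g_X(z,-\zeta)$, which is \eqref{gdiag}. To compute $g^\sigma$ I would regard $g_{x,\xi}$, in the coordinates of \eqref{metric}, as the diagonal quadratic form with coefficient $a_j=\eabs{x}^{-2\rho_{n+j}}\eabs{\xi}^{2\delta_j}$ in $y_j^2$ and $b_j=\eabs{x}^{2\delta_{n+j}}\eabs{\xi}^{-2\rho_j}$ in $\eta_j^2$; recalling the form \eqref{SympForm} of $\sigma$, which couples the $\eta$-block of one argument against the $y$-block of the other, the supremum in the definition of $g^\sigma$ is evaluated by the Cauchy--Schwarz inequality and yields the diagonal form with coefficient $b_j^{-1}$ in $y_j^2$ and $a_j^{-1}$ in $\eta_j^2$ (the exponents of \eqref{metric} inverted and transposed between the $y$- and $\eta$-blocks), which is exactly the stated expression. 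Finally $h_g(x,\xi)^2=\sup_{Z\neq 0}g_{x,\xi}(Z)/g^\sigma_{x,\xi}(Z)$ is the largest ratio of corresponding coefficients of these two diagonal forms, and for the index $j$ this ratio equals $a_jb_j=\eabs{x}^{2(\delta_{n+j}-\rho_{n+j})}\eabs{\xi}^{2(\delta_j-\rho_j)}$; the maximum over $j$ gives the claimed formula for $h_g$.

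For (1) and (4) I would first show that, for $c>0$ small enough, $g_X(Y-X)\le c$ forces $\eabs{y}$ to be comparable with $\eabs{x}$ and $\eabs{\eta}$ with $\eabs{\xi}$, with constants depending only on $c$. Indeed, $g_X(Y-X)\le c$ bounds $|y_j-x_j|$ by $\sqrt{c}\,\eabs{x}^{\rho_{n+j}}\eabs{\xi}^{-\delta_j}\le\sqrt{c}\,\eabs{x}$ (using $\rho_{n+j}\le 1$ and $\delta_j\ge 0$) and $|\eta_j-\xi_j|$ by $\sqrt{c}\,\eabs{x}^{-\delta_{n+j}}\eabs{\xi}^{\rho_j}\le\sqrt{c}\,\eabs{\xi}$ (using $\delta_{n+j}\ge 0$ and $\rho_j\le 1$), so shrinking $c$ pins the ratios $\eabs{y}/\eabs{x}$ and $\eabs{\eta}/\eabs{\xi}$ near $1$. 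Since the coefficients of $g$ in \eqref{metric} and the weight $m$ in \eqref{weight} are fixed monomials in $\eabs{x}$ and $\eabs{\xi}$, this comparability at once gives $C^{-1}g_Y\le g_X\le Cg_Y$ and $C^{-1}m(Y)\le m(X)\le Cm(Y)$, which are (1) and (4). Assertion (2) then follows immediately: when $0\le\delta\le\rho$ every factor in the formula for $h_g$ is at most $1$ (as $\eabs{x},\eabs{\xi}\ge 1$), so $h_g\le 1$ on $W$, and a slowly varying metric with $h_g\le 1$ is feasible.

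For (3) it remains to verify that $g$ is $\sigma$-temperate, i.{\,}e.\ that \eqref{e1.3} holds, and this is the step where the extra hypothesis $\delta<1$ enters; I expect it to be the only genuinely delicate point. Because $g_X$ and $g_Y$ are simultaneously diagonal in the standard coordinates, $g_Y(Z)/g_X(Z)$ is dominated, uniformly in $Z$, by a bounded power of $\max\bigl(\eabs{x}/\eabs{y},\ \eabs{y}/\eabs{x},\ \eabs{\xi}/\eabs{\eta},\ \eabs{\eta}/\eabs{\xi}\bigr)$ (a quantity which is always $\ge 1$), so it suffices to dominate this maximum by a fixed power of $1+g^\sigma_Y(X-Y)$. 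Writing $g^\sigma_Y(X-Y)=\sum_j\eabs{y}^{-2\delta_{n+j}}\eabs{\eta}^{2\rho_j}(x_j-y_j)^2+\sum_j\eabs{y}^{2\rho_{n+j}}\eabs{\eta}^{-2\delta_j}(\xi_j-\eta_j)^2$, one checks that whenever one of the four ratios exceeds a fixed threshold the relevant difference $|x_j-y_j|$ or $|\xi_j-\eta_j|$ is, for a suitable index $j$, of the size of the larger of $\eabs{x},\eabs{y}$ (respectively of $\eabs{\xi},\eabs{\eta}$); substituting this produces in the critical cases ``$\eabs{y}$ large'' and ``$\eabs{\eta}$ large'' the lower bounds $g^\sigma_Y(X-Y)\ge c\,\eabs{y}^{2(1-\delta_{n+j})}$ and $g^\sigma_Y(X-Y)\ge c\,\eabs{\eta}^{2(1-\delta_j)}$, which are effective positive powers exactly because $\delta<1$ — they collapse to the useless constant $1$ as soon as some $\delta$-exponent reaches $1$ — while the cases ``$\eabs{x}$ large'' and ``$\eabs{\xi}$ large'' are handled by the cruder bounds $\eabs{y}^{-2\delta_{n+j}}\eabs{x}^{2}\ge(\eabs{x}/\eabs{y})^{2}$ and $\eabs{\eta}^{-2\delta_j}\eabs{\xi}^{2}\ge(\eabs{\xi}/\eabs{\eta})^{2}$, which need only $\delta\le 1$. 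This establishes \eqref{e1.3} with $G=g$, so $g$ is strongly feasible, and all the remaining verifications amount to the same bookkeeping with powers of $\eabs{x}$ and $\eabs{\xi}$ as in Section~18.4 of \cite{Ho3}.
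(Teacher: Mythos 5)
Your proof is correct and is essentially the argument the paper has in mind: the paper omits the proof, referring to the standard bookkeeping of Section~18.4 in H{\"o}rmander, and your computations of $g^\sigma$ and $h_g$ via Cauchy--Schwarz, the comparability of $\eabs{y},\eabs{x}$ and $\eabs{\eta},\eabs{\xi}$ on small $g_X$-balls for (1), (2) and (4), and the case analysis using the effective positive power $2(1-\delta_j)>0$ for the $\sigma$-temperateness in (3) are exactly the right details to supply.
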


\par

The following result now follows from Theorem \ref{corthm13}
and Lemma \ref{rhodeltafeasible}.

\par

\begin{prop}\label{conseq2}
Assume that $t\in \mathbf R$, $p\in [1,\infty )$,
$r,s\in \mathbf R$, $\rho ,\delta \in \rr {2n}$ are such that
$0\le \delta \le \rho \le 1$ and $\delta<1$, and that $a\in
S^{r,s}_{\rho ,\delta }(\rr {2n})$. Then the following is true:
\begin{enumerate}
\item if either $r< -n/p$ or ${{\Pi}} _1\delta <{{\Pi}} _1\rho$, and
either $s< -n/p$ or ${{\Pi}} _2\delta<{{\Pi}} _2\rho$, then $a\in
s_{t,p}$ if and only if $a\in L^p(\rr {2n})$;

\vrum

\item if either $r\le 0$ or ${{\Pi}} _1\delta <{{\Pi}} _1\rho$, and either
$s\le 0$ or ${{\Pi}} _2\delta < {{\Pi}} _2\rho$, then $a\in s_{t,\infty }$ if
and only if $a\in L^\infty  (\rr {2n})$;

\vrum

\item if either $r<0$ or ${{\Pi}} _1\delta <{{\Pi}} _1\rho$, and either
$s<0$ or ${{\Pi}} _2\delta < {{\Pi}} _2\rho$, then $a\in s_{t,\sharp}$ if
and only if $a\in L^\infty_0 (\rr {2n})$.
\end{enumerate}
\end{prop}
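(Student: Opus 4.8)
The plan is to obtain all three equivalences as a direct application of Theorem~\ref{corthm13} to the pair $(m,g)$ given by \eqref{weight} and \eqref{metric}; the only real work is to check the structural hypotheses of that theorem and then, in each case, to exhibit an integer $N\ge 0$ for which the appropriate integrability or smallness of $h_g^{N/2}m$ holds.

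First I would record that the hypotheses of Theorem~\ref{corthm13} are met. Since $0\le\delta\le\rho\le1$ and $\delta<1$, Lemma~\ref{rhodeltafeasible} gives that $g$ is strongly feasible and splitted and that $m$ is $g$-continuous; that $m$ is also $(\sigma,g)$-temperate is a routine computation from the explicit expressions for $m$ and $g^\sigma$ in Lemma~\ref{rhodeltafeasible}, both being of polynomial type in $\eabs x$ and $\eabs\xi$. Recalling that $S(m,g)=S^{r,s}_{\rho,\delta}(\rr{2n})$ for this choice of $m$ and $g$, Theorem~\ref{corthm13} then reduces assertion (1) to finding $N\ge0$ with $h_g^{N/2}m\in L^p(\rr{2n})$, assertion (2) (the case $p=\infty$ of Theorem~\ref{corthm13}(1)) to finding $N\ge0$ with $h_g^{N/2}m\in L^\infty(\rr{2n})$, and assertion (3) (Theorem~\ref{corthm13}(2)) to finding $N\ge0$ with $h_g^{N/2}m\in L^\infty_0(\rr{2n})$.

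Next I would extract the decay of $h_g$. By Lemma~\ref{rhodeltafeasible},
$$
h_g(x,\xi)=\max_{1\le j\le n}\eabs x^{\,\delta_{n+j}-\rho_{n+j}}\eabs\xi^{\,\delta_j-\rho_j},
$$
and, since $\delta\le\rho$, every exponent here is $\le0$; bounding the $\eabs x$-factors, respectively the $\eabs\xi$-factors, by $1$ yields $h_g(x,\xi)\le\eabs\xi^{-c_1}$ and $h_g(x,\xi)\le\eabs x^{-c_2}$, where $c_1=\min_{1\le j\le n}(\rho_j-\delta_j)\ge0$ and $c_2=\min_{1\le j\le n}(\rho_{n+j}-\delta_{n+j})\ge0$, with $c_1>0$ exactly when $\Pi_1\delta<\Pi_1\rho$ and $c_2>0$ exactly when $\Pi_2\delta<\Pi_2\rho$. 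Taking the geometric mean of the two bounds, for every $N\ge0$
$$
h_g(x,\xi)^{N/2}m(x,\xi)\le\eabs x^{\,s-c_2N/4}\,\eabs\xi^{\,r-c_1N/4}.
$$

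Finally I would conclude case by case, using the elementary facts that $\eabs x^{\mu}\eabs\xi^{\nu}$ lies in $L^p(\rr{2n})$ iff $\mu<-n/p$ and $\nu<-n/p$, in $L^\infty(\rr{2n})$ iff $\mu\le0$ and $\nu\le0$, and in $L^\infty_0(\rr{2n})$ iff $\mu<0$ and $\nu<0$. For (1): under its hypothesis one chooses a single $N$ so large that $s-c_2N/4<-n/p$ (immediate if $s<-n/p$; otherwise $c_2>0$ and $N$ large suffices) and simultaneously $r-c_1N/4<-n/p$, whence $h_g^{N/2}m\in L^p$; cases (2) and (3) are handled identically, replacing ``$<-n/p$'' by ``$\le0$'', respectively ``$<0$''. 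The step that needs the most care is exactly this uniform choice of $N$ combined with the geometric-mean splitting: it is what lets all the decay be produced by $h_g^{N/2}$ alone, distributed between the two groups of variables, in the case where only the metric conditions $\Pi_j\delta<\Pi_j\rho$ hold and no size condition on $r$ or $s$ is available. The only other point not already isolated in a lemma above is the $(\sigma,g)$-temperance of $m$, which is routine.
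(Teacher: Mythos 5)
Your proof is correct and takes essentially the same route as the paper, which simply cites Theorem~\ref{corthm13} and Lemma~\ref{rhodeltafeasible} without spelling out the intermediate computation. Your filling-in of the details — in particular the geometric-mean splitting $h_g\le\eabs x^{-c_2/2}\eabs\xi^{-c_1/2}$ to distribute decay between the $x$- and $\xi$-variables, and the observation that Lemma~\ref{rhodeltafeasible} does not itself assert that $m$ is $(\sigma,g)$-temperate (a hypothesis required by Theorem~\ref{corthm13} and verifiable by Peetre's inequality together with $\delta<1$) — is a faithful expansion of the argument the paper leaves implicit.
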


\par

Next we focus on the case when $g$ in \eqref{metric} is not
necessarily feasible and illustrate the differences between Theorem
\ref{thm3.9} and Proposition \ref{thm1}{\,}(1). Set
$$
n_p = [2n(1/p-1/2)].
$$

\par

The following result is an immediate consequence of Proposition
\ref{thm1}  and  Lemma \ref{rhodeltafeasible}.

\par

\begin{prop}\label{prop11}
Assume that $1\le p\le 2$, $\rho ,\delta \in \rr {2n}$ are such that $\rho
\le 1$, $0\le \delta$ and $\rho\le \delta$, and that
$a\in S^{r,s}_{\rho ,\delta }$. Also assume that
\begin{equation}\label{first}
\begin{cases}
r<-n-p(n_p+1/2)\max\limits_{1\le j\le n}(\delta_j-\rho_j),
\\[2ex]
s<-n-p(n_p+1/2)\max\limits_{1\le j\le n}(\delta_{n+j}-\rho _{n+j}),
\end{cases}
\end{equation}
and that $a\in L^p(\rr {2n})$. Then $a\in s_p^w$.
\end{prop}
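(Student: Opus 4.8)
The plan is to deduce the statement directly from Proposition~\ref{thm1}(1) together with Lemma~\ref{rhodeltafeasible}; the only real content is checking that $h_g^{N/2}m$ is $p$-integrable for a suitable $N$.

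First I would pass to the $S(m,g)$ framework. With $g$ and $m$ as in \eqref{metric} and \eqref{weight}, the computation recalled just before Lemma~\ref{rhodeltafeasible} gives $S^{r,s}_{\rho,\delta}=S(m,g)$, so $a\in S(m,g)\subseteq S_N(m,g)$ for every $N\ge 0$. Since $\rho\le 1$ and $0\le\delta$, Lemma~\ref{rhodeltafeasible}(1) says that $g$ is slowly varying, which is exactly the hypothesis of Proposition~\ref{thm1}(1) in the range $p\in[1,2]$, and the same lemma gives
$$
h_g(x,\xi)=\max_{1\le j\le n}\eabs x^{\delta_{n+j}-\rho_{n+j}}\eabs\xi^{\delta_j-\rho_j}.
$$
Writing $M_1=\max_{1\le j\le n}(\delta_j-\rho_j)$ and $M_2=\max_{1\le j\le n}(\delta_{n+j}-\rho_{n+j})$, which are nonnegative because $\rho\le\delta$, and using $\eabs x,\eabs\xi\ge 1$, I get the crude bound $h_g(x,\xi)\le\eabs x^{M_2}\eabs\xi^{M_1}$.

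Next I would take $N=\kappa_p$ (so that $N\ge\kappa_p$ and $N/2=n_p+1/2$) and estimate
$$
h_g(x,\xi)^{N/2}m(x,\xi)\le\eabs x^{\,s+(n_p+1/2)M_2}\,\eabs\xi^{\,r+(n_p+1/2)M_1}.
$$
By Fubini, a product of powers $\eabs x^{A}\eabs\xi^{B}$ lies in $L^p(\rr{2n})$ precisely when $pA<-n$ and $pB<-n$. From \eqref{first} and $M_1,M_2\ge 0$ one sees that $r<0$ and $s<0$, hence $pr\le r$ and $ps\le s$; combining with \eqref{first} gives
$$
p\bigl(r+(n_p+1/2)M_1\bigr)\le r+p(n_p+1/2)M_1<-n,
$$
and likewise $p\bigl(s+(n_p+1/2)M_2\bigr)<-n$. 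Therefore $h_g^{N/2}m\in L^p(\rr{2n})$.

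Finally, since $a\in S_N(m,g)\cap L^p(\rr{2n})$ by the above and by hypothesis, Proposition~\ref{thm1}(1) yields $a\in s_p^w$, which is the claim. I do not expect any genuine obstacle: the argument is pure exponent bookkeeping, and the only slightly delicate point is the use of $p\ge 1$ together with $r,s<0$ to absorb the factor $p$ occurring in \eqref{first}, which is exactly what makes the clean hypothesis \eqref{first} sufficient rather than the sharper inequalities $p\bigl(r+(n_p+1/2)M_1\bigr)<-n$ and $p\bigl(s+(n_p+1/2)M_2\bigr)<-n$.
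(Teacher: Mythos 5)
Your proof is correct and takes exactly the route the paper intends: Proposition~\ref{prop11} is stated in the paper to be ``an immediate consequence of Proposition \ref{thm1} and Lemma \ref{rhodeltafeasible},'' and you have supplied precisely the exponent-bookkeeping that makes that consequence immediate, with the key observations being $h_g\le\eabs x^{M_2}\eabs\xi^{M_1}$ (using $\rho\le\delta$ and $\eabs x,\eabs\xi\ge1$) and the absorption $pr\le r$, $ps\le s$ valid because $r,s<0$ and $p\ge1$. The only cosmetic nit is that the parenthetical ``$N/2=n_p+1/2$'' fails at $p=2$ (where $\kappa_2=0$, so $N/2=0$), but this is harmless since $h_g\ge1$ makes $h_g^{N/2}m$ only smaller, and your displayed bound remains valid; you could simply take $N=2n_p+1$ for all $p\in[1,2]$ to avoid the case split.
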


\par

In order to apply Theorem \ref{thm3.9} we need to analyze $\Lambda_G$
(cf.\ \eqref{varthetadef}) with $G=g+g^{\scriptscriptstyle 0}$.
The symplectic transformation
$$
\begin{cases}
y_j=\eabs{x}^{\delta _{n+}j+\rho _{n+j}}\eabs{\xi}^{-\delta_j-\rho_j}z_j,\\
\eta _j=\eabs{x}^{-\delta _{n+j}-\rho _{n+j}}\eabs{\xi}^{\delta_j+\rho_j}\zeta_j,
\end{cases}
$$
with $j=1,\ldots,n$, puts $G$ in diagonal form
$$
G_{x,\xi}(z,\zeta)=\sum_{j=1}^n
\Bigl(\eabs{x}^{\delta _{n+j}-\rho _{n+j}}\eabs{\xi}^{\delta _j-\rho _j}+1\Bigr)
(z_j^2+\zeta_j^2),
$$
so that
\begin{equation}\label{autobus}
\Lambda_G(x,\xi)=\prod _{j=1}^n\Bigl(\eabs {x}^{\delta _{n+j}-\rho
_{n+j}}\eabs{\xi}^{\delta_j-\rho_j}+1\Bigr).
\end{equation}

\par

The following result is
an immediate consequence of Theorem \ref{thm3.9}, Lemma
\ref{rhodeltafeasible} and \eqref{autobus}.

\par

\begin{prop}\label{prop12}
Assume that $1\le p\le 2$, $\rho ,\delta \in \rr {2n}$ are such that  $\rho \le 1$, $0\le \delta$
and $\rho\le \delta$, and that $a\in S^{r,s}_{\rho ,\delta }$. Also assume that
\begin{equation}\label{second}
\begin{cases}
r<-n-p(n_p+1)\max\limits_{1\le j\le
n}(\delta_j-\rho_j)/2-\sum\limits_{1\le j\le n}(\delta_j-\rho_j),
\\[1ex]
s<-n-p(n_p+1)\max\limits_{1\le j\le
n}(\delta _{n+j}-\rho _{n+j})/2-\sum\limits_{1\le j\le n}(\delta _{n+j}-\rho _{n+j}),
\end{cases}
\end{equation}
and that $a\in L^p(\rr {2n})$. Then $a\in s_p^w$.
\end{prop}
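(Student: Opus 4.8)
The plan is to derive Proposition \ref{prop12} directly from Theorem \ref{thm3.9}, by checking its hypotheses for the weight $m$ and metric $g$ given by \eqref{weight} and \eqref{metric}, for which $S(m,g)=S^{r,s}_{\rho ,\delta }$. First I would record the structural assumptions: since $\rho \le 1$ and $0\le \delta$, Lemma \ref{rhodeltafeasible}{\,}(1) and (4) give that $g$ is slowly varying on $W$ and that $m$ is $g$-continuous. Also $a\in S^{r,s}_{\rho ,\delta }=S(m,g)\subseteq S_N(m,g)$ for every $N$, and $a\in L^p(\rr {2n})$ is assumed. Thus the whole matter reduces to verifying the single integrability condition $\Lambda _G^{1/p}h_g^{N/2}m\in L^p(\rr {2n})$ with $G=g+g^{{}_0}$ and $N$ taken to be the integer $n_p+1$ (which satisfies $n_p+1\ge \kappa _p'$ for all $p\in [1,2]$, with equality when $p<2$).

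The next step is the exponent bookkeeping. By Lemma \ref{rhodeltafeasible}, $h_g(x,\xi)=\max _{1\le j\le n}\eabs x^{\delta _{n+j}-\rho _{n+j}}\eabs \xi ^{\delta _j-\rho _j}$, and since $\rho \le \delta$ all the exponents $\delta _j-\rho _j$ and $\delta _{n+j}-\rho _{n+j}$ are non-negative while $\eabs x,\eabs \xi \ge 1$; hence $h_g(x,\xi)\le \eabs x^{\beta _0}\eabs \xi ^{\alpha _0}$, where $\alpha _0=\max _{1\le j\le n}(\delta _j-\rho _j)$ and $\beta _0=\max _{1\le j\le n}(\delta _{n+j}-\rho _{n+j})$. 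Likewise \eqref{autobus} yields $\Lambda _G(x,\xi)\le 2^n\eabs x^{\sigma _0}\eabs \xi ^{\tau _0}$ with $\tau _0=\sum _{1\le j\le n}(\delta _j-\rho _j)$ and $\sigma _0=\sum _{1\le j\le n}(\delta _{n+j}-\rho _{n+j})$. Combining these bounds with $m(x,\xi)=\eabs x^s\eabs \xi ^r$ gives a pointwise estimate $\bigl(\Lambda _G^{1/p}h_g^{N/2}m\bigr)^p\le C\eabs x^{ps+pN\beta _0/2+\sigma _0}\eabs \xi ^{pr+pN\alpha _0/2+\tau _0}$. With $N=n_p+1$, the hypothesis \eqref{second}, together with $p\ge 1$ (which gives $-n\le -n/p$, $-p(n_p+1)\alpha _0/2\le -(n_p+1)\alpha _0/2$ and $-\tau _0\le -\tau _0/p$, and similarly in the $x$-variables), forces $pr+pN\alpha _0/2+\tau _0<-n$ and $ps+pN\beta _0/2+\sigma _0<-n$. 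Since $\int _{\rr n}\eabs x^\gamma \,dx<\infty$ exactly when $\gamma <-n$, the displayed $p$-th power is integrable over $\rr {2n}$, i.e. $\Lambda _G^{1/p}h_g^{N/2}m\in L^p$.

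Having checked all hypotheses, Theorem \ref{thm3.9} then gives $a\in S_N(m,g)\cap L^p(\rr {2n})\subseteq s_p^w(\rr {2n})$, which is the assertion. I do not expect a genuine obstacle here: the only point requiring a little care is the elementary exponent arithmetic above, in particular using $p\ge 1$ to absorb the discrepancies between \eqref{second} and the exact integrability threshold. Conceptually the one thing worth flagging is that $\rho \le \delta$ makes $h_g\ge 1$, so $g$ is in general \emph{not} feasible — which is precisely why Theorem \ref{thm3.9}, rather than Lemma \ref{rhodeltafeasible}{\,}(3) and Theorem \ref{corthm12}, is the appropriate tool in this regime.
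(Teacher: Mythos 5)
Your argument is correct and coincides with the paper's own (one-line) proof, which likewise derives the result directly from Theorem \ref{thm3.9}, Lemma \ref{rhodeltafeasible} and \eqref{autobus}; the exponent bookkeeping you carry out (with $N=n_p+1\ge\kappa_p'$ and the use of $p\ge 1$ to absorb the gap between \eqref{second} and the exact threshold $pr+pN\alpha_0/2+\tau_0<-n$, $ps+pN\beta_0/2+\sigma_0<-n$) is exactly what the paper leaves to the reader. Your closing remark that $\rho\le\delta$ forces $h_g\ge1$, so $g$ is not feasible and Theorem \ref{corthm12} is unavailable, correctly identifies why Theorem \ref{thm3.9} is the right tool here.
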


\par

\begin{rem}
We note that Propositions \ref{prop11} and \ref{prop12} do not
contain each others. Consequently, Proposition \ref{thm1}{\,}(1) and
Theorem \ref{thm3.9} do not contain each others as well. In fact, as
simple examples show, the conditions imposed on $r$ and $s$ in
\eqref{first} can be stronger, weaker or not comparable with those in
\eqref{second}.
\end{rem}

\par

%%%%%%%%%%%%%%%%%%%%%%%%%%%%%
\section*{Appendix A.}
%%%%%%%%%%%%%%%%%%%%%%%%%%%%%

\par

In this appendix we consider some of the key estimates of $L^p$-type in Sections \ref{sec2} and \ref{sec4} again, and prove that they can be obtained by using techniques as in \cite{BuTo0}.

\par

We start to consider $n$-sectors in $\rr n$. For each $n$-sector $H$ in $\rr n$, there is a  linear and bijective $T=T_H$ on $\rr n$ such that
$$
H=H_T = \sets {T(x)}{x_j\ge 0,\ j=1,\dots ,n}.
$$
We note that different $T$ might give raise to the same sector, and hence, $T_H$ is not unique. On the other hand, if $A_T$ is the matrix for the linear map $T$, then
$$
\Upsilon (H)\equiv |\det (T_H)|
$$
is independent of the choice of $T_H$.

\par

For any set $\Omega \subseteq \rr n$, $\ep \ge 0$ and $n$-sector $H$, we set
$$
\Omega _{H,\ep } \equiv \sets {x+y}{x\in \Omega ,\ y\in H\cap B_\ep
(0)}.
$$

\par

The following lemma is, to some extent, a stressed version of Theorem
4.10 in Chapter 5 in \cite{BS}. 

\par

\renewcommand{\rubrik}{Lemma A.1}

\par

\begin{tom}
Assume that $\Omega \subseteq \rr n$ is a closed convex set, $N\ge 0$
is an integer, $H$ an $n$-sector, $\ep >0$, and that $p\in
[1,\infty ]$. Then there is a constant $C$, depending on $n$, $N$,
$\ep$ and $\Upsilon (H)$ only such that
\begin{equation}\tag*{(A.1)}
\nm {\partial ^\alpha f}{L^p(\Omega )}\le C\Big (\nm f{L^p(\Omega _{H, \ep })}+\sum _{|\beta |=N} \nm {\partial ^\beta f} {L^p(\Omega _{H, \ep })}\Big ),
\end{equation}
when $|\alpha |\le N$ and $f\in C^N(\Omega _{H,\ep })$.
\end{tom}

\par

For the proof we recall some facts on difference
operators and B-splines in Section 5.4 in \cite {BS}.

\par

Let $\chi _{(0,1)}$ be the characteristic function for the interval
$(0,1)$. Then the function $H_j$, for $j\ge 1$, defined inductively on
the real line by
$$
H_1=\chi _{(0,1)},\quad H_{j+1}=H_1*H_j,\quad j\ge 1,
$$
is called the B-spline of order $j$.

\par

For any $h\in \rr
n$, let $\{ T^j_h \} _{j\ge 1}$ be a sequence of operators on $C(\rr n)$
which is inductively defined by
$$
T^1_hf(x)=f(x+h)-f(x),\quad T^{j+1}_h = T^j _h\circ T^1_h
$$
when $f\in C(\rr n)$. An important relation for the B-splines and the
operator $T^j_h$ when $n=1$ is the relation
\begin{equation}\tag*{(A.2)}
T^j_hf(x) = \int f^{(j)}(x+th)h^jH_j(t)\, dt.
\end{equation}

\par

\begin{proof}[Proof of Lemma A.1.] 
We shall mainly follow the proof of Theorem
4.10 in Chapter 5 in \cite{BS}. We may assume that $\ep \le 1$. Furthermore, by making a change of variables, we may assume that
$$
H=\sets {x\in \rr n}{x_j\ge 0,\ j=1,\dots ,n} .
$$

\par

The result is obviously true when $|\alpha |=0$ and $|\alpha |=N$. We
may therefore assume that $0<|\alpha|<N$. First we consider the case
$n=1$. Since the support of $H_j$ is equal to $[0,j]$ and that the
integral of $H_j$ is equal to $1$, the mean-value theorem and (A.2) give
\begin{equation}\tag*{(A.3)}
\begin{aligned}
T_{\ep /N^2}^jf(x) &= \ep ^jN^{-2j}\int f^{(j)}(x+\ep t/N^2)H_j(t)\,
dt
\\[1ex]
&= \ep ^jN^{-2j} f^{(j)}(x+\theta ),
\end{aligned}
\end{equation}
for some $0\le \theta \le \ep /N$. Furthermore,
$$
f^{(j)}(x)=f^{(j)}(x+\theta ) -\int _0^\theta f^{(j+1)}(x+y)\, dy,
$$
and combining this equality with (A.3) gives
$$
f^{(j)}(x) = \ep ^{-j}N^{2j}T^j_{\ep /N^2}f(x) - \int _0^\theta
f^{(j+1)}(x+y)\, dy.
$$
By applying the $L^p(\Omega )$ norm on the latter equality, and using
the fact that
$$
\Big |\int _0^\theta f^{(j+1)}(x+y)\, dy \Big | \le \Big (\int _0^\theta
|f^{(j+1)}(x+y)|^p\, dy \Big )^{1/p},
$$
by H{\"o}lder's inequality, we get
\begin{equation*}
\nm {f^{(j)}}{L^{p}(\Omega )} \le (2N^2/\ep )^j\nm f{L^p
(\Omega _{H, j\ep /N^2})} + \ep \nm {f^{(j+1)}}{L^{p}(\Omega _{H,
\ep /N})}/N.
\end{equation*}
Iteration of this result gives (A.1).

\par

Next assume that $n\ge 1$ is arbitrary. From the first part of the
proof we get
\begin{equation}\tag*{(A.4)}
\nm {\partial ^j_{k}f}{L^p(\Omega )}\le C(\nm
{f}{L^p (\Omega _{H, \ep /n})}+\nm {\partial ^N_kf}{L^p
(\Omega _{H, \ep /n})}).
\end{equation}
For any arbitrary multi-index $\alpha$ we also let $g_k=
\partial _k^{\alpha _k}\cdots \partial _N^{\alpha _N}f$ and
$r_k=r-\sum _{i=k+1}^{n}\alpha _i$. From (A.4) we get
$$
\nm {g_k}{L^{p}(\Omega _{H, (k-1)\ep /n})} \le C (\nm
{g_{k+1}}{L^{p}(\Omega _{H, k\ep /n})} +|f|_{W^p _N(\Omega
_{H, k\ep /n})}).
$$
This gives
\begin{multline*}
\nm {\partial ^\alpha f}{L^p (\Omega )} = \nm {g_1}{L^p
(\Omega )} \le C_1 (\nm {g_{2}}{L^{p}(\Omega _{H, \ep /n})} +|f|_{W^p
_N(\Omega _{H, \ep /n})})\le \cdots
\\[1ex]
\le C_{n-1}(\nm {g_{n}}{L^{p}(\Omega _{H, (n-1)\ep /n})}
+|f|_{W^p _N(\Omega _{H, (n-1)\ep /n})})
\\[1ex]
\le C_n(\nm f{L^p
(\Omega _{H, \ep })} + |f|_{W^p _N(\Omega _{H, \ep})}),
\end{multline*}
for some constants $C_k$ which only depend on $\ep$, $n$ and $N$. The
proof is complete.
\end{proof}

\par

Next we apply Lemma A.1 to a family of subsets of $\rr n$ which contains each convex sets. A subset $\Omega$ of $\rr n$ is called \emph{conistic} (of order $\ep >0$) if for each $x\in \Omega$, there is an $n$-sector $H$ in $\rr n$ such that
\begin{equation}\tag*{(A.5)}
\Upsilon (H)\ge \ep\quad \text{and}\quad x+(H\cap B_\ep (0))\subseteq \Omega .
\end{equation}
By straight-forward computations it follows that any convex set is
conistic. Consequently, since the euclidean structure in Lemma
\ref{lemma:1} is completely determined by the euclidean metric $g_X$
(note here that $X$ is fixed), Lemma \ref{lemma:1} is a consequence of the
following result.

\medspace

\renewcommand{\rubrik}{Proposition A.2}

\par

\begin{tom}
Assume that $\Omega \subseteq
\rr n$ is bounded and conistic of order $\ep >0$, and that $N\in
\mathbf N$. Then there exists a positive constant $C$, depending on
$n$, $N$ and $\ep$ only such that
$$
\nm {\partial ^\alpha f}{L^\infty (\Omega )}\le C\big ( \nm f{L^\infty
(\Omega )} + \sum _{|\beta |=N}\nm {\partial ^\beta f}{L^\infty
(\Omega )} \big ),\quad |\alpha |\le N,\ f\in C^N(\rr n).
$$
\end{tom}

\medspace

\begin{proof}
We may assume that $\Omega$ is a closed set. Let $x_0\in \Omega$  be chosen such that
$$
|\partial ^\alpha f(x_0)| = \nm {\partial ^\alpha f}{L^\infty (\Omega )},
$$
and let the sector $H$ be chosen such that (A.5) is fulfilled for $x=x_0$.
If $\omega =x_0+(H\cap B_{\ep /2}(0))$, then it follows that
$$
\omega _{H,\ep /2} = x_0 + (H\cap B_{\ep }(0))\subseteq \Omega .
$$
Hence Lemma A.1 gives
\begin{multline*}
\nm {\partial ^\alpha f}{L^\infty (\Omega )}\ = |\partial ^\alpha
f(x_0)| = \nm {\partial ^\alpha f}{L^\infty (\omega )}\
\\[1ex]
\le C\Big ( \nm f{L^\infty (\omega _{H,\ep /2})} + \sum _{|\beta
|=N}\nm {\partial ^\beta f}{L^\infty (\omega _{H,\ep /2})} \Big )
\\[1ex]
C\Big ( \nm f{L^\infty (\Omega )} + \sum _{|\beta |=N}\nm {\partial
^\beta f}{L^\infty (\Omega )} \Big ),
\end{multline*}
when $f\in C^N$, and the result follows.
\end{proof}

\vspace{0.5cm}

\end{document}